\documentclass[11pt]{amsart}
\usepackage{amssymb,latexsym,amsfonts}
\usepackage{amsmath}
\usepackage{amscd}
\usepackage{pb-diagram,pb-xy}
\usepackage{tikz}
\usetikzlibrary{shapes,arrows}
\usepackage{graphicx}
\usepackage{hyperref}
\usepackage{color}
\usepackage[matrix,arrow]{xy}

\usepackage{fullpage}
\textheight=9.2truein



\newcommand{\str}{{\star}}

\newcommand{\Weit}{{\mathbf {Weit}}}

\newcommand{\Ker}{{\rm Ker}}
\newcommand{\cc}{{\mathbf {c}} }
\newcommand{\tD}{{\widetilde {D}} }
\newcommand{\td}{{\tilde {\delta}} }

\newcommand{\Ric}{{\rm Ric}}

\DeclareMathAlphabet{\mathpzc}{OT1}{pzc}{m}{it}

\def\Min{\, \mathbf {min}}

\newtheorem{theorem}{Theorem}[section]
\newtheorem{corollary}[theorem]{Corollary}

\newtheorem{lemma}[theorem]{Lemma}

\newtheorem{proposition}[theorem]{Proposition}
\newtheorem*{thm-a}{Theorem\! A}
\newtheorem*{thm-aa}{Theorem\! ${\rm A}^{\prime}$}
\newtheorem*{cor-bb}{Corollary\! ${\rm B}^{\prime}$}
\newtheorem*{thm-cc}{Theorem\! ${\rm C}^{\prime}$}
\newtheorem*{cor-a}{Corollary\! A}
\newtheorem*{thm-b}{Theorem\! B}
\newtheorem*{cor-b}{Corollary\! B}
\newtheorem*{thm-c}{Theorem\! C}
\newtheorem*{cor-c}{Corollary\! C}
\newtheorem*{thm-d}{Theorem\! D}
\newtheorem*{thm-dd}{Theorem\! ${\rm D}^{\prime}$}
\newtheorem*{cor-dd}{Corollary\! ${\rm D}^{\prime}$}
\newtheorem*{cor-d}{Corollary\! D}
\newtheorem*{thm-e}{Theorem\! E}
\newtheorem*{cor-e}{Corollary\! E}
\newtheorem*{thm-f}{Theorem\! F}
\newtheorem*{cor-f}{Corollary\! F}

\newtheorem*{conj-d}{Conjecture D}
\newtheorem*{conj-c}{Conjecture C}
\newtheorem{thm}{Theorem}[section]
\theoremstyle{definition}
\newtheorem{definition}[thm]{Definition}

\newtheorem*{remark}{Remark}

\linespread{1.3}
\parindent7mm
\parskip=2mm


\begin{document}
\author{Mohammed Larbi Labbi} 
\address{Department of Mathematics\\
 College of Science\\
  University of Bahrain\\
  32038, Bahrain.}
\email{mlabbi@uob.edu.bh}
\renewcommand{\subjclassname}{
  \textup{2020} Mathematics Subject Classification}
\subjclass[2020]{Primary: 53B20, 53C20, 53C21. Secondary: 58A14, 58C99}

\title{Hodge-de Rham and Lichn\'erowicz Laplacians on double forms and some vanishing theorems}  \maketitle
 \begin{abstract}
A $(p,q)$-double form on a Riemannian manifold $(M,g)$ can be considered simultaneously as a vector-valued differential $p$-form over $M$ or alternatively as a vector-valued $q$-form. Accordingly, the usual Hodge-de Rham Laplacian on differential forms can be extended to  double forms in two ways. The differential operators obtained in this way are denoted by $\Delta$ and $\widetilde{\Delta}$.\\
In this paper, we show that the Lichn\'erowicz Laplacian $\Delta_L$ once operating on double forms, is nothing but the average of the two operators mentioned above. We introduce a new  product on double forms to establish index-free formulas for the curvature terms in the Weitzenb\"ock formulas corresponding to the Laplacians $\Delta, \widetilde{\Delta}$ and $\Delta_L$. We prove vanishing theorems for the Hodge-de Rham Laplacian $\Delta$  on $(p,0)$ double forms and for $\Delta_L$ and $\Delta$ on symmetric double forms of arbitrary order. These results generalize recent results by Petersen-Wink. Our vanishing theorems  reveal  the impact of the role played by the rank of the eigenvectors of the curvature operator on the structure (e.g. the topology) of the manifold.

\end{abstract}

\keywords{Keywords: double forms, positive curvature, vanishing theorems, Hodge-de Rham Laplacian,  Lichn\'erowicz Laplacian, Weitzenb\"ock formula}
\tableofcontents

\section{Summary of the main results}\label{intro}
A $(p,q)$-double form $\omega$ on a Riemannian manifold $(M,g)$ is a covariant tensor of rank $p+q$ which is alternating with respect to the first $p$ and the last $q$-arguments. The double form $\omega$ can be regarded  as a vector valued  differential $p$-form over $M$ or alternatively as a vector valued  differential $p$-form over $M$. Accordingly,  the exterior derivative $d$, divergence $\delta$ and  Hodge de Rham Laplacian $\Delta$  of differential forms can be extended in two ways  to  operators that act on double forms. These extended operators are denoted  $D,\delta, \Delta$ and $\tD, \td, \widetilde{\Delta}$ respectively. The operators thus obtained on double forms were first introduced and studied by several authors, see e.g.   Nomizu \cite{Nomizu}, Gray\cite{Gray}, Kulkarni \cite{Kulkarni}, Bourguignon \cite{Bourguignon}, Nasu \cite{Nasu} Ogawa \cite{Ogawa}, ...\\
In \cite{Bourguignon}, Bourguignon has established two index-free formulations of the curvature term in Weitzenb\"ock's formula for the Hodge de Rham Laplacian $\Delta =D\delta+\delta D$ on $(1,1)$ and $(2,2)$ double forms.  A first result in this paper is a generalization of Bourguignon's two formulas to general double forms. For a $(p,q)$ double form $\omega$ we show that
\begin{equation}\label{pqWeit}
(D\delta+\delta D)\omega=\nabla^*\nabla+\omega \circ \Bigl( \frac{g^{p-1}\Ric}{(p-1)!}-2 \frac{g^{p-2}R}{(p-2!} \Bigr)-\frac{1}{4} R\# \omega,
\end{equation}
where $p,q\geq 0$, $g^0=1$ and $g^k=0$ if $k$ is negative. The binary operation $\circ$ denotes the composition product of double forms, the symbol free products are exterior products of double forms. The sharp operation $\#$ is a new binary operation on double forms which we  introduce here in this paper using the Clifford product of forms. It  is in fact a  generalization of the sharp product of $(2,2)$ double forms that appears in the evolution equation of the Riemann curvature tensor under the Ricci flow, see  \cite{BW} and \cite{Shmidt}.\\
For the case $q=0$, the term $R\# \omega$ vanishes. The curvature term in the above equation \ref{pqWeit} provides an index-free  formula for the curvature term in the Weitzenb\"ock formula for differential forms which was first proved in \cite{Bourguignon, Labbi-nachr}.\\
In this paper we prove similar formulas  for the adjoint de Rham Laplacian $ \widetilde{\Delta}=\tD\td+\td\tD$ and the Lichn\'erowicz Laplacian $\Delta_L$ both acting on general $(p,q)$ double forms.
We show that the Laplacian
 $\Delta_L$ acting on double forms coincides with the average of the two Laplacians $\Delta$ and $\widetilde{\Delta}$. More precisely, we prove that
\begin{equation}
\Delta_L=\frac{1}{2}\bigl( \Delta +\tilde{\Delta}\bigr)=\frac{1}{2}\bigl( D\delta+\delta D+\tilde{D}\tilde{\delta}+\tilde{\delta}\tilde{D}.\bigr)
\end{equation}

Weitzenb\"ock's formulas show that the principal part of the operators $\Delta, \tilde{\Delta}$ and $\Delta_L$ is the connection Laplacian $\nabla^*\nabla$, hence they are elliptic operators. Therefore they have finite-dimensional kernels if $M$ is a compact manifold. We call the kernel of $\Delta$ (resp. $\tilde{\Delta}$, $\Delta_L$) the space of harmonic (resp. $\tilde{\Delta}$-harmonic, $\Delta_L$-harmonic)  double forms.\\
It turns out that   the kernels of $\Delta$, $\tilde{\Delta}$ and $\Delta_L$ all coincide on symmetric or skew-symmetric double forms on a compact manifold. In general one has only $\Ker(\Delta_L)\subset \Ker(\Delta)$, since for example, a harmonic $(p,0)$ double form is  $\Delta_L$-harmonic   if and only if it is parallel.\\
In the second part, we prove several vanishing theorems  for  the kernels of $\Delta$ and $\Delta_L$.\\
To illustrate our approach, we first recall the classical Weitzenb\"ock formula for a differential $1$-form $\omega$
\[\Delta \omega=\nabla^*\nabla\omega+\Ric(\omega).\]
Therefore, the positivity of the Ricci curvature implies the vanishing of all harmonic $1$-forms. The Ricci curvature is related to the eigenvalues and eigenvectors of the Riemann curvature operator through the following nice formula, see formula (\ref{2-identiries}) in the next section.
\begin{equation}\label{RicRiem}
\Ric=-\sum_{\alpha} \lambda_{\alpha}\, \rho(E_{\alpha})\circ \rho(E_{\alpha}),
\end{equation}
where, $(\lambda_{\alpha})$ are the eigenvalues of the Riemann curvature operator and $(E_{\alpha})$ is an orthonormal basis of its  eigenvectors at the point under consideration. The  mapping $\rho$ is linear and sends $2$-forms onto $(1,1)$ skew symmetric double forms. By definition,  we define  $\rho(e^1\wedge e^2)=e^1\otimes e^2-e^2\otimes e^1$ for  co-vectors $e^1$ and $e^2$.  This is the inverse of the alt operator. The operation $\circ$ is the composition product of double forms.\\
We note that the bilinear form $-\rho(E_{\alpha})\circ \rho(E_{\alpha})$ is non-negative for all $\alpha$. The following  algebraic key lemma is a 
reformulation of an inequality of Petersen-Wink \cite{Pet-Wink}  
\begin{lemma}\label{first-lemma}
Let $\lambda_1\leq \lambda_2\leq ...\leq \lambda_n$ and $w_1, ...,w_n$ be given real numbers such that $w_i\geq 0$ for all $i$ and $M=\max\{w_1,...,w_n\}>0$. Let $S=\sum_{i=1}^nw_i$, then
\[ \lfloor \frac{S}{M}\rfloor \sum_{i=1}^n\lambda_iw_i\geq S\sum_{i=1}^{\lfloor \frac{S}{M}\rfloor}\lambda_i.\]
\end{lemma}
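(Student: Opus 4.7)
Set $k=\lfloor S/M\rfloor$. The hypothesis $M>0$ and $w_i\leq M$ give $M\leq S\leq nM$, hence $1\leq k\leq n$. Rewriting the desired inequality by dividing by $k$, it suffices to show
\[
\sum_{i=1}^{n}\lambda_i w_i \;\geq\; \frac{S}{k}\sum_{i=1}^{k}\lambda_i.
\]
The plan is to view the left-hand side as a linear functional in $w=(w_1,\dots,w_n)$ on the polytope
\[
P \;=\; \Bigl\{\,w\in\R^n : 0\leq w_i\leq M,\;\;\sum_{i=1}^{n} w_i = S\,\Bigr\},
\]
minimize over $P$, and check the resulting explicit inequality.

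First I would identify the minimizer. Because $\lambda_1\leq\cdots\leq\lambda_n$, a standard exchange argument (or linear programming at a vertex of $P$) shows that $\sum\lambda_i w_i$ is minimized at the configuration
\[
w_1=\cdots=w_k=M,\qquad w_{k+1}=S-kM,\qquad w_{k+2}=\cdots=w_n=0,
\]
which lies in $P$ since $0\leq S-kM<M$. Indeed, if some admissible $w$ has $w_j>0$ for a $j$ with $\lambda_j$ large and $w_i<M$ for a smaller index $i$ with $\lambda_i\leq\lambda_j$, transferring mass from $w_j$ to $w_i$ weakly decreases $\sum\lambda_i w_i$ while remaining in $P$. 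Consequently
\[
\sum_{i=1}^{n}\lambda_i w_i \;\geq\; M\sum_{i=1}^{k}\lambda_i + (S-kM)\,\lambda_{k+1}.
\]

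Next I would close the estimate. Since $\lambda_i\leq\lambda_{k+1}$ for $i\leq k$, we have $\frac{1}{k}\sum_{i=1}^{k}\lambda_i\leq\lambda_{k+1}$, and because $S-kM\geq 0$,
\[
(S-kM)\,\lambda_{k+1}\;\geq\;\frac{S-kM}{k}\sum_{i=1}^{k}\lambda_i.
\]
Adding $M\sum_{i=1}^{k}\lambda_i$ to both sides collapses the right-hand side to $\frac{S}{k}\sum_{i=1}^{k}\lambda_i$, which is the required bound; multiplying by $k$ recovers the statement of the lemma.

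Finally I would dispose of the boundary case $k=n$. This forces $S=nM$ and hence $w_i=M$ for every $i$, so both sides of the claimed inequality equal $nM\sum_{i=1}^{n}\lambda_i$ and the statement holds with equality (avoiding the non-existent $\lambda_{n+1}$). The main potential pitfall is this edge case together with the sign of $S-kM$: one must remember that $\lambda_{k+1}$ may itself be negative, which is exactly why the positivity of $S-kM$ (not of $\lambda_{k+1}$) is what the averaging step needs, and why the ordering hypothesis on $\lambda_i$ cannot be dropped.
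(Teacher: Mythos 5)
Your proof is correct and is essentially the same as the paper's: both arguments reduce to the bound $\sum_{i}\lambda_i w_i \geq M\sum_{i=1}^{k}\lambda_i + (S-kM)\lambda_{k+1}$ (you via a greedy/LP minimizer over the polytope, the paper via the direct term-by-term estimates $\lambda_i\geq\lambda_{k+1}$ for $i>k$ and $w_i\leq M$ for $i\leq k$) and then close with the identical averaging step using $S-kM\geq 0$. Your explicit treatment of the boundary case $k=n$, where $\lambda_{k+1}$ does not exist, is a small point the paper's proof silently skips.
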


In the above case of Ricci curvature, we have $S$ equals  the Ricci curvature of the standard sphere, which is $n-1$ and $M=1$.
Therefore the Ricci curvature is positive under the condition that the  sum of the lowest $(n-1)$ eigenvalues of the Riemann tensor is positive.\\

We will apply a similar analysis to the general case of double forms. Since the metric $g$ is fixed on $M$, we will identify whenever convenient spaces and their duals, and operators with the corresponding bilinear forms. Let $E$ be a given $2$-form and $\rho(E)$ the corresponding $(1,1)$ double form as above. First, we extend $\rho(E)$ by derivations to a double form on the diagonal exterior algebra of double forms and then we extend by derivations the corresponding operator on forms  to operate on the tensor product algebra that is on the exterior algebra of double forms. See section \ref{extensions-section} for more details about these extensions. We shall denote by 
 $\Bigl(\rho(E)\Bigr)_d$ the  extension thus obtained. This is nothing else than the standard representation of the Lie algebra $\mathfrak{so}(n)$  in the exterior algebra of double forms.  We prove that for a given $(p,q)$ double form $\omega$ one has
\begin{equation}
\Bigl(\rho(E)\Bigr)_d(\omega)=\frac{g^{q-1}\rho(E)}{(q-1)!}\circ \omega -\omega \circ \frac{g^{p-1}\rho(E)}{(p-1)!}.
\end{equation}
\medskip\noindent
The curvature term $\Ric_L$ defining the Lichn\'erowicz Laplacian was first defined by Lichn\'erowicz in  \cite[page 27]{Lichn}. 
It can  alternatively be defined by (using the same notation as in the above formula (\ref{RicRiem}) for the Ricci curvature), see  \cite[page 54]{Besse} and \cite[page 345]{Petersen-book}
\begin{equation}
\Ric_L(\omega)=-\sum_{\alpha} \lambda_{\alpha}\,\, \Bigl(\rho(E_{\alpha})\Bigr)_d\circ \Bigl(\rho(E_{\alpha})\Bigr)_d(\omega).
\end{equation}
We recover the Ricci curvature if $\omega$ is a $(1,0)$ double form, that is a $1$-form. Recall that for $p$-forms seen as $(p,0)$ double forms, the Hodge de Rham Laplacian satisfies
\[\Delta \omega=\nabla^*\nabla\omega+\Ric_L(\omega).\]
For a general $(p,q)$ double form $\omega$, the Lichn\'erowicz Laplacian is defined by
\[\Delta_L \omega=\nabla^*\nabla\omega+\frac{1}{2}\Ric_L(\omega).\]
Therefore, the study of the positivity of $\Ric_L$ is important since it provides in particular vanishing theorems for the kernel of $\Delta$ on differential forms  and the kernel of $\Delta_L$ on differential double forms.\\
\medskip
We begin our study of the Lichn\'erowicz curvature term  by proving the following (universal) reformulation of $\Ric_L$ in terms of the above three operations on double forms
\begin{equation}
\Ric_L(\omega)=\frac{1}{2}\Bigl(\omega \circ \mathcal{N}_p+\mathcal{N}_q\circ \omega\Bigr) -\frac{1}{2}R\# \omega,
\end{equation}
where $\omega$ is a $(p,q)$ double form, $\mathcal{N}_k=\frac{g^{k-1}\Ric}{(k-1)!}-2 \frac{g^{k-2}R}{(k-2)!}$ for $k\geq 2$ and $\mathcal{N}_1=\Ric, \mathcal{N}_0=0. $\\
For $q=0$ one has $R\# \omega=0$ and then we recover the formula for the Weitzenb\"ock curvature operators  as in \cite{Bourguignon, Labbi-nachr}.\\
\medskip
\noindent
Recall that the rank of a 2-form $\alpha$ is the minimum number of co-vectors  in terms of which $\alpha$ can be expressed. We use the above lemma \ref{first-lemma} and some analysis to prove the following important proposition which provides sufficient conditions that guarantee the positivity of the $\Ric_L$-curvature

\begin{proposition}
Let $x\in M$ be fixed and let $(E_{\alpha})$ be an orthonormal basis of $2$-forms  diagonalizing the Riemann curvature operator at $x$. We assume that  for each  $\alpha$, the rank of $E_{\alpha}$ is $\leq 2r$, for some fixed integer $r$.
\begin{itemize}
\item[a)] Let $\omega$ be a $(p,0)$ double form with $p\geq 1$. If the sum of the lowest $\lfloor \frac{p(n-p)}{\Min\{p,r\}}\rfloor$ eigenvalues (counted with multiplicity) of the Riemann curvature  tensor at $x$ is positive (resp. non-negative) then 
$\langle \Ric_L(\omega),\omega\rangle >0$  (resp.  $\geq 0$).

\item[b)]  Let $\omega$ be a symmetric and trace free $(p,p)$ double form that satisfies the first Bianchi identity with $p\geq 1$. If the sum of the lowest $\lfloor \frac{p(n-p+1)}{2\Min\{p,r\}}\rfloor$ eigenvalues (counted with multiplicity) of the Riemann curvature  tensor at $x$ is positive (resp. non-negative) then 
$\langle \Ric_L(\omega),\omega\rangle >0$  (resp.  $\geq 0$).
\end{itemize}
\end{proposition}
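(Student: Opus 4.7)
The plan is to reduce both statements to Lemma \ref{first-lemma}. Starting from the expression
$$\Ric_L(\omega)=-\sum_\alpha \lambda_\alpha\, \bigl(\rho(E_\alpha)\bigr)_d\circ \bigl(\rho(E_\alpha)\bigr)_d(\omega)$$
and using that each $\rho(E_\alpha)$, being a skew-symmetric $(1,1)$-double form, produces a skew-adjoint derivation $\bigl(\rho(E_\alpha)\bigr)_d$ (the induced $\mathfrak{so}(n)$-action on $(p,q)$-double forms), I obtain
$$\langle \Ric_L(\omega),\omega\rangle=\sum_\alpha \lambda_\alpha w_\alpha,\qquad w_\alpha:=\bigl\|\bigl(\rho(E_\alpha)\bigr)_d\omega\bigr\|^2\ge 0.$$
Setting $S=\sum_\alpha w_\alpha$ and $M=\max_\alpha w_\alpha$, Lemma \ref{first-lemma} gives $\sum_\alpha\lambda_\alpha w_\alpha\ge (S/\lfloor S/M\rfloor)\sum_{i=1}^{\lfloor S/M\rfloor}\lambda_i$. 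Hence it suffices to lower bound $S/M$ by the quantity inside the floor appearing in each part: the hypothesis that the lowest $N_0$ eigenvalues sum to a positive (resp.\ non-negative) number propagates to the lowest $N$ for any $N\ge N_0$, because then $\lambda_{N_0}$ is at least the positive (resp.\ non-negative) average of the first $N_0$, forcing all subsequent eigenvalues to be of the same sign.

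For part (a), $\omega\in\Lambda^p$ and, by orthonormality of $\{E_\alpha\}$, the operator $-\sum_\alpha \bigl(\rho(E_\alpha)\bigr)_d^2$ is the quadratic Casimir of the standard $\mathfrak{so}(n)$-action on $\Lambda^p$, which acts as the scalar $p(n-p)$; thus $S=p(n-p)\|\omega\|^2$. For $M$, I decompose a unit $2$-form $E$ of rank $\le 2r$ as $E=\sum_{i=1}^r c_i E_i$ with $\sum c_i^2=1$ and the $E_i$ simple rank-$2$ forms supported on mutually orthogonal planes. Then the $(\rho(E_i))_d$ pairwise commute on $\Lambda^p$, simultaneously diagonalise with eigenvalues in $\{0,\pm i\}$, and on any common eigenvector at most $\Min\{p,n-p,r\}$ of them are nonzero (exactly one index from each contributing pair must appear in the basic $p$-form). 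Hodge duality lets me assume $p\le n-p$, and Cauchy--Schwarz on $\sum c_j\varepsilon_j$ with $\varepsilon_j\in\{-1,0,1\}$ then yields $\bigl\|(\rho(E))_d\omega\bigr\|^2\le \Min\{p,r\}\|\omega\|^2$, so $S/M\ge p(n-p)/\Min\{p,r\}$.

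For part (b), the subspace of symmetric, trace-free $(p,p)$-double forms satisfying the first Bianchi identity is an irreducible $SO(n)$-representation (Young diagram $(p,p)$) on which the quadratic Casimir acts by the scalar $p(n-p+1)$, either by a standard highest-weight calculation or by direct verification on a model tensor, so $S=p(n-p+1)\|\omega\|^2$. The induced action here is $(\rho(E))_d=A\otimes I+I\otimes A$ with $A=(\rho(E))_d$ acting on $\Lambda^p$; combining the bound $\|A\|_{\mathrm{op}}^2\le \Min\{p,r\}$ from part (a) with the simultaneous diagonalisation of the two commuting summands, and exploiting the symmetry of $\omega$ so that the coherent eigenvalue $2c_j\varepsilon_j$ appears on a symmetrised common eigen-basis, one obtains $M\le 2\Min\{p,r\}\|\omega\|^2$, giving $S/M\ge p(n-p+1)/(2\Min\{p,r\})$.

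The main technical obstacle is the sharp rank-dependent bound for $M$. In part (b), the delicate steps are the Casimir computation on the irreducible subrepresentation of algebraic curvature-type tensors, and the use of the symmetry of $\omega$ to replace the naive bound $4\Min\{p,r\}$ (from two independent copies of the $\Lambda^p$-bound on $A\otimes I+I\otimes A$) by the improved $2\Min\{p,r\}$; the trace-free and Bianchi conditions are what place $\omega$ in the irreducible component where both the Casimir value and the coherent eigenvalue structure are available.
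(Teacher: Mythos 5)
Your part (a) is correct and is essentially the paper's own argument: the paper likewise writes $\langle \Ric_L(\omega),\omega\rangle=\sum_\alpha\lambda_\alpha w_\alpha$ with $w_\alpha=\bigl\|\bigl(\rho(E_\alpha)\bigr)_d\omega\bigr\|^2\geq 0$, proves $\sum_\alpha w_\alpha=p(n-p)\|\omega\|^2$ (via the Greub--Vanstone identity and the trace identities rather than a Casimir computation) and $w_\alpha\leq \Min\{p,r\}\|\omega\|^2$ (via exactly your eigenvalue analysis of the derivation extension of $\rho(E_\alpha)$ together with the Cauchy--Schwarz/Newton estimate on $\pm\lambda_{i_1}\pm\cdots\pm\lambda_{i_k}$), and then feeds $S/M$ into the weighted-sum lemma; your monotonicity remark about passing from $N_0$-positivity to $N$-positivity for $N\geq N_0$ is also needed and correct.

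In part (b), however, your two intermediate constants are each wrong by a factor of $2$, and the step you single out as the delicate one is false. With your stated normalization $w_\alpha=\bigl\|\bigl(\rho(E_\alpha)\bigr)_d\omega\bigr\|^2$, the Casimir value on symmetric trace-free $(p,p)$ double forms satisfying the first Bianchi identity is $2p(n-p+1)$, not $p(n-p+1)$: check $p=1$, where on trace-free symmetric $h$ one has $-\sum_{i<j}(\rho_{ij})_d^2\,h=2n\,h$, or compute $\langle\lambda,\lambda+2\rho\rangle$ for $\lambda=(2,\dots,2)$ in the normalization that gives $n-1$ on the standard representation. Correspondingly, the sharp per-$\alpha$ bound is $w_\alpha\leq 4\Min\{p,r\}\|\omega\|^2$, and the improvement to $2\Min\{p,r\}$ that you attribute to the symmetry of $\omega$ does not hold: for $p=r=1$, $E=e^1\wedge e^2$ and $h=e^1\otimes e^1-e^2\otimes e^2$ one computes $\bigl(\rho(E)\bigr)_d h=2(e^1\otimes e^2+e^2\otimes e^1)$, so $\bigl\|\bigl(\rho(E)\bigr)_d h\bigr\|^2=4\|h\|^2$, saturating $4\Min\{p,r\}$ and violating $2\Min\{p,r\}$. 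Your two errors cancel in the ratio $S/M$, so the stated floor $\lfloor\frac{p(n-p+1)}{2\Min\{p,r\}}\rfloor$ survives, but as written the proof rests on a false inequality presented as essential; the correct and simpler route is to take $S=2p(n-p+1)\|\omega\|^2$ together with the naive bound $M\leq 4\Min\{p,r\}\|\omega\|^2$, which already gives the required ratio with no symmetry-based refinement. This is in substance what the paper does: it identifies $-\frac{1}{2}\sum_\alpha\bigl(\rho(E_\alpha)\bigr)_d^2$ with the Weitzenb\"ock term of the unit sphere, namely $\frac{p(n-p)+q(n-q)+p+q}{2}\omega-g\,\cc\,\omega-\frac{1}{2}\bigl(\widetilde{\mathfrak S}\mathfrak S+\mathfrak S\widetilde{\mathfrak S}\bigr)\omega$, which reduces to $p(n-p+1)\omega$ under your hypotheses, and bounds the per-$\alpha$ eigenvalues by $\bigl((\pm\lambda_{i_1}\pm\cdots)+(\pm\lambda_{j_1}\pm\cdots)\bigr)^2\leq 2(k+l)\leq 2\Min\{p,r\}+2\Min\{q,r\}$; no appeal to irreducibility is needed, only the vanishing of $\cc\,\omega$, $\mathfrak S\omega$ and $\widetilde{\mathfrak S}\omega$.
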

Before we state the main theorems, we need a definition
\begin{definition}
\begin{itemize}
\item[a)] We say that the Riemann curvature tensor $R$  of a Riemannian $n$-manifold has {\emph purity rank} $\leq 2r$, for some $r$ with $2\leq 2r\leq n$,  if for every $m\in M$,  there exists  an orthonormal basis of eigenvectors of the curvature operator $R$ each of which has rank $\leq 2r$.
\item[b)] We say that the Riemann curvature tensor $R$ is $k$-positive (resp. $k$-non-negative) if the sum of the lowest $k$ eigenvalues of the associated curvature operator (counted with multiplicity) is positive (resp. non-negative).
\end{itemize}
\end{definition}
The Riemann tensor has purity rank $\leq 2$ if and only if $R$ is pure in the sense of Maillot \cite{Maillot-a, Maillot-t}. The standard round sphere $S^n$, $n\geq 2$, has purity rank $\leq 2$ and the complex (resp. quaternionic) projective space $\mathbb{C}P^n$ (resp. $\mathbb{H}P^n$)  with the Fubiny-Study metric has purity rank $\leq 4$ (resp. $\leq 8$) for $n\geq 2$. We note that the property of having purity rank $\leq 2k$, for  fixed $k$, is stable under Riemannian products.

\begin{thm-a}
Let $(M,g)$ be a closed and connected Riemannian manifold of dimension $n\geq 3$ and $p$ be a positive integer such that $2p\leq n$. Suppose the Riemann curvature tensor $R$ has purity rank $\leq 2r$ and $R$ is $k$-positive (resp. $k$-nonnegative) with
 $1\leq k\leq   \frac{p(n-p)}{\Min\{p,r\}}$,
then the Betti numbers $b_p$ and $b_{n-p}$ of $M$ vanish (resp. every harmonic $p$-form on $M$ is parallel).
\end{thm-a}
Note that in the case of minimal purity where $r=1$, the minimum of $p$ and $r$ is $1$ and we therefore  recover a result of Maillot \cite{Maillot-a, Maillot-t}, namely, the vanishing of $b_p$ under the condition that $R$ is $p(n-p)$-positive. On the other hand, we note that $n-p\leq \frac{p(n-p)}{\Min\{p,r\}}$, so we obtain the vanishing of $b_p$ under the condition that $R$ is $(n-p)$-positive.
We recover therefore a recent result of  Petersen-Wink \cite[Theorem A]{Pet-Wink} see also \cite[Theorem 3.15]{Bet-Good}.\\
\medskip
\begin{thm-b}
Let $(M,g)$ be a closed and connected Riemannian manifold of dimension $n\geq 3$ and let $\omega$ be a harmonic $(p,p)$ symmetric double form on $M$ with $1\leq p\leq n/2$,  satisfying the first Bianchi identity. 
 Let $j$ be an integer such that $1\leq j\leq p$.
 If the Riemann curvature tensor $R$ is $\lfloor \frac{n-j+1}{2}\rfloor$-positive then  $\cc^{p-j}\omega =\lambda g^j$ for some constant $\lambda$. In particular,
\begin{itemize}
\item[a)] If $R$ is $\lfloor \frac{n-p+1}{2}\rfloor$-positive then  $\omega$ has constant sectional curvature.
\item[b)] If $R$ is $\lfloor \frac{n}{2}\rfloor$-positive then  $\cc^{p-1}\omega =\lambda g$ for some constant $\lambda$.
\end{itemize}
\end{thm-b}
In the case where $p=2$ and $\omega=R$ is the Riemann tensor,  we recover the main result of Tachibana in \cite{Tachibana}. For $p=1$, we recover a well known result that a harmonic symmetric $(1,1)$ double form on a Riemannian manifold with positive curvature operator (indeed
 ${\rm sec}>0$ suffices) must be proportional to the metric tensor\\
A consequence of the above theorem is that a Riemannian $n$-manifold with  harmonic curvature and $\lfloor \frac{n}{2}\rfloor$-positive curvature tensor must be Einstein. More generally, if for some $1\leq k \leq n/2$,  the Gauss-Kronecker tensor $R^k$  is harmonic and the Riemann tensor is $\lfloor \frac{n-2k+1}{2}\rfloor$-positive (resp. $\lfloor \frac{n}{2}\rfloor$-positive) then the Riemannian manifold must be with constant Thorpe $2k$-sectional curvature (resp.  $2k$-Einstein in the sense of \cite{Labbi-variational}).\\
The following classes of Riemannian manifolds have their Gauss-Kronecker tensor $R^k$ harmonic and to which the above theorem applies.
\begin{enumerate}
\item $2k$-Thorpe manifolds \cite{Labbi-Balkan}, see also \cite{Kim, Labbi-JAUMS}. These are Riemannian manifolds of even dimension $n=2p\geq 4k$ that satisfy the self-duality condition
$\str\bigl(g^{p-2k}R^k\bigr)=g^{p-2k}R^k.$
These are generalizations of Einstein manifolds obtained for $k=1$
\item $2k$-anti-Thorpe manifolds. These are Riemannian manifolds of even dimension $n=2p\geq 4k$ that satisfy the  condition
$\str\bigl(g^{p-2k}R^k\bigr)+g^{p-2k}R^k=\lambda g^p,$
where $\lambda$ is a constant. These are generalizations of conformally flat manifolds with constant scalar curvature obtained for $k=1$, see \cite[Theorem 5.8]{Labbi-double-forms}.
\end{enumerate}

\medskip

\begin{thm-c}
Let $(M,g)$ be a closed and connected Riemannian manifold of dimension $n\geq 3$. Let $\omega$ be a harmonic $(p,p)$ symmetric double form on $M$ with $2\leq p\leq n/2$,  satisfying the first and second Bianchi identities such that its first contraction satisfies $\cc \omega=\lambda g^{p-1}$ for some function $\lambda$ on $M$.
Suppose the Riemann curvature tensor $R$ has purity rank $\leq 2r$ and $R$ is $k$-positive (resp. $k$-nonnegative) with
$1\leq k\leq \frac{p(n-p+1)}{2\Min\{p,r\}}$, then the double form $\omega$ has constant sectional curvature (resp. $\omega$ must be parallel). 
\end{thm-c}

\begin{remark}
We remark that a $(p,p)$ double form such that $2p>n$ and satisfying $\cc \omega=\lambda g^{p-1}$ must be with constant sectional curvature without the positivity and purity assumptions on $R$, see Propositions 2.1 and 2.2 in \cite{Labbi-Balkan}.
\end{remark}
\noindent
Recall that a Riemannian manifold is said to be hyper $2k$-Einstein if its Riemann curvature tensor seen as a $(2,2)$ double form  satisfies $\cc R^k= \lambda g^{2k-1}$, see \cite{Labbi-Balkan}. We recover the usual Einstein manifolds when $k=1$. The above theorem shows that 

\begin{cor-c}
Let $(M,g)$ be a closed and connected hyper $2k$-Einstein Riemannian $n$-manifold with $2\leq 2k\leq n/2$. Suppose the Riemann curvature tensor $R$ has purity rank $\leq 2r$ and let  $p$ be any integer such that $1\leq p\leq \frac{k(n-2k+1)}{\Min\{2k,r\}}$. 
If $R$ is $p$-positive (resp. $p$-nonnegative) then $M$ has constant $(2k)$-th Thorpe sectional curvature, that is $R^k={\mathrm constant}.g^{2k}$ (resp. $R^k$ must be parallel). 
\end{cor-c}
We remark that $\frac{n-2k+1}{2}\leq \frac{k(n-2k+1)}{\Min\{2k,r\}}$ for any value of $r$, therefore we have the same conclusion as in the above corollary under the assumption that the sum of the lowest $\lfloor \frac{n-2k+1}{2}\rfloor$ eigenvalues of the curvature operator $R$ is positive  (resp. nonnegative).
For $k=1$, we recover  a recent result of Petersen and Wink \cite[Theorem B]{Pet-Wink}.\\
\subsection*{Plan of the paper}
Section 2 is about preparatory materials concerning several algebraic properties of double forms. In section 3, we use the composition and exterior products of double forms to express different extensions by derivations of a vector space operator to an operator acting on the exterior algebras of forms and double forms. In section 4, we introduce the new sharp $\#$ operation on double forms and study some of its properties. In section 5, we study several differential properties of  double forms and use them to prove useful properties of the Hodge-de Rham Laplacian $\Delta$ on double forms. We use the $\#$ product, to establish a general index-free formula for the curvature  term in the Weitzenb\"ock formula for $\Delta$. Section 5 contains as well the proof of Theorem A. The proofs of Theorems B and C are included in section 6. In this last section we prove useful properties of the Lichn\'erowicz Laplacian $\Delta_L$ acting on double forms.

\section{Double forms: algebraic properties}

Let $(V,g)$ be an Euclidean real vector space  of finite dimension $n$.  Let
  $\Lambda V^{*}=\bigoplus_{p\geq 0}\Lambda^{p}V^{*}$   denotes the exterior algebra
 of the dual space  $V^* $.  \emph{The space of exterior  double forms} of $V$   is defined as
 $${\mathcal D}V^*:= \Lambda V^{*}\otimes \Lambda V^{*}=\bigoplus_{p,q\geq 0}
  {\mathcal D}^{p,q}V^*,$$
 where $  {\mathcal D}^{p,q}V^*= \Lambda^{p}V^{*} \otimes  \Lambda^{q}V^{*}.$
The space ${\mathcal D}V^*$ is naturally  a bi-graded associative  algebra, called \emph{double exterior algebra of $V^*$},
 where for simple elements  $\omega_1=\theta_1\otimes \theta_2\in { \mathcal D}^{p,q}V^*$ and
 $\omega_2=\theta_3\otimes \theta_4\in  {\mathcal D}^{r,s}V^*$, the multiplication is given  by
 \begin{equation}
 \label{def:prod}
 \omega_1.\omega_2= (\theta_1\otimes \theta_2 )(\theta_3\otimes
 \theta_4)=
 (\theta_1\wedge \theta_3 )\otimes(\theta_2\wedge \theta_4)\in
    {\mathcal D}^{p+r,q+s}V^*.\end{equation}
The wedge symbol $\wedge$ denotes as usual the exterior product in the exterior algebra $ \Lambda V^{*}$.
 The above multiplication in ${\mathcal D}V^*$  will be referred to as the {\em exterior product of double forms} and is denoted by a dot. We usually omit  this dot symbol if there will be no confusion. 
\subsection{Inner and interior products of double forms}
Let $\langle ., .\rangle$ denotes  the  standard inner product on $ \Lambda V^{*}$ and ${\mathcal D} V^{*}=\Lambda V^{*}\otimes \Lambda V^{*}$ both inherited from the Euclidean structure $(V,g)$ in the following standard way: If $(e_i)$ is an orthonormal basis of $V$ and $(e^i)$ is its dual basis, we declare that $\{e^{i_1}\wedge...\wedge e^{i_k}:i_1<...<i_k\}$ is an orthonormal basis of   $ \Lambda^k V^{*}$ and $\{e^{i_1}\wedge...\wedge e^{i_p}\otimes e^{j_1}\wedge...\wedge e^{j_q}:i_1<...<i_p, j_1<...<j_q\}$ is an orthonormal basis of   ${\mathcal D}^{p,q} V^{*}$. We set as well that the subspaces $ \Lambda^k V^{*}$ and $ \Lambda^l V^{*}$ are orthogonal if $k\not=l$ and the subspaces ${\mathcal D}^{p,q} V^{*}$, ${\mathcal D}^{r,s} V^{*}$ are orthogonal if $p\not= r$ or $q\not= s$. In particular, if $g^p$ denotes the $p$th exterior power of the $g$ in ${\mathcal D} V^{*}$ then  one has
$$\langle g^p, g^p\rangle=\frac{n!p!}{(n-p)!}\,\, {\rm and}\,\,\, \langle g^p, g^r\rangle=0\,\, {\rm if}\,\, p\not= r.$$
Next, for $\alpha\in \Lambda V^{*} $, {\em the interior product} map $\iota_{\alpha}:\Lambda V^{*}\to \Lambda V^{*}$ is by definition the adjoint of the left multiplication map by $\alpha$ that is
$\mu_{\alpha}: \Lambda V^{*}\to \Lambda V^{*}$ defined by $\mu_{\alpha}(\omega)=\alpha\wedge\omega$.\\
In a similar way, For a double form $\alpha\in {\mathcal D}V^*$,  we define {\em the interior product} map $\iota_{\alpha}: {\mathcal D}V^*\to {\mathcal D}V^*$ to be the adjoint of the left multiplication map by $\alpha$ that is
$\mu_{\alpha}: {\mathcal D} V^{*}\to {\mathcal D} V^{*}$ such that $\mu_{\alpha}(\omega)=\alpha.\omega$.\\
It is not difficult to verify that for simple double forms $\alpha=\alpha_1\otimes \alpha_2$ and $\omega=\omega_1\otimes \omega_2$ one has
\begin{equation}
\iota_{\alpha_1\otimes \alpha_2}=\iota_{\alpha_1}\omega_1\otimes \iota_{\alpha_2}\omega_2.
\end{equation}
For $\alpha=g$ seen as a $(1,1)$ double form, it turns out that the interior product $\iota_g\omega$ is nothing but the contraction  ${\cc}\omega$ of  the double form $\omega$.\\
We note that for a vector $v\in V$ and for the corresponding 1-form  $v^{\flat}=g(v,.)\in V^*$, one has for any $\omega\in \Lambda V^*$ the following
\begin{equation}
\iota_{(v^\flat)}\omega=\iota_v\omega=\omega(v, .).
\end{equation}
That is to say that the interior product $\iota_\alpha\omega$ coincides with the usual (insertion) interior product. The same is true for the interior product of double form, for $v_1,v_2\in V$ and for any $\omega=\omega_1\otimes \omega_2 \in {\mathcal {D}} V^*$ one has 
\begin{equation}
\iota_{(v_1^\flat\otimes v_2^\flat)}\omega=\iota_{(v_1\otimes v_2)}\omega=\iota_{v_1}\omega_1\otimes \iota_{v_2}\omega_2.
\end{equation}
For this reason we freely use both forms for the interior product in this paper. Especially, when we use an orthonormal  basis $(e_i)$ of $V$ and its dual basis $(e^i)$ as in the two formulas below, we shall most of the times in this paper write $\iota_{e_i}$ instead of $\iota_{e^i}$ and write $\iota_{e_i\otimes e_j}$ instead of $\iota_{e^i\otimes e^j}$. The following identities, whose proof is straightforward,  are stated here for future reference and are valid for any double form $\omega$
\begin{equation}\label{2identities}
\begin{split}
\iota_{e_i\otimes 1}\Bigl[ (e^j\otimes 1).\omega\Bigr]&=\delta_{ij}\omega-(e^j\otimes 1).\iota_{e_i\otimes 1}\omega,\\
\iota_{1\otimes e_i}\Bigl[ (1 \otimes e^j).\omega\Bigr]&=\delta_{ij}\omega-(1 \otimes e^j).\iota_{1\otimes  e_i}\omega,\\
\cc\bigl((e^i\otimes 1).\omega\bigr)&=\iota_{1\otimes e_i}\omega-(e^i\otimes 1).\cc \omega.
\end{split}
\end{equation}
Recall that $\cc=\iota_g$ is the contraction map of double forms.
\subsection{Hodge star operator acting on double forms}
Suppose that an orientation is fixed on $V$ and choose a positive unit vector $e\in\Lambda^nV^{*}$. Recall that for $\alpha\in \Lambda V^{*}$, the Hodge star operator is an isomorphism $\str: \Lambda V^{*}\to \Lambda V^{*}$  which can be defined using the inner product by
\[ \str \alpha =\iota_{\alpha}e.\]
It follows from the above definition that if $\beta\in \Lambda^qV^{*}$ and $\alpha\in \Lambda^pV^{*}$ then one has
\[\str(\beta\wedge\alpha)=(-1)^{pq}\str(\alpha\wedge\beta)=(-1)^{pq}\iota_{\alpha\wedge\beta}e=
(-1)^{pq}\iota_{\beta}\circ \iota_{\alpha}e=(-1)^{pq}\iota_{\beta}(\str \alpha).\]
After applying the Hodge star operator to both sides, we get the following useful identity
\begin{equation}\label{identity1}
\beta\wedge \str \alpha=(-1)^{q(p+1)}\str \iota_{\beta}\alpha.
\end{equation}
The vector $e\otimes e$ is a unit vector in  ${\mathcal D} V^{*}$ and doesn't depend on the chosen orientation on $V$, in fact, one can show without difficulties that $e\otimes e=\frac{g^n}{n!}$, where $g^n$ denotes the n-th exterior power of $g$ (seen as a $(1,1)$ double form in ${\mathcal D} V^{*}$.\\
In a similar way, we define the (double) Hodge star operator acting on double forms as the isomorphism $\str: {\mathcal D} V^{*}\to {\mathcal D} V^{*}$ defined by
\[ \str \omega =\iota_{\omega}(e\otimes e)=\frac{1}{n!}\iota_{\omega}g^n.\]
For a simple double form $\omega=\omega_1\otimes \omega_2$ one has 
\[ \str(\omega_1\otimes \omega_2)=\iota_{\omega_1\otimes \omega_2}e\otimes e=\iota_{\omega_1}e\otimes \iota_{\omega_2}e=\str \omega_1\otimes \str \omega_2.\]
For a $(p,q)$ double form $\theta$ and an $(r,s)$ double form $\omega$, a direct application of identity (\ref{identity1}) shows that
\begin{equation}\label{firsteq} \theta. (\str \omega)=\str (\iota_{\theta}\omega) (-1)^{p(r+1)+q(s+1)}.
\end{equation}
In particular, we obtain the following useful identity
\begin{equation}\label{identity2}
\str\Bigl( \theta. (\str \omega)\Bigr)= \iota_{\theta}\omega (-1)^{n(r+s-p-q)+r(p+1)+s(q+1)}.
\end{equation}
The above identity generalizes the identity (9) in \cite[Theorem 3.4]{Labbi-double-forms} and corrects a missing sign in it as follows.
For $p=q=1$ and $\theta=g$, the above formula reads  
\[\str\Bigl( g. (\str \omega)\Bigr)= \iota_{g}\omega (-1)^{n(r+s)}=\cc \omega (-1)^{n(r+s)},\]
or equivalently, $g.\omega=\str\Bigl( \cc  (\str \omega)\Bigr) (-1)^{n(r+s)}$. We recover formula (9) in \cite[Theorem 3.4]{Labbi-double-forms} with the correct sign in case $n(r+s)$ is not even.\\
Next, one can use the identity (\ref{firsteq}) to recover the inner product of two $(p,q)$ double forms from the Hodge star operator as follows
\begin{equation}\label{inner-hodge}
\langle \omega_1,\omega_2\rangle=\iota_{\omega_1}\omega_2=\str\bigl(\str \iota_{\omega_1}\omega_2\bigr)=\str\bigl(\omega_1.(\str\omega_2)\bigr).
\end{equation}

\subsection{Composition product and transpose of double forms}\label{transpose}
For a double form $\omega \in  {\mathcal D}^{p,q}V^*$, we denote by $\omega^t\in  {\mathcal D}^{q,p}V^*$ the transpose of $\omega$ as a bilinear form that is 
\begin{equation*}
\omega^t(u_1,u_2)=\omega(u_2,u_1).
\end{equation*}
Alternatively, for a simple double form $\omega=\theta_1\otimes \theta_2$, we set
\begin{equation*}
\omega^t=(\theta_1\otimes \theta_2)^t=\theta_2\otimes \theta_1,
\end{equation*}
and then we extend its definition by assuming the linearity of the transpose.\\
A double form $\omega$ shall be called \emph{ a symmetric double form}  if  $\omega^t=\omega$.\\
The \emph{composition product} of two simple double forms 
 $\omega_1=\theta_1\otimes \theta_2\in { \mathcal D}^{p,q}V^*$ and
    $\omega_2=\theta_3\otimes \theta_4\in  {\mathcal D}^{r,s}V^*$ is defined by
    \begin{equation*}
\omega_1\circ\omega_2=(\theta_1\otimes \theta_2)\circ (\theta_3\otimes \theta_4)=\langle \theta_1,\theta_4\rangle \theta_3\otimes \theta_2\in  {\mathcal D}^{r,q}.
\end{equation*}
It is clear that $\omega_1\circ\omega_2=0$ unless $p=s$. Then one can extends the definition to all double forms by assuming bilinearity. \\
For arbitrary double forms  $\omega_1$, $\omega_2$, the following properties hold, see \cite[Proposition 3.1]{Labbi-Bel}
\begin{itemize}
\item[a)]  $(\omega_1\circ \omega_2)^t=\omega_2^t\circ \omega_1^t$ and  $(\omega_1 \omega_2)^t=\omega_1^t\omega_2^t$.
\item[b)] If $\omega_3$ is a third double form then $\langle \omega_1\circ\omega_2,\omega_3\rangle=\langle \omega_2,\omega_1^t\circ\omega_3\rangle=\langle \omega_1,\omega_3\circ \omega_2^t\rangle.$
\end{itemize}

\subsection{The orthogonal decomposition of double forms and the first Bianchi sum}\label{decomp-Bianchi}\label{ortho-decomp}
Given a $(p,p)$ double form on an Euclidean space  $(V,g)$, there are \emph{unique trace free} $(i,i)$ double forms $\omega_i$ for $i=0,1,...,p$ 
such that  \cite{Kulkarni, Labbi-double-forms}
\begin{equation}\label{orth-decomp}
\omega=\sum_{i=0}^pg^{p-i}\omega_i.
\end{equation}
In particular, one has $\omega=0\iff \omega_i=0$ for all $i=0,1,...,p.$\\
For $n\geq 2p$, the components $\omega_i$ are uniquely determined from the different contractions of $\omega$. 
An explicit formula is given in Theorem 3.7 of \cite{Labbi-double-forms}. \\
For $n<2p$, one has $\omega_k=0$ for all $k$ such that $n-p<k\leq p$, see  \cite[Proposition 2.1]{Labbi-Balkan}. Consequently, $\omega=g^{2p-n}\bar{\omega}$ for some $(n-p,n-p)$ double form $\bar{\omega}$ and one can get the remaining components $\omega_i$ of $\omega$ from those of $\bar{\omega}$ as $n\geq 2(n-p)$.\\
Next, we define  the first Bianchi sum $\mathfrak{S}$ of double forms and its adjoint $\widetilde{\mathfrak{S}}$ as follows (see \cite{Labbi-JAUMS}). Let $(e_i)$ be an orthonormal basis of $(V,g)$ and $(e^i)$ be the dual basis. For a simple $(p,q)$ double form $\omega=\omega_1\otimes \omega_2$, we set
\begin{equation}
\mathfrak{S}\omega=\sum_{i=1}^ne^i\wedge \omega_1\otimes \iota_{e_i}\omega_2,\,\,\,\,\,\, \widetilde{\mathfrak{S}}\omega=\sum_{i=1}^n \iota_{e_i}\omega_1\otimes e^i\wedge\omega_2,
\end{equation}
we extend the definition by linearity to non-simple double forms. The following properties can be easily checked  using the above definition
\begin{align}
\widetilde{\mathfrak{S}}\omega=&\bigl(\mathfrak{S}\omega^t\bigr)^t & \mathfrak{S}\omega=&\bigl(\widetilde{\mathfrak{S}}\omega^t\bigr)^t &\\
\mathfrak{S}(g^r\omega)=&g^r \mathfrak{S}(\omega) & \widetilde{\mathfrak{S}}(g^r\omega)=&g^r \widetilde{\mathfrak{S}}(\omega) &\\
\mathfrak{S}(\cc^r\omega)=&\cc^r \mathfrak{S}(\omega) & \widetilde{\mathfrak{S}}(\cc^r\omega)=&\cc^r \widetilde{\mathfrak{S}}(\omega). &
\end{align}
Let now $\omega=\sum_{i=0}^pg^{p-i}\omega_i$ be the decomposition of $\omega$ as above. The above formulas show that $${\mathfrak{S}}\omega=\sum_{i=0}^pg^{p-i}{\mathfrak{S}}\omega_i,$$
 since $\cc {\mathfrak{S}}\omega_i={\mathfrak{S}}\cc\omega_i=0$, we conclude that the above sum is precisely the (unique) orthogonal decomposition of 
${\mathfrak{S}}$. In particular, it follows that a $(p,p)$ double form $\omega$ satisfies the first Bianchi identity if and only if all its components $\omega_i$  satisfy the first Bianchi identity for $i=0,1,...,p$.
\subsection{Other identities and properties for double forms}
Let $(V,g)$ be an Euxlidean space.
Let $h$ and $k$ be two $(1,1)$ double forms on $V$ and $p\geq 2$. The following identity follows from Greub-Vanstone basic identity, see Theorem 6.3 in \cite{Labbi-Bel}.
\begin{equation}\label{Greub-Vanstone}
\frac{g^{p-1}h}{(p-1)!}\circ \frac{g^{p-1}k}{(p-1)!}=\frac{g^{p-1}(h\circ k)}{(p-1)!}+\frac{g^{p-2}hk}{(p-2)!}
\end{equation}
In order to formulate the next identities, we first need to fix some notations.
Let $\rho$ be the following inclusion linear map
\begin{equation}
\begin{split}
\rho: \wedge^2V^*&\hookrightarrow V^*\otimes V^*\\
e^i\wedge e^j&\to \rho(e^i\wedge e^j)=e^i\otimes  e^j-e^j \otimes e^i.
\end{split}
\end{equation}
\begin{proposition}
Let now $R$ be a $(2,2)$ symmetric double  form on $V$ satisfying the first Bianchi identity and let $(E_{\alpha})$ be an orthonormal basis for $\wedge^2V^*$. The the following identities hold
Let $(e_i)$ be a basis for $V$, $(e^i)$ be the dual basis. Then the following identities hold
\begin{equation}\label{2-identiries}
\begin{split}
\sum_{\alpha}\rho\bigl(R(E_{\alpha})\bigr)\circ \rho(E_{\alpha})&=-\cc  R,\\
\sum_{\alpha}\rho\bigl(R(E_{\alpha})\bigr). \rho(E_{\alpha})&= 2R,
\end{split}
\end{equation}
where $\cc R$ denotes the Ricci contraction of $R$ and the dot (resp. the circle) denotes the exterior (resp. composition) product of double forms.
\end{proposition}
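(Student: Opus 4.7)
The plan is to verify both identities by direct computation in an orthonormal basis, first noting that the sums $\sum_{\alpha}\rho(R(E_{\alpha}))\circ \rho(E_{\alpha})$ and $\sum_{\alpha}\rho(R(E_{\alpha}))\cdot \rho(E_{\alpha})$ are trace-like contractions against the Casimir element $\sum_{\alpha}E_{\alpha}\otimes E_{\alpha}$, so they do not depend on the choice of orthonormal basis $(E_{\alpha})$ of $\wedge^2V^*$. I will therefore pick an orthonormal basis $(e^i)$ of $V^*$ and use $E_{ij}=e^i\wedge e^j$ for $i<j$. Writing $R=\sum_{i<j,\,k<l}R_{ijkl}(e^i\wedge e^j)\otimes(e^k\wedge e^l)$ with $R_{ijkl}$ antisymmetric in $(i,j)$ and in $(k,l)$ and invariant under swapping the two pairs, one has $R(e^i\wedge e^j)=\sum_{k<l}R_{ijkl}\,e^k\wedge e^l$ and $\rho(R(E_{ij}))=\sum_{k<l}R_{ijkl}(e^k\otimes e^l-e^l\otimes e^k)$.

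For the first identity, I will expand $\rho(R(E_{ij}))\circ \rho(E_{ij})$ using the composition rule $(e^a\otimes e^b)\circ(e^c\otimes e^d)=\delta_{ad}\,e^c\otimes e^b$. This produces four Kronecker-delta contributions per $(i,j)$. After summing over $i<j$ and $k<l$ and exploiting the antisymmetries of $R_{ijkl}$ to lift the ordering restrictions, the four terms coalesce via the $\delta$-contractions into $-\sum_{j,l}\bigl(\sum_i R_{ijil}\bigr)e^j\otimes e^l$, which is exactly $-\cc R$. No use of the first Bianchi identity is required here.

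For the second identity, the analogous expansion uses $(e^a\otimes e^b)\cdot(e^c\otimes e^d)=(e^a\wedge e^c)\otimes(e^b\wedge e^d)$, yielding four exterior-product terms per $(i,j)$. The pair-symmetry $R_{ijkl}=R_{klij}$ merges them in pairs, leaving a linear combination of tensors of the type $(e^k\wedge e^i)\otimes(e^l\wedge e^j)$ and $(e^k\wedge e^j)\otimes(e^l\wedge e^i)$ weighted by $R_{ijkl}$. The decisive step is to invoke the first Bianchi identity $R_{ijkl}+R_{iklj}+R_{iljk}=0$ (and, via pair-symmetry, its analogue in the first three indices) to rearrange these sums into $2\sum_{i<j,\,k<l}R_{ijkl}(e^i\wedge e^j)\otimes(e^k\wedge e^l)=2R$.

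The main obstacle will be the combinatorial bookkeeping: tracking signs, dummy-index relabelings, and the conventions for extending the $i<j,k<l$ sums to unrestricted sums via antisymmetry. The non-trivial point is pinpointing the exact place in the second computation where the first Bianchi identity has to be inserted --- a test with $R=(e^1\wedge e^2)\otimes(e^3\wedge e^4)+(e^3\wedge e^4)\otimes(e^1\wedge e^2)$, which does not satisfy Bianchi, shows that the second identity genuinely fails without it, so the algebraic manipulation must explicitly use Bianchi to produce exactly the coefficient $2R$ rather than a different $(2,2)$ double form with the same antisymmetries.
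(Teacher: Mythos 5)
Your proposal is correct and follows essentially the same route as the paper: both reduce to the orthonormal basis $E_{ij}=e^i\wedge e^j$ after noting basis-independence of the sums, derive the first identity from the antisymmetries of $R_{ijkl}$ and the delta-contractions alone, and invoke the first Bianchi identity only in the second computation to convert the resulting $(e^k\wedge e^i)\otimes(e^l\wedge e^j)$-type sums into $2R$. The only addition on your side is the explicit non-Bianchi counterexample confirming that the second identity genuinely requires Bianchi, which the paper leaves implicit.
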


\begin{proof} The above two sums do not depend on the choice of the orthonormal basis of $\wedge^2V^*$ as they are traces of bilinear forms. Then let  $(e_i)$ be an orthonormal basis for $V$ and let $(e^i)$ be the dual basis. Then one has
\begin{equation*}
\begin{split}
\sum_{i<j} \rho\bigl(R(e^i\wedge e^j)\bigr)\circ \rho(e^i\wedge e^j) &=\frac{1}{2}\sum_{i,j} \rho\bigl(R(e^i\wedge e^j)\bigr)\circ \rho(e^i\wedge e^j) \\
&=\frac{1}{2}\sum_{i,j,k,l}R_{ijkl}(e^k\otimes e^l)\circ (e^i\otimes e^j-e^j\otimes e^i)\\
&=\frac{1}{2}\sum_{i,j,k,l}R_{ijkl}\langle e^k,e^j\rangle (e^i\otimes e^l)-\frac{1}{2}\sum_{i,j,k,l}R_{ijkl}\langle e^k,e^i\rangle (e^j\otimes e^l)\\
&=\frac{1}{2}\sum_{i,j,l}R_{ijjl} (e^i\otimes e^l)-\frac{1}{2}\sum_{i,j,l}R_{ijil} (e^j\otimes e^l)\\
&=-\frac{1}{2}\sum_{i,l}-{\mathbf c} R(e_i,e_l) (e^i\otimes e^l)-\frac{1}{2}\sum_{j,l}-{\mathbf c} R(e_j,e_l) (e^j\otimes e^l)
=-{\mathbf c} R
\end{split}
\end{equation*}
For the proof of the second identity, we shall use the first Bianchi identity as follows.

\begin{equation*}
\begin{split}
\sum_{i<j} \rho\bigl(R(e^i\wedge e^j)\bigr).&\rho(e^i\wedge e^j) =\frac{1}{2}\sum_{i,j} \rho\bigl(R(e^i\wedge e^j)\bigr). \rho(e^i\wedge e^j)\\ 
&=\frac{1}{2}\sum_{i,j,k,l}R_{ijkl}(e^k\otimes e^l). (e^i\otimes e^j-e^j\otimes e^i)\\
&=\sum_{i,j,k,l}R_{ijkl} (e^k\otimes e^l). (e^i\otimes e^j)
=\sum_{i,j,k,l}R_{ijkl} (e^k\wedge e^i)\otimes (e^l\otimes e^j)\\
&=-\sum_{i,j,k,l}R_{jkil} (e^k\wedge e^i)\otimes (e^l\otimes e^j)  -\sum_{i,j,k,l}R_{kijl} (e^k\wedge e^i)\otimes (e^l\otimes e^j)\\
&=-\sum_{i,j,k,l}R_{iljk} (e^k\wedge e^i)\otimes (e^l\otimes e^j)  +4R
=\sum_{i,j,k,l}R_{ilkj} (e^k\wedge e^i)\otimes (e^l\otimes e^j)  +4R\\
&=\sum_{i,j,k,l}R_{ijkl} (e^k\wedge e^i)\otimes (e^j\otimes e^l)  +4R
=-\sum_{i,j,k,l}R_{ijkl} (e^k\wedge e^i)\otimes (e^l\otimes e^j)  +4R\\
\end{split}
\end{equation*}
\end{proof}

\begin{proposition}[Proposition 2.3 in \cite{Labbi-double-forms}]\label{1-1}
Let $\omega$ be a $(p,q)$ double form, then
\begin{itemize}
\item[(a)]If $p+q+k<n+1$, then $g^k.\omega=0\implies \omega=0$.
\item[(b)]If $p+q+k\geq n+1$ and $p+q\leq n$, then $g^k.\omega=0\implies {\mathbf c}^r\omega=0$ for $r=p+q+k-n.$
\end{itemize}
\end{proposition}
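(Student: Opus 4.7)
The plan is to uncover an $\mathfrak{sl}_2$-triple hidden in the pair $(\cc,g)$ on the double exterior algebra $\mathcal{D}V^*$ and then exploit the resulting primitive decomposition. The fundamental ingredient is the commutator
\[ [\cc,g]=(n-p-q)\,\mathrm{Id}\qquad\text{on }\mathcal{D}^{p,q}V^*, \]
which I would obtain by expanding $g=\sum_i e^i\otimes e^i$ and $\cc=\sum_i\iota_{e_i}\otimes\iota_{e_i}$ and applying the standard anti-commutation $\iota_{e_i}\mu_{e^j}+\mu_{e^j}\iota_{e_i}=\delta_{ij}\mathrm{Id}$ on $\Lambda V^*$ in each tensor factor separately. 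A routine induction then yields
\[ [\cc,g^k]=k(n-p-q-k+1)\,g^{k-1}\qquad\text{on }\mathcal{D}^{p,q}V^*. \]
Together with the weight operator acting as the scalar $n-p-q$ on $\mathcal{D}^{p,q}V^*$, this is an $\mathfrak{sl}_2$-triple with $\cc$ raising and $g$ lowering the weight by $2$.

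Call $\omega\in\mathcal{D}^{p,q}V^*$ primitive if $\cc\omega=0$; by the ensuing representation theory, every $\omega$ decomposes as $\omega=\sum_{i\ge 0}g^i\omega_i$ with $\omega_i\in\mathcal{D}^{p-i,q-i}V^*$ primitive, and the summands $g^{k+i}\omega_i$ are pairwise orthogonal in $\mathcal{D}^{p+k,q+k}V^*$: for $i<j$, adjointness of $g$ and $\cc$ reduces their inner product to a nonzero multiple of $\langle\cc^{j-i}\omega_i,\omega_j\rangle$, which vanishes by primitivity of $\omega_i$. Iterating the commutator identity on a primitive $\omega_i$ produces the key formula
\[ \cc^{k+i}g^{k+i}\omega_i=(k+i)!\,\frac{(n-p-q+2i)!}{(n-p-q+i-k)!}\,\omega_i, \]
whose coefficient is strictly positive exactly when $i\ge r:=p+q+k-n$. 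For $i<r$, on the other hand, the $\mathfrak{sl}_2$-irreducible generated by $\omega_i$ has length $n-p-q+2i+1\le k+i$, so $g^{k+i}\omega_i$ already vanishes identically.

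To finish, suppose $g^k\omega=0$. Expanding via the decomposition and using pairwise orthogonality, each $g^{k+i}\omega_i=0$; applying $\cc^{k+i}$ and the identity above then forces $\omega_i=0$ for every $i\ge r$. In case (a), the hypothesis $p+q+k\le n$ gives $r\le 0$, so every $\omega_i$ vanishes and hence $\omega=0$. In case (b) only $\omega=\sum_{i=0}^{r-1}g^i\omega_i$ survives; applying $\cc^r$ to each remaining summand yields zero, since $r>i$ overshoots the $\mathfrak{sl}_2$-irreducible generated by $\omega_i$, and therefore $\cc^r\omega=0$ as required.

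The step I expect to require the most care is establishing the existence (and uniqueness) of the primitive decomposition in the borderline regime where $p+q$ is close to $n$: one must verify that the recursive peeling of primitive parts is well-posed, which boils down to positivity of the coefficients $n-p-q-j+1$ in the steps that actually occur. This is exactly where the hypothesis $p+q\le n$ in part (b) enters, ensuring that the relevant weights are nonnegative so that every bidegree admits a genuine primitive piece.
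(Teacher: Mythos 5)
Your argument is correct, but it takes a genuinely different route from the paper. The paper's proof is a short computation: it quotes Lemma 2.2 of \cite{Labbi-double-forms}, which expands $\cc^k\bigl(\tfrac{g^k}{k!}\omega\bigr)$ as $\tfrac{g^k}{k!}\cc^k\omega$ plus a sum of terms $\binom{k}{r}\prod_{i=0}^{r-1}(n-p-q-i)\,\tfrac{g^{k-r}}{(k-r)!}\cc^{k-r}\omega$, pairs the identity with $\omega$, and uses adjointness of multiplication by $g^k$ and $\cc^k$ to obtain $\tfrac{1}{k!}\|g^k\omega\|^2$ as a sum of squared norms $\|\cc^{k-r}\omega\|^2$ with nonnegative coefficients; part (a) follows because the coefficient of $\|\omega\|^2$ is positive when $p+q+k\leq n$, and part (b) because the coefficient of $\|\cc^{p+q+k-n}\omega\|^2$ is positive. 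You instead assemble the full $\mathfrak{sl}_2$/Lefschetz package for $(\cc,g)$ and run the argument through the primitive decomposition; your commutator identities, the formula $\cc^{j}g^{j}\omega_i=j!\,m(m-1)\cdots(m-j+1)\,\omega_i$ with $m=n-p-q+2i$, and the positivity threshold $i\geq r$ are all right, so the proof closes. What your route buys is structural information the paper's does not (exactly which primitive components are killed, and in fact part (b) without the hypothesis $p+q\leq n$); what it costs is the appeal to complete reducibility of finite-dimensional $\mathfrak{sl}_2$-representations to justify existence and uniqueness of the decomposition $\omega=\sum_i g^i\omega_i$, which you assert rather than prove. Two small inaccuracies, neither fatal: the constant in your orthogonality step need not be nonzero (harmless, since the inner product is in any case a multiple of $\langle\cc^{j-i}\omega_i,\omega_j\rangle=0$), and your closing diagnosis of where $p+q\leq n$ enters is off --- the primitive decomposition is well-posed in every bidegree (a bidegree of negative weight simply carries no nonzero primitive vectors, since such a vector would generate an infinite-dimensional module), so that hypothesis is not actually needed for your version of part (b).
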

For the seek of completeness, we provide a simple proof of the above proposition as follows. 
\begin{proof}
Lemma 2.2 in \cite{Labbi-double-forms} show that
\[{\mathbf c}^k(\frac{g^k}{k!}.\omega)=\frac{g^k}{k!}.{\mathbf c}^k \omega+\sum_{r=1}^k \binom{k}{r}\Bigl(\prod_{i=0}^{r-1}(n-p-q-i)\Bigr) \frac{g^{k-r}}{(k-r)!}.{\mathbf c}^{k-r} \omega. \]
Taking the inner of the above expression with $\omega$ and using the fact that the exterior multiplication by $g^k$ is the adjoint of the contraction map ${\mathbf c}^k$, we immediately see that
\[\frac{1}{k!}||g^k\omega||^2=\frac{1}{k!}||{\rm c}^k\omega||^2+\sum_{r=1}^k \binom{k}{r}\Bigl(\prod_{i=0}^{r-1}(n-p-q-i)\Bigr)
\frac{1}{(k-r)!}||{\mathbf c}^{k-r}\omega||^2. \]
If $n-p-q+1>k\geq 1$, then all the terms in the above sum are nonnegative and in particular one has
\[\frac{1}{k!}||g^k\omega||^2\geq \Bigl(\prod_{i=0}^{k-1}(n-p-q-i)\Bigr)||\omega||^2. \]
This completes the proof of the first part of the proposition.\\
We note that if $n=p+q$, then the product in the above sum is zero and therefore we get $||g^k\omega||^2=||{\mathbf c}^k\omega||^2$ and part b) follows in this case. Next, we assume that $n>p+q$. We remark that the product $\prod_{i=0}^{r-1}(n-p-q-i)$ is zero if $0\leq n-p-q\leq r-1$. Hence we get
\begin{equation*}
\begin{split}
\frac{1}{k!}||g^k\omega||^2&=\frac{1}{k!}||{\mathbf c}^k\omega||^2+\sum_{r=1}^{n-p-q} \binom{k}{r}\Bigl(\prod_{i=0}^{r-1}(n-p-q-i)\Bigr)
\frac{1}{(k-r)!}||{\mathbf c}^{k-r}\omega||^2\\
&\geq  \binom{k}{n-p-q}\Bigl(\prod_{i=0}^{n-p-q-1}(n-p-q-i)\Bigr)
\frac{1}{(k-n+p+q)!}||{\mathbf c}^{k-n+p+q}\omega||^2.
\end{split}
\end{equation*}
Therefore, if $p+q+k\geq n+1$ then the condition $g^k\omega=0$ implies ${\mathbf c}^{p+q+k-n}\omega=0$. This completes the proof of the proposition.
\end{proof}
\section{Extensions of endomorphisms and Ricci type identities}\label{extensions-section}
Let $(V,g)$ be an Euclidean space of finite dimension $n$. Let $h:V\to V$ be a vector space endomorphism and let $\wedge h:\wedge V\to \wedge V$ denotes its extension by derivations to the exterior algebra $\wedge V$. We denote as well by $h$  the bilinear form associated to the operator $h$ via the metric $g$: $h(u,v):=g(h(u),v)$. It turns out that the bilinear form associated with the above operator $\wedge h$ via the standard inner product on $\wedge V$ is given by the exterior product of double forms as follows 
\begin{equation} (\wedge h) (\theta_1,\theta_2):=\langle \wedge h(\theta_1),\theta_2\rangle =\frac{g^{p-1}h}{(p-1)!}(\theta_1,\theta_2),\,\,\, {\rm}\,\, \theta_1,\theta_2\in \wedge^pV.
\end{equation}
Let $h^{\flat}:V^*\to V^*$ be the lifted operator, defined via the following commutative diagram (this not the dual of $h$ but it is instead the dual of its adjoint operator),
\begin{center}
\begin{tikzpicture}[scale=2]
\node (A) at (0,1) {$V$};
\node (B) at (1,1) {$V$};
\node (C) at (0,0) {$V^*$};
\node (D) at (1,0) {$V^*$};
\path[->,font=\scriptsize,>=angle 90]
(A) edge node[above]{$h$} (B)
(A) edge node[right]{$\flat$} (C)
(B) edge node[right]{$\flat$} (D)
(C) edge node[above]{$h^{\flat}$} (D);
\end{tikzpicture}
\end{center}
In particular, one has
\begin{equation}
h^\flat(\alpha)=\alpha\circ \bar{h},\,\,\, {\rm for}\,\, \alpha\in V^*,
\end{equation}
where $\bar{h}$ denotes the adjoint of $h$, and $\flat, \sharp=\flat^{-1}$ are the musical isomorphisms.\\
The spaces $(\wedge V)^*$ and $\wedge V^*$ are naturally isomorphic, let then $(\wedge h)^\flat:\wedge V^*\to \wedge V^*$ be the dual operator to $\wedge h$ as above.  Then one has
\[(\wedge h)^\flat(\theta)=\theta \circ (\overline{\wedge h})=\theta\circ \wedge \bar{h},\]
where the last equality can be for instance verified at the level of the corresponding bilinear forms as we have 
 $(g^{p-1}h)^t=(g^{p-1})^t h^t=g^{p-1}h^t$, see for instance \cite{Labbi-Bel}.
\begin{proposition}\label{ext-forms}
For any $\theta \in \wedge^pV^*$ one has
\begin{equation}
(\wedge h)^\flat(\theta)=(\theta\otimes 1) \circ \frac{g^{p-1}h^t}{(p-1)!},
\end{equation}
where $\circ$ denotes the composition of double forms and the $(p,0)$ double form on the right hand side of the equation is identified to a $p$-form.
\end{proposition}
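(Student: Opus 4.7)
My plan is to establish the identity by pairing both sides with an arbitrary test $(p,0)$ double form $\phi \otimes 1$ and invoking the adjoint property of the composition product (item (b) of Subsection~\ref{transpose}). By non-degeneracy of the inner product on $\mathcal{D}^{p,0}V^*$, the claim reduces to verifying
$$\langle (\theta \otimes 1) \circ \tfrac{g^{p-1}h^t}{(p-1)!},\ \phi \otimes 1\rangle = \langle (\wedge h)^\flat(\theta),\ \phi\rangle$$
for every $\phi \in \wedge^p V^*$.

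For the left hand side, since $(\theta \otimes 1)^t = 1 \otimes \theta$, property (b) gives
$$\langle (\theta \otimes 1) \circ \tfrac{g^{p-1}h^t}{(p-1)!},\ \phi \otimes 1\rangle = \langle \tfrac{g^{p-1}h^t}{(p-1)!},\ (1 \otimes \theta) \circ (\phi \otimes 1)\rangle = \langle \tfrac{g^{p-1}h^t}{(p-1)!},\ \phi \otimes \theta\rangle,$$
where the last step is immediate from applying the composition formula to simple elements. Next I would use the bilinear-form interpretation recorded at the top of Section~\ref{extensions-section}: the operator corresponding to the $(1,1)$ double form $h^t$ is $\bar h$, so the $(p,p)$ double form $\tfrac{g^{p-1}h^t}{(p-1)!}$ represents $\wedge \bar h$ on $\wedge^p V$, i.e.\ $\tfrac{g^{p-1}h^t}{(p-1)!}(u, v) = \langle \wedge \bar h(u), v\rangle$ for $u, v \in \wedge^p V$. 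Combined with $\langle \omega, \phi \otimes \theta\rangle = \omega(\phi^\sharp, \theta^\sharp)$ for a $(p,p)$ double form and the adjointness of $\wedge \bar h$ and $\wedge h$, this yields
$$\langle \tfrac{g^{p-1}h^t}{(p-1)!},\ \phi \otimes \theta\rangle = \langle \wedge \bar h(\phi^\sharp),\ \theta^\sharp\rangle = \langle \phi^\sharp,\ \wedge h(\theta^\sharp)\rangle.$$
On the right hand side, the commutative diagram defining $(\wedge h)^\flat$ says $(\wedge h)^\flat(\theta) = \bigl(\wedge h(\theta^\sharp)\bigr)^\flat$, and since $\flat$ is an isometry, $\langle (\wedge h)^\flat(\theta), \phi\rangle = \langle \wedge h(\theta^\sharp), \phi^\sharp\rangle$, which matches the expression just derived.

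The main conceptual obstacle is just keeping track of the several identifications in play simultaneously: between a $p$-form $\theta$ and its $(p,0)$ avatar $\theta \otimes 1$; between the operator $\wedge \bar h$ on $\wedge^p V$ and the $(p,p)$ double form $\tfrac{g^{p-1}h^t}{(p-1)!}$; and between $\flat$, the double-form transpose $(\cdot)^t$, and the operator adjoint. Once these alignments are fixed the argument is a short chain of equalities. A more computational alternative would be to reduce to simple $\theta = \alpha_1 \wedge \cdots \wedge \alpha_p$, use that $(\wedge h)^\flat$ is the derivation extension of $h^\flat$ on $V^*$, and expand $\tfrac{g^{p-1}h^t}{(p-1)!}$ in an orthonormal basis using $g^{p-1} = (p-1)!\sum_{|J|=p-1} e^J \otimes e^J$; that route works too, but replaces the bookkeeping above with explicit sign bookkeeping in the wedge expansion.
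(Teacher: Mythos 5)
Your argument is correct, but it follows a different route from the paper's. The paper proves the identity by a direct orthonormal-basis expansion: starting from $(\wedge h)^\flat(\theta)=\theta\circ\wedge\bar h$ (composition of maps), it writes $\theta\circ\wedge h^t=\sum_{I,J}\wedge h^t(e_I,e_J)\,\theta(e_J)\,e^I$ and recognizes each summand as $\wedge h^t(e_I,e_J)\,(\theta\otimes 1)\circ(e^I\otimes e^J)$, thereby converting the map composition into the composition product of double forms term by term. You instead dualize: you pair against a test form $\phi\otimes 1$, invoke the adjunction $\langle\omega_1\circ\omega_2,\omega_3\rangle=\langle\omega_2,\omega_1^t\circ\omega_3\rangle$ from Subsection~\ref{transpose} to reduce the left side to $\langle\frac{g^{p-1}h^t}{(p-1)!},\phi\otimes\theta\rangle$, and then use the identification $\frac{g^{p-1}h^t}{(p-1)!}(u,v)=\langle\wedge\bar h(u),v\rangle$ from the start of Section~\ref{extensions-section} together with the adjointness of $\wedge\bar h$ and $\wedge h$ and the fact that $\flat$ is an isometry. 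All the ingredients you use are available in the paper ($(\theta\otimes 1)^t=1\otimes\theta$, $(1\otimes\theta)\circ(\phi\otimes 1)=\phi\otimes\theta$, $\overline{\wedge h}=\wedge\bar h$), and the chain of equalities closes correctly, so the proof is complete. What your approach buys is a coordinate-free argument with no multi-index bookkeeping, at the cost of juggling several identifications ($\sharp/\flat$, transpose, operator adjoint) at once; the paper's computation is more pedestrian but makes the mechanism — that the composition product literally encodes operator composition — visible in coordinates, which is the pattern reused in the proof of Proposition~\ref{ext-formulas} immediately afterwards.
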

\begin{proof}
Let  $(e_i)$ be an orthonormal basis of $V$ and $(e^i)$ the dual basis. Let capital $I,J$ denote multi-indices for corresponding bases in the exterior algebras, then
\begin{equation*}
\begin{split}
\theta\circ \wedge h^t=& \sum_I \bigl(\theta\circ \wedge h^t(e_I)\bigr)e^I\\
=& \sum_I \theta(\wedge h^t(e_I))e^I=\sum_{I,J} \wedge h^t(e_I,e_J))\theta(e_J)e^I\\
=&\sum_{I,J} \wedge h^t(e_I,e_J))(\theta\otimes 1)\circ (e^I\otimes e^J)=(\theta\otimes 1)\circ \wedge h^t.
\end{split}
\end{equation*}
Note that in the above proof, the first $\circ$ is the usual composition of maps while the last ones denote the composition product of double forms. It was for the seek of simplifying notations that we denoted in this paper operators and their associated bilinear forms by the same symbols!
\end{proof}

Net, we extend $(\wedge h)^\flat$ to operate on double forms in four ways. For a simple double form $\theta^1\otimes\theta^2$ we set and then we extend by linearity
 \begin{equation}
\begin{split}
(\wedge h)_\ell^\flat(\theta^1\otimes\theta^2)&:=(\wedge h)^\flat(\theta^1)\otimes \theta^2,\\
(\wedge h)_r^\flat(\theta^1\otimes\theta^2) &:=\theta^1\otimes (\wedge h)^\flat(\theta^2),\\
(\wedge h)_{d}^\flat &:=(\wedge h)_\ell^\flat+(\wedge h)_r^\flat\\
(\wedge h)_{{\scriptsize \Delta}}^\flat &:=(\wedge h)^\flat(\theta^1)\otimes (\wedge h)^\flat(\theta^2).
\end{split}
\end{equation}

\begin{proposition}\label{ext-formulas}
 Let $\omega$ be a $(p,q)$ double form, then the following properties hold
 \begin{equation}
\begin{split}
(\wedge h)_\ell^\flat(\omega)&=\omega \circ \frac{g^{p-1}h^t}{(p-1)!} \\
(\wedge h)_r^\flat(\omega ) &=\frac{g^{q-1}h}{(q-1)!}\circ \omega, \\
(\wedge h)_{d}^\flat &=\omega \circ \frac{g^{p-1}h^t}{(p-1)!}+\frac{g^{q-1}h}{(q-1)!}\circ \omega.\\
(\wedge h)_{{\scriptsize \Delta}}^\flat &=\frac{g^{q-1}h}{(q-1)!}\circ \omega \circ \frac{g^{p-1}h^t}{(p-1)!}.\\
\end{split}
\end{equation}
\end{proposition}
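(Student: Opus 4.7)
My plan is to derive all four identities from Proposition \ref{ext-forms} together with elementary properties of the transpose and the composition product of double forms. By bilinearity it suffices to verify each formula on a simple double form $\omega = \theta^1 \otimes \theta^2$ with $\theta^1 \in \wedge^p V^*$ and $\theta^2 \in \wedge^q V^*$.

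I will begin with identity (a). By definition $(\wedge h)^\flat_\ell(\omega) = (\wedge h)^\flat(\theta^1) \otimes \theta^2$, and Proposition \ref{ext-forms} rewrites the first factor as $(\theta^1 \otimes 1) \circ \frac{g^{p-1}h^t}{(p-1)!}$. The key remaining ingredient is the elementary tensor identity
\[
\bigl[(\theta^1 \otimes 1) \circ A\bigr] \otimes \theta^2 = (\theta^1 \otimes \theta^2) \circ A,
\]
valid for every $(p,p)$ double form $A$. This is transparent from the definition of $\circ$: the composition contracts the first factor of its left argument against the second factor of its right argument and leaves the remaining second factor intact, so expanding $A = \sum a_{IJ}\, e^I \otimes e^J$ in an orthonormal basis makes both sides equal to $\bigl(\sum a_{IJ}\langle \theta^1,e^J\rangle e^I\bigr)\otimes \theta^2$. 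Applying it with $A = \frac{g^{p-1}h^t}{(p-1)!}$ yields (a).

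Identity (b) then follows from (a) by transposition. Observe that $(\wedge h)^\flat_r(\omega) = \bigl[(\wedge h)^\flat_\ell(\omega^t)\bigr]^t$, since $\omega^t = \theta^2 \otimes \theta^1$ swaps the two factors of a simple double form. Applying (a) to $\omega^t \in \mathcal{D}^{q,p}$ gives $(\wedge h)^\flat_\ell(\omega^t) = \omega^t \circ \frac{g^{q-1}h^t}{(q-1)!}$, and the identities $(\omega_1 \circ \omega_2)^t = \omega_2^t \circ \omega_1^t$ from Section \ref{transpose} together with the symmetry of $g$ (whence $(g^{q-1}h^t)^t = g^{q-1}h$) deliver $(\wedge h)^\flat_r(\omega) = \frac{g^{q-1}h}{(q-1)!} \circ \omega$.

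Identity (c) is immediate by adding (a) and (b), since by definition $(\wedge h)^\flat_d = (\wedge h)^\flat_\ell + (\wedge h)^\flat_r$. For identity (d) I will use that $(\wedge h)^\flat_\Delta(\omega) = (\wedge h)^\flat_\ell\bigl((\wedge h)^\flat_r(\omega)\bigr)$ on simple tensors, since applying the left extension to the result of the right extension reproduces the diagonal action on $\theta^1 \otimes \theta^2$. Because $(\wedge h)^\flat_r(\omega)$ remains a $(p,q)$ double form, identity (a) applies to it, and associativity of the composition product collapses the resulting triple composition into the desired sandwich $\frac{g^{q-1}h}{(q-1)!} \circ \omega \circ \frac{g^{p-1}h^t}{(p-1)!}$. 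The only nontrivial step is the tensor identity in paragraph two; the remainder is routine bookkeeping with transposes and associativity.
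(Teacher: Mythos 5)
Your proof is correct and follows essentially the same route as the paper: part (a) by the basis computation underlying Proposition \ref{ext-forms} (which you cite and bridge with the elementary identity $\bigl[(\theta^1\otimes 1)\circ A\bigr]\otimes\theta^2=(\theta^1\otimes\theta^2)\circ A$ rather than repeating the expansion), part (b) by transposing (a), and parts (c), (d) as immediate consequences. The transpose bookkeeping $(g^{q-1}h^t)^t=g^{q-1}h$ and the use of associativity of $\circ$ in (d) are both sound, so nothing is missing.
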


\begin{proof}
The proof is similar to the proof of the above proposition, using the same notations one has
\begin{equation*}
\begin{split}
(\wedge h)_\ell^\flat(\theta^1\otimes\theta^2)&=(\wedge h)^\flat(\theta^1)\otimes \theta^2=(\theta^1\circ (\wedge h)^t)\otimes \theta^2,\\
=& \sum_I \theta^1(\wedge h^t(e_I))e^I\otimes \theta^2=\sum_{I,J} \wedge h^t(e_I,e_J))\theta^1(e_J)e^I\otimes \theta^2,\\
=&\sum_{I,J} \wedge h^t(e_I,e_J))(\theta^1\otimes \theta^2)\circ (e^I\otimes e^J)=(\theta^1\otimes \theta^2)\circ \wedge h^t.
\end{split}
\end{equation*}
The second assertion can be proved using the first one in the following way
\[ (\wedge h)_r^\flat(\omega)=\bigl((\wedge h)_\ell^\flat (\omega^t)\bigr)^t=\bigl(\omega^t \circ \frac{g^{q-1}h^t}{(q-1)!}\bigr)^t=\frac{g^{q-1}h}{(q-1)!}\circ \omega.\]
The third  and fourth parts follow directly from parts one and two of the same proposition.
\end{proof}
\begin{proposition}\label{derivations}
The three operators  $(\wedge h)_\ell^\flat, (\wedge h)_r^\flat$ and $(\wedge h)_{d}^\flat$ act by derivations on the exterior algebra of double forms.
\end{proposition}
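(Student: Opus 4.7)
The plan is to reduce everything to a single observation: the dual operator $(\wedge h)^\flat$ acts as an even derivation on the exterior algebra $\wedge V^*$, and this property lifts componentwise to $\mathcal{D}V^* = \wedge V^*\otimes \wedge V^*$ because the exterior product of double forms is defined factorwise.

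First I would verify (or simply recall) that $(\wedge h)^\flat$ is an even derivation on $\wedge V^*$. Since $\wedge h$ is by definition the derivation extension of $h$ to $\wedge V$, its adjoint with respect to $g$ is $\wedge \bar h$, which is again a derivation (extension of $\bar h$). The map $(\wedge h)^\flat$ is obtained from $\wedge \bar h$ by transport through the musical isomorphism $\flat:\wedge V\to \wedge V^*$, and since $\flat$ is an algebra isomorphism with respect to the exterior product, derivations are sent to derivations. Equivalently, one can check directly that $(\wedge h)^\flat=\wedge(h^\flat)$, the derivation extension of $h^\flat:V^*\to V^*$; in either presentation,
\begin{equation*}
(\wedge h)^\flat(\alpha\wedge\beta)=(\wedge h)^\flat(\alpha)\wedge\beta+\alpha\wedge(\wedge h)^\flat(\beta),\qquad \alpha,\beta\in\wedge V^*,
\end{equation*}
with no sign, because $(\wedge h)^\flat$ preserves degree.

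Next, take two simple double forms $\omega_1=\theta_1\otimes\theta_2\in\mathcal{D}^{p,q}V^*$ and $\omega_2=\theta_3\otimes\theta_4\in\mathcal{D}^{r,s}V^*$ and compute directly, using the definition of the exterior product of double forms and the above Leibniz rule:
\begin{align*}
(\wedge h)^\flat_\ell(\omega_1\cdot\omega_2)
&=(\wedge h)^\flat\!\bigl(\theta_1\wedge\theta_3\bigr)\otimes\bigl(\theta_2\wedge\theta_4\bigr)\\
&=\bigl((\wedge h)^\flat(\theta_1)\wedge\theta_3\bigr)\otimes\bigl(\theta_2\wedge\theta_4\bigr)+\bigl(\theta_1\wedge(\wedge h)^\flat(\theta_3)\bigr)\otimes\bigl(\theta_2\wedge\theta_4\bigr)\\
&=(\wedge h)^\flat_\ell(\omega_1)\cdot\omega_2+\omega_1\cdot(\wedge h)^\flat_\ell(\omega_2).
\end{align*}
The analogous calculation, acting on the right tensor factor instead, gives the Leibniz rule for $(\wedge h)^\flat_r$. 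Extending by linearity completes the proof that both $(\wedge h)^\flat_\ell$ and $(\wedge h)^\flat_r$ are (even) derivations on $\mathcal{D}V^*$, and since the sum of two derivations is again a derivation, the same holds for $(\wedge h)^\flat_d=(\wedge h)^\flat_\ell+(\wedge h)^\flat_r$.

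There is no real obstacle; the only point to be careful about is the sign convention. Because $h$ preserves $V$, the extensions $(\wedge h)^\flat$, $(\wedge h)^\flat_\ell$, $(\wedge h)^\flat_r$, $(\wedge h)^\flat_d$ all preserve the (bi)grading, so they are even derivations and no Koszul sign appears in the Leibniz rule. In particular the operator $(\wedge h)^\flat_\Delta$ from the previous proposition is \emph{not} a derivation in general, which is why it is absent from the statement. One could alternatively prove the proposition from the index-free formulas in Proposition \ref{ext-formulas} by manipulating the composition product of $\omega$ with $g^{p-1}h^t/(p-1)!$, but the componentwise approach above is considerably more transparent.
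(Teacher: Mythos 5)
Your proof is correct and follows essentially the same route as the paper's: reduce to the Leibniz rule for $(\wedge h)^\flat$ on $\wedge V^*$ and apply it factorwise to the exterior product of simple double forms (the paper writes out the computation for $(\wedge h)^\flat_r$ rather than $(\wedge h)^\flat_\ell$, and likewise dismisses the remaining cases as similar). Your preliminary justification that $(\wedge h)^\flat$ is itself an even derivation on $\wedge V^*$, which the paper uses tacitly, is a welcome addition but does not change the argument.
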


\begin{proof}
Let $\omega=\theta^1\otimes\theta^2$ and $\theta=\theta^3\otimes\theta^4$ be simple double forms, then

\begin{equation*}
\begin{split}
 (\wedge h)_r^\flat(\omega.\theta)&= (\wedge h)_r^\flat\left((\theta^1\otimes\theta^2).(\theta^3\otimes\theta^4)\right)\\
&= (\wedge h)_r^\flat((\theta^1\wedge\theta^3)\otimes(\theta^2\wedge\theta^4))\\
&=(\theta^1\wedge\theta^3)\otimes (\wedge h)^\flat(\theta^2\wedge\theta^4)\\
&=(\theta^1\wedge\theta^3)\otimes [(\wedge h)^\flat(\theta^2)\wedge\theta^4+\theta^2\wedge (\wedge h)^\flat(\theta^4)]\\
&=[(\theta^1\wedge\theta^3)\otimes (\wedge h)^\flat(\theta^2)\wedge\theta^4]+[(\theta^1\wedge\theta^3)\otimes\theta^2\wedge (\wedge h)^\flat(\theta^4)]\\
&=(\theta^1\otimes (\wedge h)^\flat(\theta^2)).(\theta^3\otimes\theta^4)+(\theta^1\otimes\theta^2).(\theta^3\otimes (\wedge h)^\flat(\theta^4))\\
&=(  (\wedge h)_r^\flat(\omega)).\theta+\omega  .(\wedge h)_r^\flat(\theta).
\end{split}
\end{equation*}
The proofs for  $(\wedge h)_\ell^\flat$ and $(\wedge h)_{d}^\flat$ are similar.
\end{proof}
\subsection{Applications}
\subsubsection{Curvature transformation and its extensions by derivations} Let  $R$ be a $(2,2)$  double form on $V$.
Let $(e_i)$ be an orthonormal basis of $V$. In what follows we shall denote by $R_{ij}$ the curvature transformation $R_{e_ie_j}$. It is the $(1,1)$ double form given by $R(e_i,e_j, .,.)$,  we shall use the same notation to denote the corresponding operator $V\to V$.\\
We will continue to use the same symbol $R_{ij}$ to denote the extension by derivations of the operator $R_{ij}$ to the exterior algebra $\wedge V^*$  (that is the $(\wedge R_{ij})^\flat$ of the above section)  and to the exterior algebra of double forms $\wedge V^*\otimes \wedge V^*$ ( that is the $(\wedge R_{ij})_{rl}^\flat$ of the above section).\\
As a direct consequence of Proposition \ref{ext-formulas}, we obtain the following Ricci type identities 

\begin{proposition}\label{rij-derivations}
Let $\omega$ be a $(p,q)$ double form, then
\[ R_{ij}\omega=\frac{g^{q-1}R_{ij}}{(q-1)!}\circ \omega-\omega\circ \frac{g^{p-1}R_{ij}}{(p-1)!},\]
with the convention that $g^{-1}=0$.
\end{proposition}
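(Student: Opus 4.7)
The plan is to recognize this proposition as an immediate corollary of Proposition \ref{ext-formulas}, once one accounts for the skew-symmetry of the curvature transformation. The symbol $R_{ij}\omega$ in the statement denotes the extension by derivations of the endomorphism $R_{ij}:V\to V$ to the exterior algebra of double forms, which in the language of Section \ref{extensions-section} is exactly $(\wedge R_{ij})_d^\flat(\omega)$. So the problem reduces to reading off the third identity of Proposition \ref{ext-formulas} after specializing $h=R_{ij}$.

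The essential observation is that, as a $(1,1)$ double form, $R_{ij}$ satisfies $R_{ij}^t=-R_{ij}$. This comes directly from the antisymmetry of the $(2,2)$ Riemann tensor in its last two arguments: the bilinear form $R_{ij}(u,v)=R(e_i,e_j,u,v)$ is skew in $(u,v)$, hence the associated operator $V\to V$ is skew-adjoint. Plugging $h=R_{ij}$, $h^t=-R_{ij}$ into the formula
\[
(\wedge h)_d^\flat(\omega)=\omega\circ\frac{g^{p-1}h^t}{(p-1)!}+\frac{g^{q-1}h}{(q-1)!}\circ\omega
\]
yields at once
\[
R_{ij}\omega=\frac{g^{q-1}R_{ij}}{(q-1)!}\circ\omega-\omega\circ\frac{g^{p-1}R_{ij}}{(p-1)!},
\]
which is the claimed identity. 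The convention $g^{-1}=0$ handles the degenerate cases $p=0$ and $q=0$: in each case, one of the factors has nothing for $R_{ij}$ to act on by derivations, and the corresponding term must vanish.

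There is no real obstacle here; the only subtlety is the bookkeeping of the transpose sign that converts the sum in Proposition \ref{ext-formulas} into the commutator-like difference that appears in the statement. Because this is exactly the standard action of $\mathfrak{so}(n)$ as a derivation on the tensor algebra, recognizing the minus sign as the signature of skew-symmetry makes the argument essentially automatic.
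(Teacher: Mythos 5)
Your proof is correct and takes essentially the same approach as the paper, whose entire argument reads: ``Recall that $(R_{ij})^t=-R_{ij}$ and use Proposition \ref{ext-formulas}.'' You have simply spelled out the same two ingredients---the identification of $R_{ij}\omega$ with $(\wedge R_{ij})_d^\flat(\omega)$ and the sign coming from skew-symmetry---in more detail.
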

\begin{proof}
Recall that $(R_{ij})^t=-R_{ij}$ and use Proposition \ref{ext-formulas}.
\end{proof}
\noindent
\begin{remark} Note that for $\omega=h$ a $(1,1)$ double form the previous formula read $R_{ij}(h)=R_{ij}\circ h-h\circ R_{ij}$. In particular, we recover the following  Ricci identity
$$R_{ij}(h)(z,u)=-h(R_{ij}z,u)-h^t(R_{ij}u,z).$$
\end{remark}
\subsubsection{The transformation $(\rho_{ij})$  and its extensions}
Let $(e_i)$ be a basis for $V$ and $(e^i)$ be the dual basis. Let $\rho$ be the following inclusion map
\begin{equation}
\begin{split}
\rho: \wedge^2V^*&\hookrightarrow V^*\otimes V^*\\
e^i\wedge e^j&\to \rho(e^i\wedge e^j)=e^i\otimes  e^j-e^j \otimes e^i.
\end{split}
\end{equation}
To simplify notations, we shall denote by $\rho_{ij}$ the above $(1,1)$ double form $\rho(e^i\wedge e^j)$, we denote as well by $\rho_{ij}$ the associated operator $V\to V.$  Namely, $\rho_{ij}(v)=e^i(v)e_j-e^j(v)e_i.$ 
Let $\rho_{ij}$ denote also the extension by derivations of the operator $\rho_{ij}$ to the exterior algebra $\wedge V^*$, that is the $(\wedge \rho_{ij})^\flat$ of the above section.  We denote by $(\rho_{ij})_\ell$  the left extension of $\rho_{ij}$ to the exterior algebra of double forms $\wedge V^*\otimes \wedge V^*$ ( that is the $(\wedge \rho_{ij})_{rl}^\flat$ of the above section).\\

\begin{proposition}\label{4parts-proposition}
In what follows, in the case $p=0$, we use the convention $g^{-1}=0$.
\begin{enumerate}
\item 
For all $\theta\in \wedge^pV^*$ one has 
\[\rho_{ij}(\theta)=-(\theta\otimes 1) \circ \frac{g^{p-1}\rho_{ij}}{(p-1)!},\]
where the right hand side $(p,0)$ double form is seen as a $p$ form. 
\item For all $\omega\in \wedge^pV^*\otimes \wedge^qV^*$ one has 
\[ (\rho_{ij})_\ell(\omega)=-\omega \circ \frac{g^{p-1}\rho_{ij}}{(p-1)!}\,\,\, \,\,\,\, \omega\in \wedge^pV^*\otimes \wedge^qV^*.\]
\item For any double forms $\omega_1, \omega_2$ we have
\[(\rho_{ij})_\ell(\omega_1\circ \omega_2)=\omega_1\circ (\rho_{ij})_\ell(\omega_2).\]
\item For any double forms $\omega_1, \omega_2$ we have
\[(\rho_{ij})_\ell(\omega_1.\omega_2)=(\rho_{ij})_\ell(\omega_1).\omega_2+\omega_1. (\rho_{ij})_\ell(\omega_2).\]

\end{enumerate}
\end{proposition}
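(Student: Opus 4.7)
The plan is to reduce all four statements to previously established formulas applied to the endomorphism $h=\rho_{ij}$. The single key observation is that $\rho_{ij}$, viewed as a $(1,1)$ double form or equivalently as the operator $V\to V$ given by $\rho_{ij}(v)=e^i(v)\,e_j-e^j(v)\,e_i$, is skew-symmetric, so $\rho_{ij}^t=-\rho_{ij}$. This single sign accounts for the minus signs appearing in parts (1) and (2).

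For part (1), I would specialize Proposition \ref{ext-forms} to $h=\rho_{ij}$. Since $h^t=-\rho_{ij}$, this immediately gives
\[(\wedge\rho_{ij})^\flat(\theta)=(\theta\otimes 1)\circ\frac{g^{p-1}\rho_{ij}^t}{(p-1)!}=-(\theta\otimes 1)\circ\frac{g^{p-1}\rho_{ij}}{(p-1)!}.\]
The convention $g^{-1}=0$ handles $p=0$, where $\rho_{ij}$ annihilates scalars and both sides vanish. Part (2) is obtained in the same way by specializing the first formula of Proposition \ref{ext-formulas} to $h=\rho_{ij}$: the sign flip from $h^t=-\rho_{ij}$ produces the minus sign in the stated identity.

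For part (3), I would combine part (2) with associativity of the composition product of double forms. Associativity is immediate from the defining formula $(\theta_1\otimes\theta_2)\circ(\theta_3\otimes\theta_4)=\langle\theta_1,\theta_4\rangle\,\theta_3\otimes\theta_2$. Writing $\omega_1\in\mathcal{D}^{s,q}$ and $\omega_2\in\mathcal{D}^{r,s}$ so that $\omega_1\circ\omega_2\in\mathcal{D}^{r,q}$, applying part (2) both to $\omega_1\circ\omega_2$ and to $\omega_2$ gives
\[(\rho_{ij})_\ell(\omega_1\circ\omega_2)=-(\omega_1\circ\omega_2)\circ\frac{g^{r-1}\rho_{ij}}{(r-1)!}=\omega_1\circ\Bigl(-\omega_2\circ\frac{g^{r-1}\rho_{ij}}{(r-1)!}\Bigr)=\omega_1\circ(\rho_{ij})_\ell(\omega_2).\]
Finally, part (4) is precisely Proposition \ref{derivations} specialized to $h=\rho_{ij}$, which states that $(\wedge h)_\ell^\flat$ acts by derivations on the exterior algebra of double forms.

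None of the four parts presents a genuine obstacle; the argument is essentially bookkeeping. The only care required is tracking the sign produced by $\rho_{ij}^t=-\rho_{ij}$ and keeping straight the distinction between the left, right, both-sided, and diagonal extensions defined at the start of Section \ref{extensions-section} when invoking Proposition \ref{ext-formulas}.
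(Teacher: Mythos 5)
Your proposal is correct and follows exactly the paper's route: parts (1) and (2) are Propositions \ref{ext-forms} and \ref{ext-formulas} specialized to $h=\rho_{ij}$ with the sign coming from $\rho_{ij}^t=-\rho_{ij}$, part (3) follows from part (2) (your explicit use of associativity of the composition product is a welcome elaboration of the paper's ``immediate consequence''), and part (4) is Proposition \ref{derivations}. No gaps.
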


\begin{proof}
Parts (1) and (2) follow directly from Propositions \ref{ext-forms} and  \ref{ext-formulas} and the skew-symmetry of the $\rho_{ij}$. Part (3) is an immediate consequence of Part (2) and part (4) results from  Proposition \ref{derivations}.
\end{proof}

The following proposition shows that the operator $\rho_{ij}$ on double forms coincides with an algebraic operator that naturally appears in Weitzenb\"ock formulas for double forms.
\begin{proposition}
For any $(p,q)$ doule form on $V$ one has 
\[\rho_{ij}(\omega)=\bigl[i_{e_j\otimes 1}\mu_{e^i\otimes 1}-i_{e_i\otimes 1}\mu_{e^j\otimes 1}\bigr](\omega),\]
where for a given double form $\theta$, $\mu_\theta$ denotes the left multiplication by $\theta$ in the exterior algebra of double forms.
\end{proposition}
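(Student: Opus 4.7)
The plan is to reduce the identity to an equivalent single-exterior-algebra statement on $\wedge V^{*}$, and then verify it by comparing two derivations on a generating set.

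First I would simplify the right-hand side using identity~(\ref{2identities}). Since $e^i\otimes 1$ has left bi-degree one, that identity reads, as operators on $\mathcal{D}V^{*}$,
\[
\iota_{e_j\otimes 1}\,\mu_{e^i\otimes 1}=\delta_{ij}\,\mathrm{id}-\mu_{e^i\otimes 1}\,\iota_{e_j\otimes 1},
\]
and analogously with $i,j$ swapped. Subtracting, the $\delta_{ij}$-terms cancel, so the bracket $\bigl[\iota_{e_j\otimes 1}\mu_{e^i\otimes 1}-\iota_{e_i\otimes 1}\mu_{e^j\otimes 1}\bigr]$ equals $\mu_{e^j\otimes 1}\iota_{e_i\otimes 1}-\mu_{e^i\otimes 1}\iota_{e_j\otimes 1}$. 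Applied to a simple double form $\omega=\omega_1\otimes \omega_2$ this produces $\bigl(e^j\wedge \iota_{e_i}\omega_1-e^i\wedge \iota_{e_j}\omega_1\bigr)\otimes \omega_2$, while by the definition of the left extension one has $\rho_{ij}(\omega_1\otimes \omega_2)=\rho_{ij}^{\flat}(\omega_1)\otimes \omega_2$. By linearity the proposition is therefore equivalent to the single exterior-algebra identity
\[
\rho_{ij}^{\flat}(\alpha)=e^j\wedge \iota_{e_i}\alpha-e^i\wedge \iota_{e_j}\alpha,\qquad \alpha\in\wedge V^{*}.
\]

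Second, I would verify this reduced identity by checking that both sides are derivations of $\wedge V^{*}$ that agree on the degree-one generators. The left-hand side is a derivation by construction. For the right-hand side, each composite $e^a\wedge \iota_{e_b}$ is itself a derivation: applying the antiderivation rule for $\iota_{e_b}$ on $\omega\wedge\theta$ and then using graded commutativity to move $e^a$ past $\omega$, the sign $(-1)^{|\omega|}$ produced by the interior product is exactly absorbed, yielding $e^a\wedge \iota_{e_b}(\omega\wedge \theta)=(e^a\wedge \iota_{e_b}\omega)\wedge \theta+\omega\wedge(e^a\wedge \iota_{e_b}\theta)$. On a generator $e^k$ the right-hand side of the reduced identity gives $\delta_{ik}e^j-\delta_{jk}e^i$; the left-hand side, computed via Proposition~\ref{ext-forms} and the skew-adjointness $\rho_{ij}^{\,t}=-\rho_{ij}$ as $\rho_{ij}^{\flat}(e^k)=-e^k\circ \rho_{ij}$, gives the same expression. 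Two derivations that agree on a generating set agree on all of $\wedge V^{*}$, closing the argument.

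The only subtle point is recognizing that $e^a\wedge \iota_{e_b}$, a composition of an antiderivation with a multiplication operator, is in fact an honest (even) derivation of $\wedge V^{*}$. Without this observation the "agree on generators" step would not go through; everything else is routine bookkeeping of signs and bi-degrees.
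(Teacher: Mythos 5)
Your proof is correct and follows essentially the same route as the paper's: reduce to the left exterior factor, observe that both sides are derivations of $\wedge V^{*}$, and compare them on the degree-one generators via Proposition~\ref{4parts-proposition}. The only difference is that you first use identity~(\ref{2identities}) to rewrite the bracket as $\mu_{e^j\otimes 1}\iota_{e_i\otimes 1}-\mu_{e^i\otimes 1}\iota_{e_j\otimes 1}$, which makes the derivation property (asserted as ``immediate'' in the paper) genuinely transparent.
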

\begin{proof}
It is straightforward to see that the right hand side operator, like  the operator $\rho_{ij}$, acts on left factors of double forms. It's also immediate that the same operator acts by derivations on forms ($(p,0)$ forms). Consequently, it suffices to check that the above operators coincide on $V^*$. For, let $\alpha\in V^*$, then
\[\bigl[\iota_{e_i\otimes 1}\mu_{e^j\otimes 1}-i_{e_j\otimes 1}\mu_{e^i\otimes 1}\bigr](\alpha)=i_{e_i}(e^j\wedge \alpha)-i_{e_j}(e^i\wedge \alpha)=\alpha(e_j)e^i-\alpha(e_i)e^j.\]
On the other hand, the above proposition shows that
\[\rho_{ij}(\alpha)=-(\alpha\otimes 1)\circ \rho_{ij}=-(\alpha\otimes 1)\circ (e^i\otimes e^j-e^j\otimes e^i)=-\alpha(e_j)e^i+\alpha(e_i)e^j.\]
This completes the proof.
\end{proof}
\subsection{Left and right multiplications maps in the composition algebra and their extensions}
Let $h$ be a $(1,1)$ double form, we define the endomorphisms $\lambda_h, \rho_h$ of ${\mathcal D}^{1,1}V^*$  by
\[\lambda_h(k)=h\circ k,\,\,\,\, {\rm and}\,\, \rho_h(k)=k\circ h.\]
We continue to denote by $\lambda_h, \rho_h$ their extensions by derivations to the (diagonal) exterior sub-algebra of double forms ${\bigoplus_{p\geq 0}\mathcal D}^{p,p}V^*$. On the other side, the extensions $(\wedge h)_\ell^\flat$ and $ (\wedge h)_r^\flat$ as in section \ref{extensions-section} also operate by derivations on the exterior algebra by Proposition \ref{derivations}. Furthermore,  their restrictions to 
${\mathcal D}^{1,1}V^*$ coincide respectively  with $ \rho_{h^t}$ and $\lambda_h$   of ${\mathcal D}^{1,1}V^*$ respectively. We have therefore proved that for a $(p,p)$ double form one has
\begin{equation}
\lambda_h(\omega)=\frac{g^{p-1}h}{(p-1)!}\circ \omega,\,\, \rho_h(\omega)=\omega\circ \frac{g^{p-1}h}{(p-1)!}.
\end{equation}
In particular, if $R$ is a $(2,2)$ double form, one gets
\begin{equation}\label{hR} \lambda_h(R)(x,y,z,u)=\bigl(gh\circ R\bigr)(x,y,z,u)=h(R(x,y)z,u)-h(R(x,y)u,z).\end{equation}
\begin{remark} We defined in \cite{Labbi-variational} the operator $F_h$ operating on the exterior algebra of double forms by derivations and its restriction to $(1,1)$ double forms is $\lambda_h+\rho_h$. This operator played a key role in the proof of the main theorem in \cite{Labbi-variational}. It results from the above discussion, that for a $(p,p)$ double form one has
\begin{equation}
F_h(\omega)=\lambda_h(\omega)+\rho_h(\omega)=\frac{g^{p-1}h}{(p-1)!}\circ \omega +\omega\circ \frac{g^{p-1}h}{(p-1)!}.
\end{equation}

\end{remark}
\section{Clifford multiplication on forms and the sharp product of double forms}
\subsection{Lie algebra of forms and the adjoint representation}
 Let $e\in V^*$ and $\alpha\in \wedge^pV^*$. The Clifford product, denoted by a dot ".",  of $e$ and $\alpha$ is given by
\begin{equation}
\begin{split}
e.\alpha:=&e\wedge \alpha-i_{e}\alpha,\\
\alpha.e:=& (-1)^p\bigl( e\wedge \alpha+i_{e}\alpha\bigr).
\end{split}
\end{equation}
We can extend the above product to any two forms after  assuming that it is associative and bilinear. With this product, $\wedge V^*$ becomes an algebra isomorphic to the Clifford algebra ${\mathrm{Cl}}(V,g)$.\\
The Clifford algebra $\wedge V^*$ which is associative can be turned into a Lie algebra via the commutator bracket  $[\alpha, \beta]=\alpha.\beta-\beta.\alpha$, which we shall call the Clifford Lie bracket. 
For $\alpha\in \wedge V^*$, the adjoint representation $\mathrm{ad}_\alpha:\wedge V^*\to\wedge V^*$ is defined by
\[\mathrm{ad}_\alpha(\omega)=[\alpha,\omega].\]
\begin{proposition}\label{ad-rho}
Let $\alpha\in \wedge^2 V^*$ and $\omega\in \wedge V^*$, then one has
\begin{equation}
\mathrm{ad}_\alpha(\omega)=2\rho(\alpha)\omega=2\sum_{i=1}^ni_{e_i}\alpha\wedge i_{e_i}\omega,
\end{equation}
where $\rho(\alpha)$ is the transformation defined in the above section, $\iota$ denotes the interior product and $(e_i)$ is any orthonormal basis of $V$.
\end{proposition}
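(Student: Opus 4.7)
The plan is to establish both equalities of the statement separately: first, as operators on $\wedge V^{*}$,
$$\rho(\alpha)\,\omega \;=\; \sum_{k=1}^{n}\iota_{e_{k}}\alpha \wedge \iota_{e_{k}}\omega \;=:\; D_{\alpha}(\omega),$$
and second, $\mathrm{ad}_{\alpha}=2\rho(\alpha)$ for the Clifford commutator. The underlying idea in both steps is that both sides act by derivations on $(\wedge V^{*},\wedge)$ and agree on the generators $V^{*}$, so uniqueness of derivation extensions forces equality.

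For the identity $\rho(\alpha)=D_{\alpha}$, the right-hand side is manifestly independent of the orthonormal basis. I would verify directly that $D_{\alpha}$ is a derivation of the wedge product: applying the standard Leibniz rule to $\iota_{e_{k}}(\omega_{1}\wedge \omega_{2})$ and using that each $\iota_{e_{k}}\alpha$ is a $1$-form (so commutes with $\omega_{1}$ up to the sign $(-1)^{|\omega_{1}|}$), the two signs combine to yield $D_{\alpha}(\omega_{1}\wedge \omega_{2})=D_{\alpha}(\omega_{1})\wedge \omega_{2}+\omega_{1}\wedge D_{\alpha}(\omega_{2})$. On a generator $e^{a}\in V^{*}$ one has $D_{\alpha}(e^{a})=\iota_{e_{a}}\alpha$; for $\alpha=e^{i}\wedge e^{j}$ this equals $\delta_{ai}e^{j}-\delta_{aj}e^{i}$, matching the description of $\rho_{ij}$ on $V^{*}$ extracted from Proposition \ref{4parts-proposition}. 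Since $\rho(\alpha)$ is a derivation by construction, the two agree on all of $\wedge V^{*}$.

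For $\mathrm{ad}_{\alpha}=2\rho(\alpha)$, by linearity in $\alpha$ I reduce to $\alpha=e^{i}\wedge e^{j}$ with $i\neq j$, in which case $\alpha$ coincides with the Clifford product $e^{i}.e^{j}$. The Leibniz rule for commutators gives
$$\mathrm{ad}_{\alpha}(\omega) \;=\; [e^{i}.e^{j},\omega] \;=\; e^{i}.[e^{j},\omega] \;+\; [e^{i},\omega].e^{j}.$$
The identities $e.\omega=e\wedge\omega-\iota_{e}\omega$ and $\omega.e=(-1)^{|\omega|}(e\wedge\omega+\iota_{e}\omega)$ yield on a homogeneous $p$-form the closed form
$$[e^{k},\omega] \;=\; \bigl(1-(-1)^{p}\bigr)e^{k}\wedge\omega \;-\; \bigl(1+(-1)^{p}\bigr)\iota_{e_{k}}\omega,$$
i.e.\ $-2\iota_{e_{k}}\omega$ when $p$ is even and $2e^{k}\wedge\omega$ when $p$ is odd. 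Substituting into the Leibniz expression and treating the two parities of $p$ separately, the contributions of the form $e^{i}\wedge e^{j}\wedge\omega$, the $\iota_{e_{i}}\iota_{e_{j}}\omega$ terms (using $\iota_{e_{j}}\iota_{e_{i}}=-\iota_{e_{i}}\iota_{e_{j}}$), and the scalar $\delta_{ij}$ terms all cancel, leaving in both cases
$$\mathrm{ad}_{\alpha}(\omega) \;=\; 2\bigl(e^{j}\wedge\iota_{e_{i}}\omega-e^{i}\wedge\iota_{e_{j}}\omega\bigr) \;=\; 2\sum_{k=1}^{n}\iota_{e_{k}}(e^{i}\wedge e^{j})\wedge\iota_{e_{k}}\omega \;=\; 2D_{\alpha}(\omega),$$
which together with the first step proves the proposition.

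The main obstacle is the Clifford bookkeeping in the second step: one must track the parity-dependent signs coming from $\omega.e^{k}$ and verify that the unwanted higher and lower degree contributions produced by $e^{i}.[e^{j},\omega]$ and $[e^{i},\omega].e^{j}$ cancel symmetrically in each of the two parity cases. Once the sign conventions are laid out carefully the actual simplification is routine, and the final answer is visibly independent of parity, as it should be.
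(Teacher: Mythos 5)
Your proof is correct and follows essentially the same route as the paper: both reduce to $\alpha=e^i\wedge e^j$ and verify the identity by a direct Clifford-algebra computation, the paper by fully expanding $(e^i\wedge e^j).\omega$ and $\omega.(e^i\wedge e^j)$ and subtracting, you by the equivalent rearrangement $[e^i.e^j,\omega]=e^i.[e^j,\omega]+[e^i,\omega].e^j$ together with the parity-dependent closed form of $[e^k,\omega]$. Your derivation-uniqueness argument for the first equality $\rho(\alpha)\omega=\sum_{k}\iota_{e_k}\alpha\wedge\iota_{e_k}\omega$ (both sides are derivations of the wedge product agreeing on $V^*$) is a slightly more explicit justification than the paper's, which identifies the resulting expression $2\bigl(e^j\wedge\iota_{e_i}\omega-e^i\wedge\iota_{e_j}\omega\bigr)$ with $2\rho_{ij}(\omega)$ by appeal to an earlier proposition.
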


\begin{proof}
Let $\omega\in \wedge^pV^*$, then straightforward computations show that
\begin{equation*}
\begin{split}
(e^i\wedge e^j).\omega=&e^i.e^j.\omega+g(e_i,e_j)\omega\\
=& e^i.(e^j\wedge\omega-i_{e_j}\omega)+g(e_i,e_j)\omega\\
=& e^i.(e^j\wedge\omega)-e^i. i_{e_j}\omega+g(e_i,e_j)\omega\\
=& e^i\wedge e^j\wedge\omega-i_{e_i}(e^j\wedge \omega)-e^i\wedge i_{e_j}\omega +i_{e_i}i_{e_j}\omega+g(e_i,e_j)\omega\\
=& e^i\wedge e^j\wedge\omega+e^j\wedge i_{e_i}\omega-e^i\wedge i_{e_j}\omega+i_{e_j\wedge e_i}\omega 
+g(e_i,e_j)\omega.
\end{split}
\end{equation*}
On the other hand, one has
\begin{equation*}
\begin{split}
\omega.(e^i\wedge e^j)=&\omega.e^i.e^j+g(e_i,e_j)\omega\\
=& (-1)^p\bigl( e^i\wedge\omega+i_{e_i}\omega\bigr).e^j+g(e_i,e_j)\omega\\
=& (-1)^p( e^i\wedge\omega).e^j+(-1)^p(i_{e_i}\omega).e^j+g(e_i,e_j)\omega\\
=& (-1)^p(-1)^{p+1}\bigl[ e^j\wedge e^i\wedge\omega+i_{e_j}(e^i\wedge \omega)\bigr]\\
&\,\,\,\,\,\,\,\,\,\,\,\,\,\,\,\,\,\, + (-1)^p(-1)^{p-1}\bigl[e^j\wedge i_{e_i}\omega+i_{e_j}i_{e_i}\omega\bigr]+g(e_i,e_j)\omega\\
=& e^i\wedge e^j\wedge\omega+e^i\wedge i_{e_j}\omega-e^j\wedge i_{e_i}\omega-i_{e_i\wedge e_j}\omega 
+g(e_i,e_j)\omega.
\end{split}
\end{equation*}
Consequently, one has
\begin{equation*}
\begin{split}
\mathrm{ad}_{e^i\wedge e^j}\omega=&(e^i\wedge e^j).\omega-\omega.(e^i\wedge e^j)\\
=&2e^j\wedge i_{e_i}\omega-2e^i\wedge i_{e_j}\omega\\
=&-2i_{e_i}(e^j\wedge \omega)+2i_{e_j}(e^i\wedge \omega)=2\rho_{ij}(\omega).\\
\end{split}
\end{equation*}
This completes the proof of the first equality. The second equality follows by remarking that
\begin{equation*}
\sum_{k=1}^n i_{e_k}(e^i\wedge e^j)\wedge i_{e_k}\omega=e^j\wedge i_{e_i}\omega-e^i\wedge i_{e_j}\omega.
\end{equation*}
\end{proof}

The space of $(1,1)$-double forms $ V^*\otimes V^*$ is trivially a Lie algebra under the commutator bracket associated to the composition product of double forms, which we shall denote by $[.,.]_\circ$ and refer to it as the composition Lie bracket. Precisely, $[\omega_1, \omega_2]_\circ=\omega_1\circ \omega_2-\omega_2\circ \omega_1$. Since the map $\rho:\wedge^2V^*\to V^*\otimes V^*$ is bijective onto the subspace of skew symmetric $(1,1)$ double forms, one can then pull back the bracket to $\wedge^2V^*$ as follows
\[[\alpha_1,\alpha_2]_\circ :=\frac{1}{2}\rho^{-1}\bigl([\rho(\alpha_1),\rho(\alpha_2)]_\circ\bigr),\]
and hence make $(\wedge^2V^*, [.,.]_\circ)$ a Lie algebra. \\It turns out that the linear map $\mathrm{ad}:\wedge^2V^*\to \rm{End}(\wedge V^*)$ is a representation of the Lie algebra $(\wedge^2V^*, [.,.]_\circ)$. This can be easily proved using the above proposition as follows
\begin{equation*}
[\mathrm{ad}_\alpha,\, \mathrm{ad}_\beta]_{\circ}= 4[\rho(\alpha),\, \rho(\beta)]_{\circ}=2\rho([\alpha,\beta]_\circ)=\mathrm{ad}_{[\alpha,\beta]}.\\
\end{equation*}
The next proposition shows that the above two Lie brackets on $2$-forms coincides up to a factor
\begin{proposition}
Let $[.,.]$ and $[.,.]_\circ$ denote respectively the Clifford and composition Lie brackets on $\wedge^2V^*$, then one has
\begin{equation}
[\alpha,\beta]=4[\alpha,\beta]_\circ,
\end{equation}
for all $\alpha, \beta \in \wedge^2V^*$
\end{proposition}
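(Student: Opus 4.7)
The plan is to compare the two brackets by passing through the adjoint representation $\mathrm{ad}$ of the Clifford algebra and invoking Proposition~\ref{ad-rho}.

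First, since $(\wedge V^*,\cdot)$ is associative, its commutator bracket is a Lie bracket and $\mathrm{ad}$ is automatically a Lie algebra representation; in particular $\mathrm{ad}_{[\alpha,\beta]}=[\mathrm{ad}_\alpha,\mathrm{ad}_\beta]$ for all $\alpha,\beta\in\wedge V^*$. For $\alpha,\beta\in\wedge^2V^*$, the Clifford bracket $[\alpha,\beta]$ remains a $2$-form; this is the standard identification $\wedge^2V^*\cong\mathfrak{spin}(V)\subset\mathrm{Cl}(V,g)$, and it is also visible from the computation of $\mathrm{ad}_{e^i\wedge e^j}\omega$ at the end of the proof of Proposition~\ref{ad-rho}. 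Hence Proposition~\ref{ad-rho} applies both to $\alpha,\beta$ and to $[\alpha,\beta]$.

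Next I would rewrite every $\mathrm{ad}$ via the formula $\mathrm{ad}_\gamma=2\rho(\gamma)_d$ from Proposition~\ref{ad-rho}, where the subscript $d$ denotes extension by derivations to $\wedge V^*$. Because the commutator of two derivations of $\wedge V^*$ that come from endomorphisms of $V^*$ is itself the extension by derivations of the commutator of the underlying endomorphisms, this yields
\begin{equation*}
2\,\rho\bigl([\alpha,\beta]\bigr)_d \;=\; [\mathrm{ad}_\alpha,\mathrm{ad}_\beta] \;=\; 4\,\bigl[\rho(\alpha),\rho(\beta)\bigr]_{\circ,d}.
\end{equation*}
Since a derivation of $\wedge V^*$ is determined by its restriction to $V^*$, this identity descends to the $(1,1)$ double-form identity
\begin{equation*}
\rho\bigl([\alpha,\beta]\bigr)=2\bigl[\rho(\alpha),\rho(\beta)\bigr]_\circ.
\end{equation*}

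Finally, the defining relation $\rho([\alpha,\beta]_\circ)=\tfrac12[\rho(\alpha),\rho(\beta)]_\circ$ rewrites as $[\rho(\alpha),\rho(\beta)]_\circ=2\rho([\alpha,\beta]_\circ)$; substituting this into the previous display yields $\rho([\alpha,\beta])=4\rho([\alpha,\beta]_\circ)$, and the injectivity of $\rho:\wedge^2V^*\hookrightarrow V^*\otimes V^*$ then gives $[\alpha,\beta]=4[\alpha,\beta]_\circ$, as claimed. There is no real obstacle; the only point requiring care is the bookkeeping of the factor $2$ coming from Proposition~\ref{ad-rho} together with the factor $\tfrac12$ hidden in the definition of $[\,\cdot\,,\,\cdot\,]_\circ$ on $\wedge^2V^*$, whose combination produces the factor $4$ in the statement.
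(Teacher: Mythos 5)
Your proof is correct, and it reaches the conclusion by a genuinely different route from the paper's. The paper proves the key identity $[\rho(\alpha),\rho(\beta)]_\circ=\tfrac12\rho([\alpha,\beta])$ by a direct orthonormal-basis computation: it expands $[\rho(\alpha),\rho(\beta)]_\circ(a,b)$ from the definition of the composition product, recognizes the result as $\sum_i\rho\bigl(i_{e_i}\alpha\wedge i_{e_i}\beta\bigr)(a,b)$, and only then invokes the second formula of Proposition~\ref{ad-rho} to identify $\sum_i i_{e_i}\alpha\wedge i_{e_i}\beta$ with $\tfrac12[\alpha,\beta]$; unwinding the definition of $[\cdot,\cdot]_\circ$ gives the factor $4$. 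You obtain the same identity structurally, from the automatic homomorphism property $\mathrm{ad}_{[\alpha,\beta]}=[\mathrm{ad}_\alpha,\mathrm{ad}_\beta]$ of the commutator bracket in an associative algebra, the first formula $\mathrm{ad}_\gamma=2\rho(\gamma)$ of Proposition~\ref{ad-rho}, and the fact that extension by derivations intertwines commutators and is determined by its restriction to $V^*$. Your route needs one extra input that the paper's computation delivers for free, namely that $[\alpha,\beta]$ remains in $\wedge^2V^*$ so that Proposition~\ref{ad-rho} applies to it; you address this adequately by pointing to the computation of $\mathrm{ad}_{e^i\wedge e^j}\omega$ (which shows $\mathrm{ad}_{e^i\wedge e^j}$ preserves degree). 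What your argument buys is a transparent accounting of the constant --- the $4$ is exactly the two factors of $2$ from $\mathrm{ad}_\alpha$ and $\mathrm{ad}_\beta$ against the single factor of $2$ from $\mathrm{ad}_{[\alpha,\beta]}$, combined with the $\tfrac12$ built into the definition of $[\cdot,\cdot]_\circ$ --- together with independence from index manipulations; what the paper's computation buys is an explicit check of the composition-product conventions (order of composition, skew-symmetry of $\rho(\alpha)$) without relying on the identification of $(1,1)$ double forms with operators.
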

\begin{proof}
For arbitrary vectors $a,b\in V$ and orthonormal basis $(e_i)$  of $V$, one has using the definition of the composition product of double forms
\begin{equation*}
\begin{split}
[\rho(\alpha),\rho(\beta)]_\circ(a,b)=& \rho(\alpha)\circ\rho(\beta)(a,b)-\rho(\beta)\circ\rho(\alpha)(a,b)\\
=& \sum_{i=1}^n\Bigl( \rho(\beta)(a,e_i)\rho(\alpha)(e_i,b)-\rho(\alpha)(a,e_i)\rho(\beta)(e_i,b)\Bigr)\\
=& \sum_{i=1}^n \Bigl(\beta(a,e_i)\alpha(e_i,b)-\alpha(a,e_i)\beta(e_i,b)\Bigr)\\
=& -\sum_{i=1}^n\Bigl( i_{e_i}\beta(a)i_{e_i}\alpha(b)-i_{e_i}\alpha(a)i_{e_i}\beta(b)\Bigr)\\
=& \sum_{i=1}^n \rho(i_{e_i}\alpha\wedge i_{e_i}\beta)(a,b)=\frac{1}{2}\rho([\alpha,\beta])(a,b).
\end{split}
\end{equation*}
\end{proof}

\subsection{Sharp product of double forms}
Recall that the Clifford Lie bracket $[.,.]$ in $\wedge V^*$ is given by the Clifford product of forms as follows $[\alpha,\beta]=\alpha.\beta-\beta.\alpha.$ This makes $(\wedge V^*,[.,.])$ a Lie algebra. The sharp product $\#$ is by definition the natural product in the algebra tensor product $\wedge V^*\otimes\wedge V^*$. Precisely, we define
\begin{definition}
For two simple double forms $\alpha_1\otimes \alpha_2$ and $\beta_1\otimes \beta_2$ we define
\begin{equation}
(\alpha_1\otimes \alpha_2) \# (\beta_1\otimes \beta_2):=[\alpha_1,\beta_1]\otimes [\alpha_2,\beta_2].
\end{equation}
By assuming bilinearity, this can be extended to a product on the space of double forms.
\end{definition}
This product is clearly symmetric and generalizes the sharp product that arises in the evolution equation of the Riemann curvature tensor under the Ricci flow, see corollary \ref{sharp-use} below. This product was first defined by B\"ohm and Wilking for $(2,2)$ double forms in \cite{BW}, see also \cite{Shmidt}.\\
We remark from the definition that the sharp product results in a zero double form if one of the factors is a $(p,0)$ or a $(0,q)$ double form.
\begin{proposition}\label{sharp-forms}
\begin{enumerate}
\item Let $h$ be a $(1,1)$ double form and $\omega$ be a $(p,p)$ double form.
\begin{itemize}
\item[a)] If $p$ is even then $h\# \omega=4i_{h}\omega$ is the interior product of the $\omega$ with  $h$.
\item[b)] If $p$ is odd, then $h\# \omega=4h\omega$ is the exterior product of the two double forms.
\end{itemize}
\item If $\alpha=\alpha_1\otimes \alpha_2$ is a simple $(2,2)$ double form and $\omega$ be a $(p,q)$ double form then (with the convention that $g^{-1}=0$)
\begin{equation}\label{sharp-composition}
\alpha \#\omega =-4\frac{g^{q-1}\rho(\alpha_2)}{(q-1)!}\circ\omega\circ \frac{g^{p-1}\rho(\alpha_1)}{(p-1)!}.
\end{equation}
\item For a $(2,2)$ double form  $R$  and a $(p,q)$ double form $\omega$ one has
\begin{equation}\label{sharp-interior}
R \# \omega =4\sum_{j,k=1}^n (i_{e_j\otimes e_k}R).(i_{e_j\otimes e_k}\omega),
\end{equation}
where the dot in the last expression is the exterior product of the two double forms and $(e_i)$ is an orthonormal basis of $V$.
\end{enumerate}
\end{proposition}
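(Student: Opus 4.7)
The plan is to prove each of the three parts by direct expansion from the definition of $\#$, using the Clifford bracket identities of Proposition \ref{ad-rho} and the composition-product extension formulas of Proposition \ref{ext-formulas}. Bilinearity reduces everything to simple double forms throughout.

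For Part (1), I take $h=h_1\otimes h_2$ with $h_i\in V^*$ and a simple $\omega=\omega_1\otimes\omega_2$ with $\omega_i\in\wedge^p V^*$. Subtracting the two Clifford product formulas $e.\alpha=e\wedge\alpha-\iota_e\alpha$ and $\alpha.e=(-1)^p(e\wedge\alpha+\iota_e\alpha)$ in $\wedge V^*$ yields
\[ [e,\alpha]=(1-(-1)^p)\,e\wedge\alpha-(1+(-1)^p)\,\iota_e\alpha, \]
which equals $-2\iota_e\alpha$ when $p$ is even and $2\,e\wedge\alpha$ when $p$ is odd. Substituting into $h\#\omega=[h_1,\omega_1]\otimes[h_2,\omega_2]$ and using $(\iota_{h_1}\omega_1)\otimes(\iota_{h_2}\omega_2)=\iota_h\omega$ (for $p$ even) or $(h_1\wedge\omega_1)\otimes(h_2\wedge\omega_2)=h\cdot\omega$ (for $p$ odd) delivers the two cases, and bilinearity extends the identity to arbitrary $h$ and $\omega$.

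For Part (2), write $\alpha=\alpha_1\otimes\alpha_2$ and a simple $\omega=\omega_1\otimes\omega_2$ of bidegree $(p,q)$. By definition $\alpha\#\omega=[\alpha_1,\omega_1]\otimes[\alpha_2,\omega_2]$, and Proposition \ref{ad-rho} identifies each Clifford bracket with twice the derivation action of $\rho(\alpha_i)$ on $\wedge V^*$. Since $\rho(\alpha_i)$ is skew-symmetric as a $(1,1)$ double form, i.e.\ $\rho(\alpha_i)^t=-\rho(\alpha_i)$, applying Proposition \ref{ext-formulas} to the left and right extensions on the exterior algebra of double forms gives
\[ 2\rho(\alpha_1)(\omega_1)\otimes\omega_2=-2\,\omega\circ\frac{g^{p-1}\rho(\alpha_1)}{(p-1)!},\qquad \omega_1\otimes 2\rho(\alpha_2)(\omega_2)=2\,\frac{g^{q-1}\rho(\alpha_2)}{(q-1)!}\circ\omega. \]
These two operators act on disjoint tensor factors and hence commute; composing them and carrying through the signs produces the stated formula with net coefficient $-4$. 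Bilinearity in $\alpha$ then extends the result to arbitrary (non-simple) $(2,2)$ double forms.

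For Part (3), by bilinearity in both arguments it suffices to prove the identity when $R=\beta\otimes\gamma$ and $\omega=\omega_1\otimes\omega_2$ are simple. Using the second formulation of Proposition \ref{ad-rho}, namely $[\zeta,\eta]=2\sum_i \iota_{e_i}\zeta\wedge\iota_{e_i}\eta$ for $\zeta\in\wedge^2 V^*$, I compute
\[ R\#\omega=[\beta,\omega_1]\otimes[\gamma,\omega_2]=4\sum_{j,k}(\iota_{e_j}\beta\wedge\iota_{e_j}\omega_1)\otimes(\iota_{e_k}\gamma\wedge\iota_{e_k}\omega_2). \]
Reorganizing each summand by means of the identity $(a_1\wedge a_2)\otimes(b_1\wedge b_2)=(a_1\otimes b_1)\cdot(a_2\otimes b_2)$ for the exterior product of double forms, and recognizing $\iota_{e_j}\beta\otimes\iota_{e_k}\gamma=\iota_{e_j\otimes e_k}R$ together with $(\iota_{e_j}\omega_1)\otimes(\iota_{e_k}\omega_2)=\iota_{e_j\otimes e_k}\omega$, yields the claimed identity; the double sum is manifestly independent of the chosen orthonormal basis.

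The main obstacle will be the sign bookkeeping in Part (2): the left-extension formula of Proposition \ref{ext-formulas} involves $h^t$ while the right-extension involves $h$, and the skew-symmetry of $\rho(\alpha_1)$ produces exactly one sign flip which, combined with the two factors of $2$ from converting Clifford brackets to $\rho$-actions via Proposition \ref{ad-rho}, gives the net coefficient $-4$. Once these signs are tracked correctly, all three parts reduce to direct algebraic manipulation.
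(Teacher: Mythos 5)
Your proposal is correct and follows essentially the same route as the paper: part (1) by computing $[e,\alpha]$ directly from the two Clifford product formulas, part (2) by combining Proposition \ref{ad-rho} with the left/right extension formulas of Proposition \ref{ext-formulas} (with the single sign flip coming from $\rho(\alpha_1)^t=-\rho(\alpha_1)$), and part (3) via the second identity in Proposition \ref{ad-rho}. The sign bookkeeping you flag is handled exactly as in the paper's argument, so nothing further is needed.
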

\begin{proof}
For $e\in V^*$, we first remark that
\[ [e,\omega]=e.\omega-\omega.e=e\wedge \omega-i_{e}\omega-(-1)^p(e\wedge \omega+i_{e}\omega).\]
That is equal to $-2i_{e}\omega$ if $p$ is even and equal to $2e\wedge \omega$ if $p$ is odd.\\
Without loss of generality we suppose $h=e^1\otimes e^2$ and $\omega=\theta^1\otimes \theta^2$, then if $p$ is even one has
\[h\# \omega=[e^1,\theta^1]\otimes [e^2,\theta^2]=4i_{e_1}\theta^1\otimes i_{e_2}\theta^2=4i_{h}\omega.\]
The case $p$ odd can be proved in a similar way.\\
For the second part, let $\alpha=\alpha_1\otimes\alpha_2$ and without loss of generality let $\omega=\omega_1\otimes \omega_2$. Using Proposition \ref{ad-rho} and then Proposition \ref{ext-formulas} one can easily see that
\begin{equation*}
\begin{split}
\alpha\# \omega=&\mathrm{ad}_{\alpha_1}\omega_1\otimes \mathrm{ad}_{\alpha_2}\omega_2\\
=&4\rho(\alpha_1)\omega_1\otimes \rho(\alpha_2)\omega_2=(\rho(\alpha_1))_\ell (\rho(\alpha_2))_r\omega_1\otimes \omega_2\\
=& 4\frac{g^{q-1}\rho(\alpha_2)}{(q-1)!}\circ \omega \circ \Bigl(\frac{g^{p-1}\rho(\alpha_2)}{(p-1)!}\Bigr)^t=-4\frac{g^{q-1}\rho(\alpha_2)}{(q-1)!}\circ \omega \circ \frac{g^{p-1}\rho(\alpha_2)}{(p-1)!}.
\end{split}
\end{equation*}
Finally, we shall prove the last part using the second equality in Proposition \ref{ad-rho}. Without loss of generality assume $R=\alpha_1\otimes \alpha_2$ and $\omega=\omega_1\otimes \omega_2$ then
\begin{equation*}
\begin{split}
[R,\omega]=& \mathrm{ad}_{\alpha_1}\omega_1\otimes \mathrm{ad}_{\alpha_2}\omega_2\\
=&4\sum_{j,k=1}^n (i_{e_j}\alpha_1 \wedge i_{e_j}\omega_1)\otimes (i_{e_k}\alpha_2 \wedge i_{e_k}\omega_2)\\
=& 4\sum_{j,k=1}^n (i_{e_j}\alpha_1 \otimes i_{e_k}\alpha_2).( i_{e_j}\omega_1\otimes i_{e_k}\omega_2 )=4\sum_{j,k=1}^n (i_{e_j\otimes e_k}R).(i_{e_j\otimes e_k}\omega).
\end{split}
\end{equation*}
\end{proof}
\begin{corollary}\label{g-square-sharp}
Let $\omega$ be a $(p,q)$ double form and let $\mathfrak{S}$ and $\widetilde{\mathfrak{S}}$ respectively denote the first Bianchi sum and its adjoint sum. Then the following holds
\begin{equation}
\frac{1}{2}g^2\# \omega=4g.\cc {\omega}-4q\omega+4\widetilde{\mathfrak{S}}\mathfrak{S}\omega.
\end{equation}
\end{corollary}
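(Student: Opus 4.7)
The strategy is to apply the interior-product formula (\ref{sharp-interior}) in Proposition \ref{sharp-forms} with the $(2,2)$ double form $R=g^2$, reduce the resulting sum to two pieces, and then recognize one piece as $g\cdot\cc\omega$ and the other, after a commutation, as $-q\omega+\widetilde{\mathfrak{S}}\mathfrak{S}\omega$.

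First I would compute $\iota_{e_j\otimes e_k}(g^2)$ in a fixed orthonormal basis. Writing $g=\sum_i e^i\otimes e^i$ and expanding $g^2=\sum_{i,l}(e^i\wedge e^l)\otimes(e^i\wedge e^l)$, a direct application of the Leibniz rule for $\iota_{e_j}$ on $2$-forms yields
\[
\iota_{e_j\otimes e_k}(g^2)=2\delta_{jk}\,g-2\,e^k\otimes e^j.
\]
Plugging this into (\ref{sharp-interior}) gives
\[
g^2\#\omega=8\sum_j g\cdot \iota_{e_j\otimes e_j}\omega-8\sum_{j,k}(e^k\otimes e^j)\cdot \iota_{e_j\otimes e_k}\omega.
\]
Since $\iota_g=\cc$, the first sum equals $8\,g\cdot\cc\omega$.

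The main step is to identify the second sum in terms of the Bianchi sums. Using the definitions $\mathfrak{S}\omega=\sum_j(e^j\otimes 1)\iota_{1\otimes e_j}\omega$ and $\widetilde{\mathfrak{S}}\omega=\sum_k(1\otimes e^k)\iota_{e_k\otimes 1}\omega$, I would compute
\[
\widetilde{\mathfrak{S}}\mathfrak{S}\omega=\sum_{j,k}(1\otimes e^k)\,\iota_{e_k\otimes 1}\!\left[(e^j\otimes 1)\,\iota_{1\otimes e_j}\omega\right].
\]
The first identity in (\ref{2identities}) lets me push $\iota_{e_k\otimes 1}$ past the left multiplication by $e^j\otimes 1$, producing a $\delta_{jk}$ term and a commuted term. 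The $\delta_{jk}$ term gives $\sum_k(1\otimes e^k)\iota_{1\otimes e_k}\omega$, which by the second identity in (\ref{2identities}) (or by direct Leibniz expansion on the right factor of degree $q$) equals $q\,\omega$. The remaining term, after the commutation $(1\otimes e^k)(e^j\otimes 1)=e^j\otimes e^k$ and the composition $\iota_{e_k\otimes 1}\iota_{1\otimes e_j}=\iota_{e_k\otimes e_j}$, contributes exactly $-\sum_{j,k}(e^j\otimes e^k)\iota_{e_k\otimes e_j}\omega$. After relabeling $(j,k)\leftrightarrow (k,j)$, this last sum is precisely the one appearing in $g^2\#\omega$ above. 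Hence
\[
\sum_{j,k}(e^k\otimes e^j)\cdot \iota_{e_j\otimes e_k}\omega=q\omega-\widetilde{\mathfrak{S}}\mathfrak{S}\omega,
\]
and substituting gives $g^2\#\omega=8\,g\cdot\cc\omega-8q\omega+8\widetilde{\mathfrak{S}}\mathfrak{S}\omega$; dividing by $2$ yields the claim.

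The only delicate point is bookkeeping: keeping the left/right interior products straight and handling the relabeling of summation indices. All the underlying ingredients (the interior-product formula for $\#$, the definitions of $\mathfrak{S}$ and $\widetilde{\mathfrak{S}}$, and the commutation identities (\ref{2identities})) are already established, so once the indices are aligned the verification is an application of Leibniz with careful accounting of sign-free right/left factors.
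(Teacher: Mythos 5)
Your proposal is correct and follows essentially the same route as the paper: compute $\iota_{e_j\otimes e_k}(g^2)=2\delta_{jk}g-2e^k\otimes e^j$, substitute into formula (\ref{sharp-interior}), read off the diagonal part as $g\cdot\cc\omega$, and identify the remaining double sum with $q\omega-\widetilde{\mathfrak{S}}\mathfrak{S}\omega$ via the commutation identities (\ref{2identities}). The only cosmetic difference is that the paper evaluates $\widetilde{\mathfrak{S}}\mathfrak{S}\omega$ by passing through transposes, whereas you apply the first identity of (\ref{2identities}) directly; both yield the same expression.
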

\begin{proof}
\[
\frac{1}{2}g^2\# \omega=2\sum_{j,k=1}^n (i_{e_j\otimes e_k}g^2).(i_{e_j\otimes e_k}\omega)=2\sum_{j=1}^n (i_{e_j\otimes e_j}g^2).(i_{e_j\otimes e_j}\omega)+2\sum_{j\not=k}^n (i_{e_j\otimes e_k}g^2).(i_{e_j\otimes e_k}\omega).
\]
One can see without difficulties that 
\[  i_{e_j\otimes e_k}g^2= \left\{
\begin{array}{ll}
      -2e^k\otimes e^j &{\rm if}\,\,  j\not= k \\
      2g-2e^j\otimes e^j & {\rm if}\,\,  j=k \\
\end{array} 
\right. \]
Consequently, we have 
\[
\begin{split}
\frac{1}{2}g^2\# \omega=&2\sum_{j=1}^n\bigl[ 2g.i_{e_j\otimes e_j}\omega-2(e^j\otimes e^j). i_{e_j\otimes e_j}\omega\bigr]
+2\sum_{j\not=k}^n  -2(e^k\otimes e^j).(i_{e_j\otimes e_k}\omega)\\
=&4g.\cc \omega -4 \sum_{j,k}^n (e^k\otimes e^j).(i_{e_j\otimes e_k}\omega).
\end{split}
\]
Recall that the first Bianchi sum and its adjoint are given by (see Proposition 3.2 and Remark 3.2 in \cite{Labbi-JAUMS})
\[ \mathfrak{S}\omega=\sum_{i=1}^n (e^i\otimes 1).i_{1\otimes e_i}\omega,\,\,\, {\rm and}\,\,\, \widetilde{\mathfrak{S}}\omega=\bigl(\mathfrak{S}(\omega^t)\bigr)^t
=\sum_{i=1}^n i_{e_i\otimes 1}\bigl((1\otimes e^i)\omega\bigr).
\]
Using the properties of the transpose, see section \ref{transpose},  and formula (\ref{2identities}) for the  interior product of double forms, one can show that
\[
\begin{split}
\widetilde{\mathfrak{S}}\mathfrak{S}\omega=& \widetilde{\mathfrak{S}}\Bigl[ \sum_{i=1}^n (e^i\otimes 1).i_{1\otimes e_i}\omega\Bigr]=
\Bigl({\mathfrak{S}}\Bigl[ \sum_{i=1}^n (e^i\otimes 1).i_{1\otimes e_i}\omega\Bigr]^t\Bigr)^t\\
=& \Bigl({\mathfrak{S}}\Bigl[ \sum_{i=1}^n (1\otimes e^i).i_{e_i\otimes 1}\omega^t\Bigr]\Bigr)^t
=\Bigl( \sum_{i,j=1}^n (e^j\otimes 1). i_{1\otimes e_j}\bigl[ (1\otimes e^i).i_{e_i\otimes 1}\omega^t\bigr]\Bigr)^t\\
=& \Bigl( \sum_{i=1}^n (e^i\otimes 1).i_{e_i\otimes 1}\omega^t-\sum_{i,j=1}^n (e^j\otimes e^i). i_{e_i\otimes e_j}\omega^t\Bigr)^t\\
&=  \sum_{i=1}^n (1\otimes e^i).i_{1\otimes e_i}\omega-\sum_{i,j=1}^n (e^i\otimes e^j). i_{e_j\otimes e_i}\omega=q\omega-\sum_{i,j=1}^n (e^i\otimes e^j). i_{e_j\otimes e_i}\omega.
\end{split}
\]
\end{proof}

\section{Hodge-de Rham Laplacian on double forms and Weitzenb\"ock formula}
Let $(M,g)$ be a smooth Riemannian  $n$-manifold and, $\nabla$ is its Levi-Civita connection and $X$ be a vector field on $M$. The covariant differential of $X$ is the endomorphism  $\nabla X$ defined by
$$\nabla X(Y)=\nabla_YX.$$
We assume the Leibniz rule to define the covariant of more general tensors. In particular, for a $(p,q)$-double form $\omega$, its covariant differential $\nabla \omega$, is defined to be the $(0,p+q+1)$ tensor given by
\begin{equation*}
\begin{split}
\nabla\omega(X,Y_1,...,Y_p,Z_1,...,Z_q):=&(\nabla_X\omega)(Y_1,...,Y_p,Z_1,...,Z_q)\\
=& \nabla_X\Bigl[ \omega(Y_1,...,Y_p,Z_1,...,Z_q)\Bigr]-\omega(\nabla_xY_1,Y_2,...,Y_p,Z_1,...,Z_q) \\
&  -\omega(Y_1, \nabla_xY_2,...,Y_p,Z_1,...,Z_q)-...-\omega(Y_1, Y_2,...,Y_p,Z_1,...,\nabla_X Z_q).
\end{split}
\end{equation*}
\begin{proposition}
Let $x\in M$ and $(e_i)$ be a basis of the tangent space $T_xM$. Let $(e^i)$ denotes the dual basis, then for a double form $\omega$ the following holds at  $x$ 
\begin{equation}
\nabla\omega =\sum_{i=1}^ne^i\otimes \nabla_{e_i}\omega.
\end{equation}
\end{proposition}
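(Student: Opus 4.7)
The identity is essentially the coordinate-expansion statement for a tensor in the first slot, combined with the tensoriality (i.e.\ $C^\infty(M)$-linearity) of the covariant derivative in the direction argument. The plan is as follows.

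First, I would unwind the definitions. By the definition of $\nabla\omega$ given just above the proposition, $\nabla\omega$ is the $(0,p+q+1)$-tensor characterized by
\[\nabla\omega(X,Y_1,\dots,Y_p,Z_1,\dots,Z_q)=(\nabla_X\omega)(Y_1,\dots,Y_p,Z_1,\dots,Z_q).\]
Thus $\nabla\omega$ is built so that the first slot records the differentiation direction, and the remaining $p+q$ slots record the arguments of the double form. On the other hand, the right-hand side $\sum_i e^i\otimes \nabla_{e_i}\omega$, evaluated at $(X,Y_1,\dots,Y_p,Z_1,\dots,Z_q)$, produces by the definition of tensor product
\[\sum_{i=1}^n e^i(X)\,(\nabla_{e_i}\omega)(Y_1,\dots,Y_p,Z_1,\dots,Z_q).\]

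Second, I would invoke the $C^\infty(M)$-linearity of $\nabla_X$ in $X$. Since the Levi-Civita connection satisfies $\nabla_{fX+gY}=f\nabla_X+g\nabla_Y$ for smooth functions $f,g$ (and this property is inherited by the extension of $\nabla$ to arbitrary tensor fields via the Leibniz rule), the map $X\mapsto \nabla_X\omega$ is tensorial at the point $x$. Using the pointwise expansion $X=\sum_i e^i(X)\,e_i$ valid for the dual bases $(e_i),(e^i)$, one obtains at $x$
\[\nabla_X\omega=\nabla_{\sum_i e^i(X)\,e_i}\omega=\sum_{i=1}^n e^i(X)\,\nabla_{e_i}\omega.\]

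Finally, plugging this into the definition of $\nabla\omega$ yields
\[\nabla\omega(X,Y_1,\dots,Z_q)=\sum_{i=1}^n e^i(X)\,(\nabla_{e_i}\omega)(Y_1,\dots,Z_q)=\Bigl(\sum_{i=1}^n e^i\otimes \nabla_{e_i}\omega\Bigr)(X,Y_1,\dots,Z_q).\]
Since $X,Y_1,\dots,Z_q$ were arbitrary, the two tensors agree at $x$, which is the desired identity. The only subtle point worth emphasizing in the write-up is that the identity is a pointwise equality, and that it does not require $(e_i)$ to be orthonormal, only that $(e^i)$ is taken to be its dual basis; there is no genuine obstacle to the argument.
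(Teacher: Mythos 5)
Your proposal is correct and follows essentially the same route as the paper: evaluate both sides on arbitrary tangent vectors, expand $X=\sum_i e^i(X)e_i$, and use the $C^\infty(M)$-linearity of $\nabla_X\omega$ in $X$ to identify the two tensors pointwise. The paper's proof is exactly this computation, written as a single chain of equalities.
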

\begin{proof}
Let $X,Y_1,...,Y_p,Z_1,...,Z_q$ be tangent vectors then 
\begin{equation*}
\begin{split}
\sum_{i=1}^ne^i\otimes \nabla_{e_i}&\omega(X,Y_1,...,Y_p,Z_1,...,Z_q)=\sum_{i=1}^ne^i(X) \nabla_{e_i}\omega(Y_1,...,Y_p,Z_1,...,Z_q)\\
&=\nabla_{\sum_{i=1}^ne^i(X)  e_i}\omega(Y_1,...,Y_p,Z_1,...,Z_q)=\nabla_X\omega(Y_1,...,Y_p,Z_1,...,Z_q)\\
&=\nabla\omega(X,Y_1,...,Y_p,Z_1,...,Z_q).
\end{split}
\end{equation*}
\end{proof}
The second Bianchi sum of a $(p,q)$ double form $\omega$, denoted by $D\omega$, is the  $(p+1,q)$ double form obtained by alternating the first $(p+1)$ arguments of the covariant differential $\nabla\omega$, precisely it is given by
\begin{equation*}
\begin{split}
D\omega(Y_0,Y_1,...,Y_p,Z_1,...,Z_q)=&{\rm alt}_{p+1}\nabla\omega(Y_0,Y_1,...,Y_p,Z_1,...,Z_q)\\
&=\sum_{i=0}^p(-1)^i  \nabla_{Y_i}\omega(Y_1,...,\hat{Y_i},...,Y_p,Z_1,...,Z_q).
\end{split}
\end{equation*}
Note that $D\omega$ coincides with the exterior derivative of  $\omega$ once seen as a vector valued $p$-form. Note that we are using here the opposite sign of the convention used in \cite{Kulkarni, Labbi-variational} for $D$.

\begin{proposition}
Let $x\in m$ and $(e_i)$ be a basis of the tangent space $T_xM$. Let $(e^i)$ denotes the dual basis, then for a double form $\omega$ the following holds at  $x$ 
\begin{equation}\label{nabla}
D\omega =\sum_{i=1}^n(e^i\otimes 1). \nabla_{e_i}\omega,
\end{equation}
where the above product is the exterior product of double forms.
\end{proposition}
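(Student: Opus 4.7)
The plan is to combine the previous proposition (which expresses $\nabla\omega$ as $\sum_i e^i \otimes \nabla_{e_i}\omega$) with the very definition of $D$ as the alternation of the first $p+1$ arguments of $\nabla\omega$. Since the right-hand side of the claimed formula $\sum_i (e^i \otimes 1).\nabla_{e_i}\omega$ is, by the definition of the exterior product of double forms, just $\sum_i (e^i \wedge \alpha_i) \otimes \beta_i$ whenever $\nabla_{e_i}\omega = \sum \alpha_i \otimes \beta_i$ with $\alpha_i \in \Lambda^p$, $\beta_i \in \Lambda^q$, the proof reduces to a direct verification after evaluating both sides on tangent vectors.

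First I would fix $p+q$ tangent vectors $Y_0,\dots,Y_p,Z_1,\dots,Z_q$ at $x$. Evaluating the right-hand side and using the standard formula
\[
(\alpha \wedge \gamma)(Y_0,\dots,Y_p) = \sum_{k=0}^{p} (-1)^k \alpha(Y_k)\, \gamma(Y_0,\dots,\hat{Y_k},\dots,Y_p)
\]
for a $1$-form $\alpha$ and a $p$-form $\gamma$ (applied slot-wise to the first factor of each simple summand of $\nabla_{e_i}\omega$) yields
\[
\sum_{i=1}^n (e^i \otimes 1).\nabla_{e_i}\omega \,(Y_0,\dots,Y_p,Z_1,\dots,Z_q)
= \sum_{k=0}^p (-1)^k \sum_{i=1}^n e^i(Y_k)\, \nabla_{e_i}\omega(Y_0,\dots,\hat{Y_k},\dots,Y_p,Z_1,\dots,Z_q).
\]
The inner sum $\sum_i e^i(Y_k)\, \nabla_{e_i}\omega$ collapses to $\nabla_{Y_k}\omega$ by the duality $\sum_i e^i(Y_k) e_i = Y_k$ and the tensoriality of $\nabla_\bullet\omega$ in its direction argument.

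What remains is exactly
\[
\sum_{k=0}^p (-1)^k \nabla_{Y_k}\omega(Y_0,\dots,\hat{Y_k},\dots,Y_p,Z_1,\dots,Z_q),
\]
which is the definition of $D\omega(Y_0,\dots,Y_p,Z_1,\dots,Z_q)$ given just above. Since the vectors were arbitrary, the identity $D\omega = \sum_i (e^i \otimes 1).\nabla_{e_i}\omega$ holds at $x$.

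There is no real obstacle in this argument; the only point requiring care is keeping the conventions straight, namely that the exterior product of double forms wedges the left factors together and the right factors together (so $(e^i \otimes 1).\nabla_{e_i}\omega$ only affects the first $p$ slots of $\nabla_{e_i}\omega$, leaving the $q$ right slots untouched, which is precisely what the alternation $\mathrm{alt}_{p+1}$ in the definition of $D$ requires).
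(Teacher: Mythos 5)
Your proof is correct and follows essentially the same route as the paper's: both combine the formula $\nabla\omega=\sum_i e^i\otimes\nabla_{e_i}\omega$ with the definition of $D$ as the alternation $\mathrm{alt}_{p+1}$ of $\nabla\omega$, the only difference being that the paper carries out the computation on simple double forms $\theta_1\otimes\theta_2$ and applies the alternation operator abstractly, whereas you evaluate both sides on tangent vectors and collapse $\sum_i e^i(Y_k)\nabla_{e_i}\omega=\nabla_{Y_k}\omega$ by tensoriality. Your closing remark about the exterior product only wedging the left factors is exactly the point the paper's computation also hinges on.
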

\begin{proof}
Without loss of generality, we may assume that $\omega=\theta_1\otimes \theta_2$ is simple, then one has
\begin{equation*}
\begin{split}
\nabla\omega=&\sum_{i=1}^n e^i\otimes \nabla_{e_i}\omega =\sum_{i=1}^n e^i \otimes \bigl(\nabla_{e_i}\theta_1\otimes \theta_2+\theta_1\otimes  
\nabla_{e_i}\theta_2\bigr)\\
=&\sum_{i=1}^ne^i \otimes \nabla_{e_i}\theta_1\otimes \theta_2+\sum_{i=1}^ne^i \otimes \theta_1\otimes  \nabla_{e_i}\theta_2.
\end{split}
\end{equation*}
Now we apply the alternating operator to the first $(p+1)$ factors to get
\begin{equation*}
\begin{split}
D\omega=&{\rm alt}_{p+1}\nabla\omega=\sum_{i=1}^n(e^i \wedge \nabla_{e_i}\theta_1)\otimes \theta_2+\sum_{i=1}^n(e^i \wedge \theta_1)\otimes  \nabla_{e_i}\theta_2\\
=&\sum_{i=1}^n(e^i \otimes 1). \bigl(\nabla_{e_i}\theta_1\otimes \theta_2+\theta_1\otimes  
\nabla_{e_i}\theta_2\bigr)=\sum_{i=1}^n(e^i\otimes 1). \nabla_{e_i}\omega.
\end{split}
\end{equation*}

\end{proof}

Let now $(e_i)$ denotes a locally defined orthonormal frame. The divergence $\delta\omega$ of a $(p,q)$ double form $\omega$ is the $(p-1,q)$ double form given by
\begin{equation*}
\begin{split}
\delta\omega(Y_1,...,Y_{p-1},Z_1,...,Z_q)&:=-\sum_{i=1}^n(\nabla_{e_i}\omega)(e_i,Y_1,...,Y_{p-1},Z_1,...,Z_q)\\
&=-\sum_{i=1}^n\bigl(i_{e_i\otimes 1}\nabla_{e_i}\omega\bigr)(Y_1,...,Y_{p-1},Z_1,...,Z_q).
\end{split}
\end{equation*}
In particular, one has
\begin{equation}\label{delta}
\delta\omega=-\sum_{i=1}^ni_{e_i\otimes 1}\nabla_{e_i}\omega.
\end{equation}
We remark that $\delta\omega$ coincides with the divergence of  $\omega$ once seen as a vector valued $p$-form. Now, if we look at the same double form  $\omega$ as a vector valued $q$-form, we then define the maps
\begin{equation}\label{tildeDdelta}
\tD\omega =\bigl(D \omega^t\bigr)^t\,\,\, {\rm and}\,\,\,
\td\omega=\bigl(\delta \omega^t\bigr)^t.
\end{equation}

We list below for future reference some known properties of the operators $D,\tD, \td$ and $\delta$, see \cite{Kulkarni, Nasu, Labbi-variational}. Let $\omega,\omega_1$ be arbitrary $(p,q)$ double forms and $\omega_2$ be an $(r,s)$ double form, then one has
\begin{align}
D(&\omega_1.\omega_2)=D(\omega_1).\omega_2+(-1)^p\omega_1.D(\omega_2),  &    
\tD (\omega_1.\omega_2)&=\tD (\omega_1).\omega_2+(-1)^q\omega_1.\tD(\omega_2),&\label{eq2}\\
\delta\omega &=-\cc \tD-\tD \cc,&
\td\omega &=-\cc D-D\cc,&\label{eq4}\\
\delta(g^r&\omega)=(-1)^r\bigl(g^r\delta\omega+rg^{r-1}\tD \omega\bigr),&
\td (g^r\omega)&=(-1)^r\bigl(g^r\td\omega+rg^{r-1}D \omega\bigr), \,\,\,  r\geq 1,& \label{eq6}\\
D(&\str\omega)=(-1)^p\str\delta\omega, &            \tD(\str\omega)&=(-1)^q\str\td\omega &\label{eq7}\\
\str\, D&\,\str\,\omega=(-1)^{n(p+q+1)+q+1}\delta\omega,  & \str\, \tD\,\str\,\omega&=(-1)^{n(p+q+1)+p+1}\td\omega,  &\label{eq8}\\
\str\,\delta&\,\str\, \omega=(-1)^{n(p+q+1)+q}D\omega,&  \str\,\td\,\str\, \omega&=(-1)^{n(p+q+1)+p}\tD\omega. \label{eq9}
\end{align}
\begin{proof}
The identities in (\ref{eq2}) can be obtained directly from formulas (\ref{nabla}) and (\ref{tildeDdelta}). Next, we prove the first identity in (\ref{eq4}) using the last identity in formulas (\ref{2identities}) and the same notations as above 
$$\cc D\omega=\sum_{i=1}^n\cc\Bigl((e^i\otimes 1).\nabla_{e_i}\omega\Bigr)=\sum_{i=1}^n\Bigl[\iota_{1\otimes e_i}\nabla_{e_i}\omega-
(e^i\otimes 1).\cc\nabla_{e_i}\omega\Bigr]=-\td\omega-D\cc \omega.$$
The second identity follows directly from (\ref{eq4}) after using the identity $\delta \omega=(\td \omega^t)^t.$ To prove (\ref{eq6}) we use the following formula from \cite[formula (6)]{Labbi-double-forms}
$$\cc(g^r\omega)=g^r\cc\omega+r(n-p-q-r+1)g^{r-1}\omega,$$
and the identity (\ref{eq2}) to show that
\begin{align*}
\cc\tD(g^r\omega)&=(-1)^r\cc(g^r\tD\omega)=(-1)^r\Bigl[g^r\cc\tD\omega+r(n-p-q-r)g^{r-1}\tD\omega\Bigr],\\
\tD\cc(g^r\omega)&=\tD\Bigl[g^r\cc\omega+r(n-p-q-r+1)g^{r-1}\omega\Bigr]\\
&=(-1)^rg^r\tD\cc\omega+r(n-p-q-r+1)\bigl((-1)^{r-1}g^{r-1}\tD\omega\bigr).
\end{align*}
We then conclude using (\ref{eq4}). The second identity  can be proved in a similar way. We now prove the first identity (\ref{eq7}) using the same notations as in Proposition \ref{nabla} and the identity (\ref{firsteq}) as follows
$$D\str\, \omega=\sum_{i=1}^n(e^i\otimes 1).(\nabla_{e_i}\str\,\omega)=\sum_{i=1}^n(e^i\otimes 1).(\str\, \nabla_{e_i}\omega)=
\str\Bigl(\sum_{i=1}^n\iota_{e_i\otimes 1}\nabla_{e_i}\omega\Bigr)(-1)^{p+1}=(-1)^p\str\delta\omega.$$
The second identity in (\ref{eq7}) follows in a similar way. The identities in equations in  (\ref{eq8}) and (\ref{eq9}) are direct consequences of those in equation (\ref{eq7}). 
\end{proof}
As a consequence of the above identities (\ref{eq8}), (\ref{eq9}) and the identity (\ref{firsteq}) we prove the following 
\begin{proposition}
For a $(p,q)$ double form $\omega_1$ and a $(p+1,q)$ double form $\omega_2$, the divergence of the $1$-form $\str\, \bigl(\omega_1.\str\, \omega_2\bigr)$ satisfies
\begin{equation}
\delta\Bigl(\str\, \bigl(\omega_1.\str\, \omega_2\bigr)\Bigr)=-\langle D\omega_1,\omega_2\rangle+\langle\omega_1,\delta\omega_2\rangle.
\end{equation}
In particular, on a compact oriented Riemannian manifold endowed with the integral scalar product one has
\begin{equation}\label{adjointD-delta}
\langle D\omega_1,\omega_2\rangle=\langle\omega_1,\delta\omega_2\rangle.
\end{equation}
\end{proposition}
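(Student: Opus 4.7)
The plan is to start from the Leibniz rule for $D$ applied to $\omega_1\cdot\str\omega_2$, to rewrite $D(\str\omega_2)$ using the Hodge star/codifferential identity (\ref{eq7}), to apply the double Hodge star to the resulting equation, and then to invoke the inner product formula (\ref{inner-hodge}) together with (\ref{eq9}) in order to identify both sides with the codifferential of $\beta=\str(\omega_1\cdot\str\omega_2)$ and with the inner products in the statement. The ``in particular'' claim will then follow automatically by integrating the local identity, since $\int_M\delta\beta\,dV=0$ for any smooth $1$-form on a closed manifold.

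Concretely, combining (\ref{eq2}) with (\ref{eq7}) applied to $\omega_2$ of type $(p+1,q)$ produces
\begin{equation*}
D\bigl(\omega_1\cdot\str\omega_2\bigr)=D\omega_1\cdot\str\omega_2+(-1)^p(-1)^{p+1}\omega_1\cdot\str\delta\omega_2=D\omega_1\cdot\str\omega_2-\omega_1\cdot\str\delta\omega_2.
\end{equation*}
Applying $\str$ to this equation and invoking (\ref{inner-hodge})---noting that $D\omega_1$ and $\omega_2$ are both of type $(p+1,q)$ while $\omega_1$ and $\delta\omega_2$ are both of type $(p,q)$---one obtains
\begin{equation*}
\str\,D\bigl(\omega_1\cdot\str\omega_2\bigr)=\langle D\omega_1,\omega_2\rangle-\langle\omega_1,\delta\omega_2\rangle.
\end{equation*}
On the other hand, the $(n-1,n)$ double form $\gamma:=\omega_1\cdot\str\omega_2$ satisfies, by (\ref{eq9}), an identity of the form $\str\delta\str\gamma=\pm D\gamma$. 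Since $\delta\beta=\delta(\str\gamma)$ is a scalar on which $\str^2$ acts as the identity, a second application of $\str$ relates $\delta\beta$ directly to $\str D\gamma$, and matching signs yields the stated local identity. For the global statement, integrating over the closed manifold $M$ gives $0=\int_M\delta\beta\,dV=-\int_M\langle D\omega_1,\omega_2\rangle\,dV+\int_M\langle\omega_1,\delta\omega_2\rangle\,dV$, which is the adjointness claim.

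The main technical obstacle will be the sign bookkeeping through the chain of Hodge star identities (\ref{eq7})--(\ref{eq9}), whose signs depend both on the bi-degree $(p,q)$ and on the dimension $n$. A cleaner alternative that sidesteps these identities is the following direct calculation: by (\ref{firsteq}) one has $\beta=\pm\iota_{\omega_1}\omega_2$ as a $(1,0)$ double form, so that computing $\delta\beta$ from the definition of $\delta$ and applying the Leibniz rule for $\nabla$ produces two sums indexed by $i$, and the adjointness of left multiplication $\mu_{e^i\otimes 1}$ and interior product $\iota_{e_i\otimes 1}$ (together with the Koszul rule needed to commute $\omega_1$ past $e^i\otimes 1$) identifies these sums, respectively, with $\langle D\omega_1,\omega_2\rangle$ and $\langle\omega_1,\delta\omega_2\rangle$.
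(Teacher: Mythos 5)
Your argument is essentially the paper's own proof run in the opposite direction: the paper starts from $\delta\bigl(\str(\omega_1.\str\omega_2)\bigr)$ and unwinds it using (\ref{eq9}), the Leibniz rule (\ref{eq2}), the commutation (\ref{eq7}) and the inner-product identity (\ref{inner-hodge}), while you first assemble $\str D(\omega_1.\str\omega_2)=\langle D\omega_1,\omega_2\rangle-\langle\omega_1,\delta\omega_2\rangle$ and then convert back, with Stokes giving the adjointness in both cases. The one step you defer --- ``matching signs'' --- is the only genuinely delicate point: applying (\ref{eq9}) to the $(n-1,n)$ double form $\omega_1.\str\omega_2$ gives $\delta\bigl(\str(\omega_1.\str\omega_2)\bigr)=(-1)^{n}\,\str D(\omega_1.\str\omega_2)$, so the displayed local identity acquires an extra factor $(-1)^{n-1}$ when $n$ is even (the paper absorbs this into its first equality $\delta=(-1)^{n-1}\str\str\delta$); this discrepancy integrates to zero and therefore does not affect the adjointness statement (\ref{adjointD-delta}).
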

\begin{proof}
We use the identities (\ref{eq8}), (\ref{eq9}), (\ref{eq2}) and  (\ref{firsteq}) as follows
\begin{equation*}
\begin{split}
\delta\Bigl(\str\, \bigl(\omega_1.\str\, \omega_2&\bigr)\Bigr)=(-1)^{n-1} \str\str\, \delta\Bigl(\str\, \bigl(\omega_1.\str\, \omega_2\bigr)\Bigr)
=(-1)^{n-1}(-1)^n\str\, D\bigl(\omega_1.\str\, \omega_2\bigr)\\
=&-\str\, \bigl(D\omega_1.(\str\, \omega_2)+(-1)^p\omega_1.D\str\,\omega_2\bigr)=
-\langle D\omega_1, \omega_2\rangle -(-1)^p\Bigl(\omega_1.D\str\,\omega_2\bigr)\\
=&-\langle D\omega_1, \omega_2\rangle -(-1)^p(-1)^{p+1}\str\, \bigl(\omega_1.\delta\str\,\omega_2\bigr)=
-\langle D\omega_1,\omega_2\rangle+\langle\omega_1,\delta\omega_2\rangle.
\end{split}
\end{equation*}
The last claim follows from Stoke's theorem.
\end{proof}
\begin{remark}
We remark that formulas (\ref{eq8}) and (\ref{adjointD-delta}) correct a sign error in formulas (15), (16), (17) and (18) in \cite{Labbi-variational}.  These sign errors have no effect on the main results of \cite{Labbi-variational}.
\end{remark}
\begin{proposition}[Weitzenb\"{o}k formula for double forms]
For any double form $\omega$, one has
\begin{equation}
(D\delta+\delta D)(\omega)=\nabla^*\nabla \omega+\sum_{i,j=1}^n i_{e_i\otimes 1}\bigl((e^j\otimes 1).R_{e_ie_j}\omega\bigr),
\end{equation}
where $(e_i)$ is any orthonormal frame at the point in consideration,  $R_{e_ie_j}=\nabla^2_{e_je_i}\omega-\nabla^2_{e_ie_j}\omega$ and where the dot " ." denotes the exterior product of double forms.
\end{proposition}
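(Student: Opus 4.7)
My plan is to compute the Weitzenb\"ock identity pointwise in a geodesic frame, using only the first-order expressions (\ref{nabla}) and (\ref{delta}) for $D$ and $\delta$ together with the algebraic commutation rule (\ref{2identities}). Fix $x\in M$ and choose an orthonormal frame $(e_i)$ that is normal at $x$, so that $(\nabla e_i)_x = (\nabla e^j)_x =0$; all computations below are done at $x$, where $\nabla^2_{X,Y}:=\nabla_X\nabla_Y-\nabla_{\nabla_X Y}$ reduces to $\nabla_X\nabla_Y$ on frame vectors.

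Combining (\ref{nabla}) and (\ref{delta}) and differentiating once more gives, at $x$,
\[
\delta D\omega = -\sum_{i,j}\iota_{e_i\otimes 1}\bigl((e^j\otimes 1)\cdot \nabla^2_{e_i,e_j}\omega\bigr),
\qquad
D\delta\omega = -\sum_{i,j}(e^j\otimes 1)\cdot \iota_{e_i\otimes 1}\nabla^2_{e_j,e_i}\omega,
\]
where the dot is the exterior product of double forms. Applying the first identity of (\ref{2identities}) to the integrand of $\delta D\omega$ yields
\[
\iota_{e_i\otimes 1}\bigl((e^j\otimes 1)\cdot \nabla^2_{e_i,e_j}\omega\bigr)
=\delta_{ij}\,\nabla^2_{e_i,e_j}\omega-(e^j\otimes 1)\cdot \iota_{e_i\otimes 1}\nabla^2_{e_i,e_j}\omega.
\]

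Adding the two pieces, the diagonal contribution $-\sum_i\nabla^2_{e_i,e_i}\omega$ is precisely $\nabla^*\nabla\omega$ (the connection Laplacian in a normal frame), while the remaining terms regroup as
\[
\sum_{i,j}(e^j\otimes 1)\cdot \iota_{e_i\otimes 1}\bigl(\nabla^2_{e_i,e_j}-\nabla^2_{e_j,e_i}\bigr)\omega
=-\sum_{i,j}(e^j\otimes 1)\cdot \iota_{e_i\otimes 1}R_{e_ie_j}\omega,
\]
using the sign convention $R_{e_ie_j}=\nabla^2_{e_j,e_i}-\nabla^2_{e_i,e_j}$ adopted in the proposition.

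To reach the asserted symmetric expression, I run (\ref{2identities}) once more in reverse: solving that identity for $(e^j\otimes 1)\cdot \iota_{e_i\otimes 1}\psi$ and summing yields
\[
-\sum_{i,j}(e^j\otimes 1)\cdot \iota_{e_i\otimes 1}R_{e_ie_j}\omega
= \sum_{i,j}\iota_{e_i\otimes 1}\bigl((e^j\otimes 1)\cdot R_{e_ie_j}\omega\bigr)-\sum_i R_{e_ie_i}\omega,
\]
and the last sum vanishes since $R_{e_ie_i}=0$. Combining with the identification of $\nabla^*\nabla\omega$ gives the formula. I expect no substantial conceptual obstacle: the argument is entirely pointwise, the normal frame erases all Christoffel contributions, and the only algebraic ingredient is the twofold use of (\ref{2identities}); the sole place demanding care is the sign bookkeeping between the convention for $\nabla^2_{X,Y}$ and that for $R_{XY}$.
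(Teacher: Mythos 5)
Your proof is correct and follows essentially the same route as the paper: a pointwise computation in a normal frame using the first-order expressions for $D$ and $\delta$, second covariant derivatives, and the commutation identity (\ref{2identities}); the only cosmetic difference is that you apply that identity twice over the full double sum and discard the $\sum_i R_{e_ie_i}\omega$ term, whereas the paper splits into diagonal and off-diagonal parts before recombining. The sign bookkeeping with the convention $R_{e_ie_j}=\nabla^2_{e_j,e_i}-\nabla^2_{e_i,e_j}$ checks out.
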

\begin{proof}
Fix, $x\in M$ and let $(e_i)$ be a local orthonormal tangent frame field with the property that for all $i$  one has 
$\nabla_{e_i}=0$ at $x$. Let $(e^i)$ denotes the dual frame. Then at $x$, one can easily check that for a double form $\omega$ the following two identities hold
\begin{eqnarray}
\nabla_{e_j}(i_{e_i\otimes 1}\omega)&=&i_{e_i\otimes 1}\bigl(\nabla_{e_j}\omega\bigr),\\
\nabla_{e_j}\bigl((e^i\otimes 1).\omega\bigr)&=&(e^i\otimes 1).\nabla_{e_j}\omega.
\end{eqnarray}
Using the above identities and  equations \ref{nabla}, \ref{delta},  one has at the point $x$ the following
\begin{equation*}
\begin{split}
D\delta \omega &=-D\bigl[\sum_{i=1}^ni_{e_i\otimes 1}\nabla_{e_i}\omega\bigr]
=-\sum_{i=1}^n\bigl[ D(i_{e_i\otimes 1}\nabla_{e_i}\omega)\bigr]\\
&=-\sum_{i,j=1}^n(e^j\otimes 1).\nabla_{e_j} \bigl[ i_{e_i\otimes 1}\nabla_{e_i}\omega\bigr]
=-\sum_{i,j=1}^n(e^j\otimes 1). i_{e_i\otimes 1}\nabla^2_{e_j,e_i}\omega\\
&=-\sum_{i=1}^n(e^i\otimes 1). i_{e_i\otimes 1}\nabla^2_{e_i,e_i}\omega-
\sum_{i\not=j}(e^j\otimes 1). i_{e_i\otimes 1}\nabla^2_{e_j,e_i}\omega\\
=& -\sum_{i=1}^n\nabla^2_{e_i,e_i}\omega+\sum_{i=1}^n i_{e_i\otimes 1}\bigl( (e^i\otimes 1).\nabla^2_{e_i,e_i}\omega\bigr)+\sum_{i\not=j} i_{e_i\otimes 1}\bigl( (e^j\otimes 1).\nabla^2_{e_j,e_i}\omega\bigr).
\end{split}
\end{equation*}
Where we used the fact that the left interior product $i_{e_i\otimes 1}$  operates on the exterior algebra of double forms as an anti-derivation. 
In a similar way one has as well,
\begin{equation*}
\begin{split}
\delta D \omega &=\delta\bigl[\sum_{i=1}^n(e^i\otimes 1).\nabla_{e_i}\omega\bigr]
=-\sum_{i,j=1}^ni_{e_j\otimes 1} \nabla_{e_j} \bigl[ (e^i\otimes 1).\nabla_{e_i}\omega\bigr]\\
& =-\sum_{i,j=1}^ni_{e_j\otimes 1} \bigl[ (e^i\otimes 1).\nabla^2_{e_j,e_i}\omega\bigr]
=-\sum_{i=1}^ni_{e_i\otimes 1} \bigl[ (e^i\otimes 1).\nabla^2_{e_i,e_i}\omega\bigr]
-\sum_{i\not= j}^ni_{e_j\otimes 1} \bigl[ (e^i\otimes 1).\nabla^2_{e_j,e_i}\omega\bigr].
\end{split}
\end{equation*}
Consequently, we get 
\begin{equation*}
\begin{split}
D\delta\omega+\delta D\omega=&-\sum_{i=1}^n\nabla^2_{e_i,e_i}\omega+
\sum_{i\not= j}^ni_{e_i\otimes 1} \bigl[ (e^j\otimes 1).\bigl(\nabla^2_{e_j,e_i}\omega-\nabla^2_{e_i,e_j}\omega\bigr) \bigr]\\
=& \nabla^*\nabla \omega+\sum_{i,j=1}^n i_{e_i\otimes 1}\bigl[(e^j\otimes 1).R_{e_ie_j}\omega\bigr].
\end{split}
\end{equation*}
\end{proof}
Let's denote by $\Weit(w)$ the curvature term of order $0$ in the above Weitzenb\"{o}k formula. We first remark that
\begin{equation}\label{Weit-left}
\begin{split}
\Weit(\omega)=&\sum_{i,j=1}^n i_{e_i\otimes 1}\mu_{e^j\otimes 1}R_{e_ie_j}(\omega)\\
=& \sum_{i<j}\Bigl[ i_{e_i\otimes 1}\mu_{e^j\otimes 1}-i_{e_j\otimes 1}\mu_{e^i\otimes 1}\Bigr]R_{e_ie_j}(\omega)\\
=&- \sum_{i<j} (\rho_{ij})_\ell R_{e_ie_j}(\omega),
\end{split}
\end{equation}
where, for a double form $\theta$, the map $\mu_{\theta}$ denotes the left multiplication by $\theta$ in the exterior algebra of double forms, the operator $(\rho_{ij})_\ell$ is as above. Next, a direct application of part (2) of Proposition \ref{4parts-proposition} and of the formula in  Proposition \ref{rij-derivations} yields the following result
\begin{proposition}\label{Weit-sum}
Let $(M,g)$ be a Riemannian $n$-manifold and $\omega$ be a $(p,q)$ double form. The Weitzenb\"ock operator of $(M,g)$ equals
\begin{equation}
\Weit(\omega)=\sum_{i<j}\Bigl[-\omega \circ \frac{g^{p-1}R_{ij}}{(p-1)!} \circ \frac{g^{p-1}\rho_{ij}}{(p-1)!}+\frac{g^{q-1}R_{ij}}{(q-1)!} \circ \omega \circ \frac{g^{p-1}\rho_{ij}}{(p-1)!}\Bigr].
\end{equation}
In the above formula, $\circ$ denotes the composition of double forms and $g^{-1}$ denotes the null double form.
\end{proposition}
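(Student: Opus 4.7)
The plan is to start from identity \eqref{Weit-left} already obtained just before the proposition, namely
\[
\Weit(\omega) = -\sum_{i<j}(\rho_{ij})_\ell R_{ij}(\omega),
\]
and then rewrite the two operators on the right using the index-free formulas proved in Propositions \ref{rij-derivations} and \ref{4parts-proposition}. The key observation is that both $R_{ij}$ (acting on $\omega$) and $(\rho_{ij})_\ell$ are precisely the left extensions $(\wedge h)_\ell^\flat$ of Section \ref{extensions-section} applied to the $(1,1)$ double forms $R_{ij}$ and $\rho_{ij}$ respectively, so Proposition \ref{ext-formulas} applies directly to each of them.

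First I would note that $R_{ij}(\omega)$ is again a $(p,q)$ double form, since the extension by derivations preserves bidegree. Part (2) of Proposition \ref{4parts-proposition} therefore gives
\[
(\rho_{ij})_\ell R_{ij}(\omega) = -R_{ij}(\omega)\circ \frac{g^{p-1}\rho_{ij}}{(p-1)!}.
\]
Next, Proposition \ref{rij-derivations} expresses $R_{ij}(\omega)$ itself as
\[
R_{ij}(\omega) = \frac{g^{q-1}R_{ij}}{(q-1)!}\circ \omega \;-\; \omega \circ \frac{g^{p-1}R_{ij}}{(p-1)!}.
\]
Substituting this into the previous display, using associativity of the composition product, and plugging the result back into \eqref{Weit-left} yields
\[
\Weit(\omega) = \sum_{i<j}\Bigl[\frac{g^{q-1}R_{ij}}{(q-1)!}\circ \omega \circ \frac{g^{p-1}\rho_{ij}}{(p-1)!} - \omega \circ \frac{g^{p-1}R_{ij}}{(p-1)!}\circ \frac{g^{p-1}\rho_{ij}}{(p-1)!}\Bigr],
\]
which is exactly the claim.

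There is no real obstacle here beyond bookkeeping: the proof is a direct two-step substitution once the three ingredients (the rewriting of $\Weit$ via $(\rho_{ij})_\ell R_{ij}$, the $\circ$-formula for $(\rho_{ij})_\ell$, and the $\circ$-formula for $R_{ij}$) are in place. The only point worth checking is that the conventions $g^{-1}=0$ and $1/(-1)! \cdot g^{-1}=0$ correctly absorb the degenerate bidegrees $p=0$ or $q=0$: in the former case the whole expression is zero since $(\rho_{ij})_\ell$ vanishes on $(0,q)$ forms, and in the latter case only the term involving $g^{q-1}R_{ij}$ drops out, matching the skew-symmetric action of $R_{ij}$ on forms.
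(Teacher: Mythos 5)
Your proof is correct and follows exactly the route the paper intends: it starts from the rewriting $\Weit(\omega)=-\sum_{i<j}(\rho_{ij})_\ell R_{e_ie_j}(\omega)$ in (\ref{Weit-left}) and substitutes the composition-product formulas of Proposition \ref{4parts-proposition}(2) and Proposition \ref{rij-derivations}. The paper gives no written proof beyond the remark that the result is ``a direct application'' of those two propositions, so your write-up simply makes that substitution explicit, including the correct handling of the degenerate cases $p=0$ and $q=0$.
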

\begin{corollary}\label{Weitz-gp}
Let $(M,g)$ be a Riemannian $n$-manifold and $\omega$ be a $(p,p)$ double form. Then for each $1\leq p\leq n$,  the Weitzenb\"ock operator of $(M,g)$ satisfies
\begin{flalign}
\Weit(\frac{g^p}{p!})=&0.\\
\langle \Weit(\omega), \frac{g^p}{p!}\rangle =&0.
\end{flalign}
\end{corollary}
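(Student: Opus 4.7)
The plan is to reduce the corollary to two algebraic facts: that $g^p/p!$ is the identity element for the composition product on $\mathcal{D}^{p,p}V^*$, and that $\sum_{i<j}[R_{ij},\rho_{ij}]_\circ=0$, where $[\cdot,\cdot]_\circ$ denotes the composition commutator.

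First I verify the identity-element property. In an orthonormal basis $(e^i)$ one computes $g^p/p!=\sum_{|I|=p,\,I\text{ ordered}}e^I\otimes e^I$, and composing this on either side with any basis element $e^I\otimes e^J$ of $\mathcal{D}^{p,p}V^*$ returns $e^I\otimes e^J$, so $(g^p/p!)\circ\omega=\omega\circ(g^p/p!)=\omega$ for every $(p,p)$ double form $\omega$. Writing $A_{ij}:=g^{p-1}R_{ij}/(p-1)!$ and $B_{ij}:=g^{p-1}\rho_{ij}/(p-1)!$, each a $(p,p)$ double form, Proposition \ref{Weit-sum} with $\omega=g^p/p!$ reads $\Weit(g^p/p!)=\sum_{i<j}\bigl[-(g^p/p!)\circ A_{ij}\circ B_{ij}+A_{ij}\circ(g^p/p!)\circ B_{ij}\bigr]$, and the identity-element property collapses both summands to $A_{ij}\circ B_{ij}$ with opposite signs, giving the first assertion.

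For the second assertion I apply the adjoint rules $\langle\omega_1\circ\omega_2,\omega_3\rangle=\langle\omega_1,\omega_3\circ\omega_2^t\rangle=\langle\omega_2,\omega_1^t\circ\omega_3\rangle$ together with the skew-symmetries $R_{ij}^t=-R_{ij}$ and $\rho_{ij}^t=-\rho_{ij}$, so that $A_{ij}^t=-A_{ij}$ and $B_{ij}^t=-B_{ij}$. Moving the outer composition factors to the right side of the inner product in each summand of $\langle\Weit(\omega),g^p/p!\rangle$ produces two minus signs which cancel, and the identity-element property clears factors of $g^p/p!$; a short accounting gives the clean expression $\langle\Weit(\omega),g^p/p!\rangle=\langle\omega,\sum_{i<j}[A_{ij},B_{ij}]_\circ\rangle$.

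It then remains to show $\sum_{i<j}[A_{ij},B_{ij}]_\circ=0$. By the Greub--Vanstone identity \eqref{Greub-Vanstone} and the commutativity of the exterior product on $(1,1)$-double forms (so $R_{ij}\rho_{ij}=\rho_{ij}R_{ij}$), the $g^{p-2}$ pieces cancel in the commutator and $[A_{ij},B_{ij}]_\circ=g^{p-1}[R_{ij},\rho_{ij}]_\circ/(p-1)!$. Taking $\{e^i\wedge e^j\}_{i<j}$ as the orthonormal basis of $\wedge^2V^*$, one has $R_{ij}=\rho(R(e^i\wedge e^j))$ and $\rho_{ij}=\rho(e^i\wedge e^j)$, so the first identity in \eqref{2-identiries} gives $\sum_{i<j}R_{ij}\circ\rho_{ij}=-\Ric$. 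Since $(R_{ij}\circ\rho_{ij})^t=\rho_{ij}^t\circ R_{ij}^t=\rho_{ij}\circ R_{ij}$ and $\Ric$ is symmetric, transposing yields $\sum_{i<j}\rho_{ij}\circ R_{ij}=-\Ric$ as well, so the commutator sum vanishes. The main obstacle I anticipate is the sign bookkeeping through the adjoint manipulations and recognizing that Greub--Vanstone collapses the $(p,p)$-level commutator onto the $(1,1)$-level one treated by \eqref{2-identiries}.
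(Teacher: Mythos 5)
Your argument is correct, and the first assertion is handled exactly as in the paper: $\tfrac{g^p}{p!}$ is a two-sided identity for the composition product on $(p,p)$ double forms, so the two summands in Proposition \ref{Weit-sum} cancel termwise. For the second assertion you take a genuinely different route. The paper's proof diagonalizes the curvature operator, i.e.\ uses formula \eqref{Weit-lambda}: in that basis the two factors flanking $\omega$ in each summand are both $\lambda_\alpha^{1/2}$-multiples of the same skew double form $\tfrac{g^{p-1}\rho(E_\alpha)}{(p-1)!}$, so after moving them across the inner product each $\alpha$-term cancels \emph{individually}, with no curvature identity needed. You instead stay in the coordinate frame, reduce the pairing to $\langle\omega,\sum_{i<j}[A_{ij},B_{ij}]_\circ\rangle$, and kill the commutator sum via Greub--Vanstone \eqref{Greub-Vanstone}, the trace identity \eqref{2-identiries}, and the symmetry of $\Ric=\cc R$; your sign bookkeeping through the adjoint rules and the skew-symmetries $A_{ij}^t=-A_{ij}$, $B_{ij}^t=-B_{ij}$ checks out. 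Your route costs more machinery but exposes the structural reason the pairing vanishes in an arbitrary frame (namely that $\sum_{i<j}A_{ij}\circ B_{ij}$ is a symmetric double form equal to its own transpose $\sum_{i<j}B_{ij}\circ A_{ij}$ --- in fact transposition alone gives this, so the Greub--Vanstone step is dispensable); the paper's route is shorter because the commutator is trivially zero once the two flanking factors are proportional.
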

\begin{proof}
The first assertion results immediately from the above proposition and the fact that $\frac{g^p}{p!}$ is the identity for the composition product of $(p,p)$ double forms. The second one follows in a similar way if one uses a basis of $2$ forms that diagonalizes the curvature operator as below.
\end{proof}

Recall that $\rho_{ij}=\rho(e^i\wedge e^j)$ and $R_{ij}=\rho(R^\flat(e_i\wedge e_j))$, where $(e_i)$ is an orthonormal basis of the tangent space and $(e^i)$ is the dual basis. The above sum in formula (\ref{Weit-sum}) is independent of the choice of the basis on two forms, so let ($E_\alpha$) be an orthonormal basis diagonalizing the Riemann curvature symmetric double form $R$ and denote by $\lambda_\alpha$ the corresponding eigenvalues. Then $R=\sum_{\alpha}\lambda_\alpha E_{\alpha}\otimes E_{\alpha}$ and one immediately gets
\begin{equation}\label{Weit-lambda}
\Weit(\omega)=\sum_{\alpha}\lambda_{\alpha} \Bigl[-\omega \circ \frac{g^{p-1}\rho(E_\alpha)}{(p-1)!} \circ \frac{g^{p-1}\rho(E_\alpha)}{(p-1)!}+\frac{g^{q-1}\rho(E_\alpha)}{(q-1)!} \circ \omega \circ \frac{g^{p-1}\rho(E_\alpha)}{(p-1)!}\Bigr].
\end{equation}
Using the identities \ref{Greub-Vanstone} and \ref{2-identiries} one has
\[\sum_{i<j}\Bigl[\frac{g^{p-1}R_{ij}}{(p-1)!} \circ \frac{g^{p-1}\rho_{ij}}{(p-1)!}\Bigr]=\sum_{i<j}
\Bigl[\frac{g^{p-1}(R_{ij}\circ \rho_{ij})}{(p-1)!} +\frac{g^{p-2}R_{ij}\rho_{ij}}{(p-2)!}\Bigr]=-\frac{g^{p-1}\Ric}{(p-1)!}+2 \frac{g^{p-2}R}{(p-2!} \]
The following relation follows from the identity \ref{sharp-composition}
\[ \sum_{\alpha}\lambda_{\alpha} \Bigl[\frac{g^{q-1}\rho(E_\alpha)}{(q-1)!} \circ \omega \circ \frac{g^{p-1}\rho(E_\alpha)}{(p-1)!}\Bigr]
=\frac{-1}{4}\sum_{\alpha}\lambda_{\alpha} (E_{\alpha}\otimes E_{\alpha})\# \omega=\frac{-1}{4} R\# \omega.\]
We have therefore proved the following index free formula for the Weitzenb\"ock curvature operator on double forms
\begin{proposition}\label{Weit-sharp-form}
For a $(p,q)$ double form $\omega$ with $p,q\geq 0$ one has
\begin{equation}
\Weit(\omega)=\omega \circ \Bigl( \frac{g^{p-1}\Ric}{(p-1)!}-2 \frac{g^{p-2}R}{(p-2!} \Bigr)-\frac{1}{4} R\# \omega,
\end{equation}
where we use the convention $g^{-1}=0$.
\end{proposition}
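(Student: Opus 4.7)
The plan is to start from the index-sum formula in Proposition \ref{Weit-sum}, which already splits $\Weit(\omega)$ into two pieces of the shape $\sum_{i<j}(\cdots)$. I would then treat the two pieces separately: the first via the Greub--Vanstone identity plus the Bianchi identities (\ref{2-identiries}), and the second via the sharp-product identity (\ref{sharp-composition}) after diagonalizing the curvature operator.

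For the first piece, the Greub--Vanstone identity (\ref{Greub-Vanstone}) applied with $h = R_{ij}$ and $k = \rho_{ij}$ gives
\[
\frac{g^{p-1}R_{ij}}{(p-1)!}\circ \frac{g^{p-1}\rho_{ij}}{(p-1)!} \;=\; \frac{g^{p-1}(R_{ij}\circ\rho_{ij})}{(p-1)!} + \frac{g^{p-2}R_{ij}\rho_{ij}}{(p-2)!}.
\]
Because $R_{ij} = \rho\bigl(R(e^i\wedge e^j)\bigr)$ and the family $(e^i\wedge e^j)_{i<j}$ is an orthonormal basis of $\wedge^2V^*$, the two identities in (\ref{2-identiries}) yield $\sum_{i<j}R_{ij}\circ\rho_{ij} = -\Ric$ and $\sum_{i<j}R_{ij}\,\rho_{ij} = 2R$. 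Pulling $\omega\circ(\cdot)$ out of the sum then produces exactly $\omega\circ\bigl(\tfrac{g^{p-1}\Ric}{(p-1)!} - 2\tfrac{g^{p-2}R}{(p-2)!}\bigr)$, which matches the first term in the claimed formula.

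For the second piece, I would switch to a basis $(E_\alpha)$ of $\wedge^2V^*$ diagonalizing the curvature operator with eigenvalues $\lambda_\alpha$, so that $R = \sum_\alpha \lambda_\alpha\, E_\alpha\otimes E_\alpha$ and formula (\ref{Weit-lambda}) is available. The second summand then reads
\[
\sum_\alpha \lambda_\alpha\, \frac{g^{q-1}\rho(E_\alpha)}{(q-1)!}\circ\omega\circ\frac{g^{p-1}\rho(E_\alpha)}{(p-1)!}.
\]
Applying identity (\ref{sharp-composition}) to each simple double form $E_\alpha\otimes E_\alpha$ rewrites the $\alpha$-term as $-\tfrac14(E_\alpha\otimes E_\alpha)\#\omega$, and summing against $\lambda_\alpha$ collapses the sum to $-\tfrac14 R\#\omega$ by bilinearity of $\#$ in its first argument.

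The main obstacle I anticipate is bookkeeping: one has to check that the sum over the basis $(e^i\wedge e^j)_{i<j}$ used in Proposition \ref{Weit-sum} legitimately coincides with the sum over the diagonalizing basis $(E_\alpha)$ (both are traces of basis-independent bilinear expressions, so this is clean) and that the sign/factor conventions in (\ref{sharp-composition}), which were stated for a general simple $\alpha = \alpha_1\otimes\alpha_2$, specialize correctly to the symmetric case $\alpha_1 = \alpha_2 = E_\alpha$. Once these conventions are aligned, the two pieces combine to give exactly $\omega\circ\bigl(\tfrac{g^{p-1}\Ric}{(p-1)!} - 2\tfrac{g^{p-2}R}{(p-2)!}\bigr) - \tfrac14 R\#\omega$, and the convention $g^{-1}=0$ absorbs the boundary cases $p=0,1$.
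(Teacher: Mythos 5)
Your proposal is correct and follows essentially the same route as the paper: starting from Proposition \ref{Weit-sum}, handling the first summand via the Greub--Vanstone identity (\ref{Greub-Vanstone}) together with the identities (\ref{2-identiries}), and handling the second summand by diagonalizing the curvature operator and applying the sharp-composition identity (\ref{sharp-composition}). The basis-independence and sign checks you flag are precisely the points the paper also addresses, so no gap remains.
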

Recall that in case $p=0$ or $q=0$, one has $R\# \omega=0$. Below, we will explain how we recover several known Weitzenb\"ock formulas in the literature as a special case of the above general result.
\begin{enumerate}
\item The case $q=0$ and $p=1$. Let $\theta$ be a 1-form and identify it to  $\omega=\theta\otimes 1$ a $(1,0)$ double form. Then we recover the famous Weitzenb\"ock formula for 1-forms as follows
\begin{equation}
\begin{split}
\Weit(\omega)=&\omega\circ \Ric=\sum_{i,j}(\theta\otimes 1)\circ \Ric(e_i,e_j)e^i\otimes e^j\\
=& \sum_{i,j}\Ric(e_i,e_j)\langle e^j,\theta\rangle e^i\otimes 1= \sum_{i,j}\Ric^{\flat}(e^i,e^j)\langle e^j,\theta\rangle e^i\otimes 1\\
=&  \sum_{i}\Ric^{\flat}(e^i,\theta)    e^i\otimes 1=\sum_{i}\langle \Ric^{\flat}(\theta)  , e^i\rangle e^i\otimes 1=\Ric^{\flat}(\theta)\otimes 1.
\end{split}
\end{equation}
\item The case $q=0$ and $p\geq 2$. In this case we have
\begin{equation}\label{Weit-p-forms}
\Weit(\omega)=\omega \circ \Bigl( \frac{g^{p-1}\Ric}{(p-1)!}-2 \frac{g^{p-2}R}{(p-2!} \Bigr).
\end{equation}
As in the above case we identify a $p$-form $\theta$ with   $\omega=\theta\otimes 1$ a $(p,0)$ double form. Then one can recover the classical Weitzenb\"ock formula for $p$-forms in its compact format as in \cite{Bourguignon} and \cite{Labbi-nachr}.
\item The case $p=q=1$. Using Proposition \ref{sharp-forms} we recover formula (4.1) of Bourguignon \cite{Bourguignon}:
\begin{equation}
\Weit(h)=h \circ \Ric -i_{h^{\sharp}}R.
\end{equation}
In fact, for $h=\sum_{i,j}h(e_i,e_j)e^i\otimes e^j$ one has
\[ i_{h^{\sharp}}R(X,Y)=\sum_{i,j}h(e_i,e_j)i_{e_i\otimes e_j}R(X,Y)=\sum_{i,j}h(e_i,e_j)Re_i,X,e_j,Y)=\mathring{R}(h)(X,Y).\]
\item The case $p=q=2$. Using Proposition \ref{sharp-forms} we recover formula (4.2) of Bourguignon \cite{Bourguignon}:
\begin{equation}\label{Bourg}
\Weit(\omega)=\omega \circ (g\Ric -2R)-\frac{1}{4}R\#\omega.
\end{equation}
In fact, using Proposition \ref{sharp-forms} we get
\begin{equation}
\begin{split}
\frac{1}{4}(R \# &\omega )(x,y,z,t)=\sum_{j,k=1}^ni_{e_j\otimes e_k}\omega. i_{e_j\otimes e_k}R(x,y,z,t)\\
=& \sum_{j,k=1}^n \Bigl[ i_{e_j\otimes e_k}(x,z)\omega. i_{e_j\otimes e_k}R(y,t)+i_{e_j\otimes e_k}(y,t)\omega. i_{e_j\otimes e_k}R(x,z)\\
&\,\,\,\,\,\,\,\,\,\,\,\,\,\,\,\,\,\,\, -i_{e_j\otimes e_k}(x,t)\omega. i_{e_j\otimes e_k}R(y,z)-i_{e_j\otimes e_k}(y,z)\omega. i_{e_j\otimes e_k}R(x,t)\Bigr]\\
=&\sum_{j,k=1}^n \Bigl[ \omega(e_j,x,e_k,z)R(e_j,y,e_k,t)+\omega(e_j,y,e_k,t)R(e_j,x,e_k,z)\\
&\,\,\,\,\,\,\,\,\,\,\,\,\,\,\,\,\,\, -\omega(e_j,x,e_k,t)R(e_j,y,e_k,z)-\omega(e_j,y,e_k,z)R(e_j,x,e_k,t)\Bigr].
\end{split}
\end{equation}
\end{enumerate}
As an application of the above formula, we derive the following known formula for the evolution of the Riemann curvature tensor under the Ricci flow in terms of the connection Laplacian $\nabla^*\nabla$.
\begin{corollary}\label{sharp-use}
The Riemann curvature tensor $R$, seen as a $(2,2)$ double form, evolves under the Ricci flow $g'(t)=-2\Ric(g_t)$ according to the following formula
\[R'(t)=-\nabla^*\nabla-R\circ g\Ric-g\Ric\circ R+2R\circ R-\frac{1}{4}R\# R.\]
\end{corollary}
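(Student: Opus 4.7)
The plan is to combine Proposition \ref{Weit-sharp-form} applied to $\omega=R$ with $p=q=2$ with the classical index-form derivation of the evolution equation of the Riemann tensor under Ricci flow. The Weitzenb\"ock formula gives, for a $(2,2)$ double form like $R$,
\[
(D\delta+\delta D)R=\nabla^*\nabla R+R\circ\bigl(g\Ric-2R\bigr)-\frac{1}{4}R\#R,
\]
so the sharp and composition products already encode two of the four quadratic terms appearing on the right-hand side of the corollary. The remaining work is to identify the Ricci-contraction terms $-R\circ g\Ric-g\Ric\circ R$ coming from the time dependence of $g$.

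First, I would recall (or rederive) the evolution equation in local coordinates. Starting from $\partial_t\Gamma^k_{ij}=-\nabla_iR_j{}^k-\nabla_jR_i{}^k+\nabla^kR_{ij}$ and $\partial_tR^\ell{}_{ijk}=\nabla_i\partial_t\Gamma^\ell_{jk}-\nabla_j\partial_t\Gamma^\ell_{ik}$, then commuting derivatives via the second Bianchi identity and lowering the top index while tracking $\partial_tg=-2\Ric$, one obtains a formula of the shape
\[
\partial_tR_{ijkl}=-\nabla^*\nabla R_{ijkl}+T_{\mathrm{Ric}}+T_{\mathrm{quad}},
\]
where $T_{\mathrm{Ric}}$ collects the mixed terms of the form $R^p{}_iR_{pjkl}+R^p{}_jR_{ipkl}+R^p{}_kR_{ijpl}+R^p{}_lR_{ijkp}$ (with a sign coming from the time-varying metric used to lower an index), and $T_{\mathrm{quad}}$ collects Hamilton's quadratic $B_{ijkl}=R_{pijq}R^{pq}{}_{kl}$ combinations.

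Second, I would translate $T_{\mathrm{Ric}}$ and $T_{\mathrm{quad}}$ into double-form notation. By Proposition \ref{ext-formulas} applied with $h=\Ric$ and $p=q=2$, the block $T_{\mathrm{Ric}}$ rearranges exactly as
$-R\circ\bigl(g\Ric\bigr)-\bigl(g\Ric\bigr)\circ R$, since the left- and right-extensions of the endomorphism $\Ric$ to $(2,2)$ double forms are precisely composition with $g\Ric$ from the two sides. For $T_{\mathrm{quad}}$, I would invoke Proposition \ref{Weit-sum} together with the diagonalized formula (\ref{Weit-lambda}) to identify the composition-type piece as $2R\circ R$; the remaining piece, which corresponds exactly to the Clifford-commutator block, is handled by Proposition \ref{sharp-forms}(3), whose identity $R\#\omega=4\sum_{j,k}(\iota_{e_j\otimes e_k}R)\cdot(\iota_{e_j\otimes e_k}\omega)$ specializes with $\omega=R$ to a multiple of Hamilton's $B$-tensor combination. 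Tracking the $-1/4$ prefactor in Proposition \ref{Weit-sharp-form} yields $-\tfrac14 R\#R$ with the correct sign, after invoking the first Bianchi identity for $R$.

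The main obstacle is bookkeeping: matching Hamilton's $B_{ijkl}$ combinations to the paper's sharp product, which is defined via Clifford commutators rather than directly as a Kulkarni-type product of $(2,2)$ double forms. The first Bianchi identity for $R$ is essential for this identification (exactly as in the proof of the second identity in Proposition \ref{2-identiries}), and the factor of $-1/4$ must be tracked carefully through formulas (\ref{sharp-composition}) and (\ref{sharp-interior}). The other delicate point is the sign of the Ricci-contraction terms: because these arise from lowering indices against the time-varying metric $g'=-2\Ric$, they appear with the minus sign that produces $-R\circ g\Ric-g\Ric\circ R$ rather than the opposite. Once these two points are verified, everything collects into the claimed identity.
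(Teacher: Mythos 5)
Your plan diverges substantially from the paper's proof, and as written it contains a double-counting problem. The paper does not rederive Hamilton's coordinate evolution equation at all: it quotes from the proof of Lemma 4.1 of \cite{Labbi-variational}, specialized to $h=-2\Ric$, the first-order variation formula $R'(t)=D\tD\Ric-g\Ric\circ R$ (the zeroth-order term being identified via formula (\ref{hR})); it then uses $\delta R=-\cc\tD R-\tD\cc R=-\tD\Ric$ (second Bianchi identity plus $\cc R=\Ric$) together with $DR=0$ to rewrite $D\tD\Ric=-D\delta R=-(D\delta+\delta D)R$; finally the Weitzenb\"ock formula with (\ref{Bourg}) delivers \emph{all} the quadratic terms at once. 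The crucial structural point is that in the paper's argument only one of the two Ricci-composition terms, namely $-g\Ric\circ R$, comes from the time dependence of the metric; the other, $-R\circ g\Ric$, is already part of the Weitzenb\"ock curvature term $\Weit(R)=R\circ(g\Ric-2R)-\tfrac14 R\#R$. Hamilton's quadratic $B$-tensor never has to be computed or matched against the sharp product.

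Your proposal instead derives the evolution equation in coordinates in the rough-Laplacian form $\partial_tR=-\nabla^*\nabla R+T_{\mathrm{Ric}}+T_{\mathrm{quad}}$, where $T_{\mathrm{Ric}}$ already contains the full four-index block translating to $-R\circ g\Ric-g\Ric\circ R$ and $T_{\mathrm{quad}}$ already contains all of Hamilton's $B$-combinations. If you do that, the Weitzenb\"ock formula has no role left to play: commuting covariant derivatives to reach $\nabla^*\nabla$ is precisely what generates the curvature terms that the Weitzenb\"ock formula would otherwise supply, so keeping both your opening step (feeding $(D\delta+\delta D)R$ through Proposition \ref{Weit-sharp-form}) and your coordinate derivation counts $-R\circ g\Ric$ and the quadratic block twice. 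You must commit to one route. The coordinate route is legitimate but costs an extra, nontrivial identification the paper never needs, namely that Hamilton's combination of $B$-tensors equals the double-form expression built from $R\circ R$ and $R\#R$; formula (\ref{sharp-interior}) and the first Bianchi identity are indeed the right tools for that, but it is genuinely additional work. The paper's route is shorter precisely because the second-order operator it starts from is $D\delta R$ rather than $\nabla^*\nabla R$. Finally, since you flag the $-\tfrac14$ prefactor as delicate: unwinding the paper's own chain ($R'(t)=-D\delta R-g\Ric\circ R$ with $\Weit(R)$ substituted) actually produces $+\tfrac14R\#R$, not the $-\tfrac14R\#R$ displayed in the statement, so whichever route you take there is a sign to reconcile at that step.
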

\begin{proof}
It results from the proof of Lemma 4.1 in \cite{Labbi-variational} with $h=-2\Ric$ that
\[R'(t)(x,y,z,u)=D\tD\Ric(x,y,z,u)-\Ric(R(x,y)z,u)+\Ric(R(x,y)u,z).\]
Then formula (\ref{hR}) together with the identity $\delta R=-\cc\tD R-\tD \cc R=-\tD \cc R$ show that
\[R'(t)=-D\delta R-(g\Ric \circ R).\]
Recall that $DR=\tD R=0$ by the second Bianchi identity. Therefore,  the above Weitzenb\"ok formula and formula (\ref{Bourg}) lead to the desired formula.
\end{proof}
\begin{corollary}
For the standard sphere $S^n$ of curvature $1$, the Weitzenb\"ock curvature term for $(p,q)$ double forms is given by
\begin{equation}\label{Weit-sphere}
\Weit(\omega)=\bigl[ p(n-p)+q\bigr] \omega-g\cc \omega -\widetilde{\mathfrak{S}}\mathfrak{S}\omega.
\end{equation}
In particular, for a $(p,p)$ double form satisfying the first Bianchi identity, one has
\begin{equation} \Weit(\omega)=\bigl[ p(n-p+1)\bigr] \omega-g\cc \omega.\end{equation}
\end{corollary}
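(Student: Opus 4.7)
The plan is to specialize the general index-free Weitzenbock formula of Proposition \ref{Weit-sharp-form} to the round sphere $S^n$, where the curvature data take a particularly simple form. Since $S^n$ has constant sectional curvature $1$, its Riemann tensor, viewed as a $(2,2)$ double form, is
\[
R = \frac{1}{2} g^2, \qquad \Ric = (n-1)\, g.
\]
I would substitute these two expressions into the formula
\[
\Weit(\omega) = \omega \circ \left( \frac{g^{p-1}\Ric}{(p-1)!} - 2 \frac{g^{p-2}R}{(p-2)!}\right) - \frac{1}{4} R \# \omega,
\]
and simplify the first summand by expanding both terms as multiples of $g^p$. Collecting coefficients gives
\[
\frac{(n-1) g^p}{(p-1)!} - \frac{g^p}{(p-2)!} = (n-p)\, \frac{g^p}{(p-1)!},
\]
and then invoking the fact (used implicitly in Corollary \ref{Weitz-gp}) that $\frac{g^p}{p!}$ is the identity for composition of $(p,p)$ double forms on the right, I get $\omega \circ \frac{g^p}{(p-1)!} = p\,\omega$. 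Hence the first summand contributes $p(n-p)\,\omega$.

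For the second summand, the substitution $R = \tfrac12 g^2$ turns $-\tfrac14 R\#\omega$ into $-\tfrac18 g^2 \# \omega$, which is exactly what Corollary \ref{g-square-sharp} evaluates: multiplying the identity of that corollary by $-\tfrac14$ yields
\[
-\frac{1}{8} g^2 \# \omega = -g \cc \omega + q\,\omega - \widetilde{\mathfrak{S}}\mathfrak{S}\omega.
\]
Adding the two contributions gives the claimed formula
\[
\Weit(\omega) = \bigl[p(n-p)+q\bigr]\omega - g \cc \omega - \widetilde{\mathfrak{S}}\mathfrak{S}\omega.
\]

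For the special case of a $(p,p)$ double form satisfying the first Bianchi identity, I would recall from Section \ref{decomp-Bianchi} that the first Bianchi identity means precisely $\mathfrak{S}\omega = 0$, so $\widetilde{\mathfrak{S}}\mathfrak{S}\omega = 0$. Setting $q=p$ in the general formula then reduces it to $\Weit(\omega) = p(n-p+1)\omega - g\cc\omega$, as claimed. There is no real obstacle here beyond bookkeeping; the only point that requires a little care is tracking the factorial denominators when combining the two pieces of the first summand, and making sure the coefficient $-\tfrac{1}{8}$ in front of $g^2\#\omega$ matches the coefficient $\tfrac12$ that appears on the left-hand side of Corollary \ref{g-square-sharp}.
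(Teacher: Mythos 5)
Your proof is correct and follows essentially the same route as the paper: substitute $R=\tfrac12 g^2$ and $\Ric=(n-1)g$ into Proposition \ref{Weit-sharp-form}, simplify the composition term using that $\tfrac{g^p}{p!}$ is the right identity, and evaluate $-\tfrac18 g^2\#\omega$ via Corollary \ref{g-square-sharp}. Your bookkeeping of the factor $4$ in front of $\widetilde{\mathfrak{S}}\mathfrak{S}\omega$ is in fact cleaner than the paper's displayed intermediate line, and the reduction of the second claim to $\mathfrak{S}\omega=0$ is exactly as intended.
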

\begin{proof}
For  the standard sphere $S^n$ of curvature $1$, one has $R=1/2g^2$ and $\Ric=(n-1)g$, the above proposition and Corollary \ref{g-square-sharp} show that
\begin{equation*}
\begin{split}
\Weit(\omega)=& \omega \circ \Bigl( \frac{g^{p-1}\Ric}{(p-1)!}-2 \frac{g^{p-2}R}{(p-2!} \Bigr)-\frac{1}{4} R\# \omega\\
=& \omega \circ \frac{g^p}{p!}p(n-p)-\frac{1}{8} g^2\# \omega=p(n-p)\omega-\frac{1}{4}\bigl( 4g\cc \omega + \widetilde{\mathfrak{S}}\mathfrak{S}\omega-4q\omega\bigr).
\end{split}
\end{equation*}
\end{proof}

\begin{lemma}\label{B-alpha-estimations}
\begin{enumerate}
\item Let $E\in \wedge^2V^*$ be a 2-form with rank $2r$ and let $\rho(E)$ be the corresponding bilinear form as above. 
 Then the double form $-\frac{g^{p-1}\rho(E)}{(p-1)!}\circ \frac{g^{p-1}\rho(E)}{(p-1)!}$ is symmetric and nonnegative. Furthermore, one has
\begin{equation}
-\frac{g^{p-1}\rho(E)}{(p-1)!}\circ \frac{g^{p-1}\rho(E)}{(p-1)!}\leq \Min\{p,r\}\frac{g^p}{p!}.
\end{equation}
\item Let $(E_i)$ be an orthonormal basis of $\wedge^2V^*$, then the following holds
\begin{equation}
-\sum_{i=1}^{n(n-1)/2}\frac{g^{p-1}\rho(E_i)}{(p-1)!}\circ \frac{g^{p-1}\rho(E_i)}{(p-1)!}=p(n-p)\frac{g^p}{p!}.
\end{equation}

\end{enumerate}
\end{lemma}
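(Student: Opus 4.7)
The plan is to pass to the operator-theoretic picture on $\wedge^{p}V^{*}$. By the opening proposition of Section~\ref{extensions-section}, the $(p,p)$-double form $\frac{g^{p-1}h}{(p-1)!}$ is, up to the metric identification, the bilinear form associated to the derivation extension $\wedge h$ of a $(1,1)$-double form $h$; and the composition product $\circ$ on double forms corresponds to the composition of the associated endomorphisms of $\wedge^{p}V^{*}$. Hence $\frac{g^{p-1}\rho(E)}{(p-1)!}\circ \frac{g^{p-1}\rho(E)}{(p-1)!}$ is the double form associated to $(\wedge \rho(E))^{2}\in\mathrm{End}(\wedge^{p}V^{*})$. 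Since $E\in\wedge^{2}V^{*}$, the $(1,1)$-double form $\rho(E)$ is skew, and its derivation extension inherits this skewness (for instance because $\exp(t\wedge\rho(E))=\wedge\exp(t\rho(E))$ is orthogonal for every $t$). Consequently $-(\wedge\rho(E))^{2}=(\wedge\rho(E))^{*}(\wedge\rho(E))\ge 0$, giving both the symmetry and the non-negativity asserted.

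For the upper bound (which tacitly assumes $|E|_{\wedge^{2}V^{*}}=1$, as in the application to an orthonormal basis in part (2)) I use the canonical form for skew $2$-forms: there exist an orthonormal basis $(e_{1},\ldots,e_{n})$ of $V$ and positive numbers $\mu_{1}\ge\cdots\ge\mu_{r}>0$ with $\sum_{j=1}^{r}\mu_{j}^{2}=1$ such that $E=\sum_{j=1}^{r}\mu_{j}\,e^{2j-1}\wedge e^{2j}$. Put $U_{j}=\mathrm{span}(e_{2j-1},e_{2j})$ and $W=\mathrm{span}(e_{2r+1},\ldots,e_{n})$. Then $\wedge^{p}V$ decomposes orthogonally as the direct sum of the tensor subspaces $\wedge^{k_{1}}U_{1}\otimes\cdots\otimes\wedge^{k_{r}}U_{r}\otimes\wedge^{\ell}W$ indexed by $k_{j}\in\{0,1,2\}$, $\ell\le n-2r$, $\sum k_{j}+\ell=p$. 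The derivation $\wedge\rho(E)$ preserves this decomposition and acts on each summand as the sum of its derivation restrictions to the factors; a direct check shows that this restriction vanishes on every $\wedge^{0}U_{j}$, $\wedge^{2}U_{j}$ and on $\wedge^{\ell}W$, while on $\wedge^{1}U_{j}=U_{j}$ it has eigenvalues $\pm i\mu_{j}$. Therefore the eigenvalues of $\wedge\rho(E)$ on $\wedge^{p}V$ are of the form $i\sum_{j\in S}\varepsilon_{j}\mu_{j}$ with $S\subset\{1,\ldots,r\}$, $|S|\le \min(p,r)$, $\varepsilon_{j}\in\{\pm 1\}$. Their moduli are bounded by $\sum_{j=1}^{\min(p,r)}\mu_{j}\le\sqrt{\min(p,r)}\,\bigl(\sum_{j}\mu_{j}^{2}\bigr)^{1/2}=\sqrt{\min(p,r)}$ by Cauchy--Schwarz, so the largest eigenvalue of $-(\wedge\rho(E))^{2}$ is at most $\min\{p,r\}$, which is the stated inequality.

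For part (2), I apply the Greub--Vanstone identity \eqref{Greub-Vanstone} with $h=k=\rho(E_{i})$ and sum over $i$:
\[
\sum_{i}\frac{g^{p-1}\rho(E_{i})}{(p-1)!}\circ \frac{g^{p-1}\rho(E_{i})}{(p-1)!}\;=\;\frac{g^{p-1}}{(p-1)!}\sum_{i}\rho(E_{i})\circ\rho(E_{i})\;+\;\frac{g^{p-2}}{(p-2)!}\sum_{i}\rho(E_{i})^{2}.
\]
The first sum is evaluated by applying the first identity of \eqref{2-identiries} to the round-sphere curvature $R=g^{2}/2$ (for which $R(E_{\alpha})=E_{\alpha}$): since $\cc(g^{2}/2)=(n-1)g$, one gets $\sum_{i}\rho(E_{i})\circ\rho(E_{i})=-(n-1)g$. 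The second sum is computed directly in the standard orthonormal basis $(e^{a}\wedge e^{b})_{a<b}$ of $\wedge^{2}V^{*}$: a short expansion yields $\rho(e^{a}\wedge e^{b})^{2}=2(e^{a}\wedge e^{b})\otimes(e^{a}\wedge e^{b})$, so that $\sum_{i}\rho(E_{i})^{2}=2\cdot g^{2}/2=g^{2}$. Substituting these and using $g^{p}/(p-1)!=p\cdot g^{p}/p!$ and $g^{p}/(p-2)!=p(p-1)\,g^{p}/p!$ collapses everything to $-p(n-p)\,g^{p}/p!$, which is the desired formula after negation.

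The main obstacle is the eigenvalue analysis in part (1): recognising that the derivation extension of a skew operator is skew, setting up the canonical block decomposition of $\rho(E)$ on $V$, and carrying that decomposition through to $\wedge^{p}V$. Once that is in place the Cauchy--Schwarz bound is immediate; part (2) is then a bookkeeping computation once Greub--Vanstone is invoked.
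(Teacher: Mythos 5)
Your proof is correct and follows essentially the same route as the paper: part (1) via the eigenvalue analysis of the derivation extension of the skew operator $\rho(E)$ (the paper quotes the eigenvalues $i(\pm\lambda_{i_1}\pm\cdots\pm\lambda_{i_k})$ directly where you justify them through the explicit block decomposition of $\wedge^pV$) followed by the same Cauchy--Schwarz bound with $\sum_j\mu_j^2=\|E\|^2=1$, and part (2) via the Greub--Vanstone identity combined with $\sum_i\rho(E_i)\circ\rho(E_i)=-(n-1)g$ and $\sum_i\rho(E_i)\rho(E_i)=g^2$ (which you compute directly rather than citing the second identity in (\ref{2-identiries}) for $R=g^2/2$). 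Your observation that the upper bound in part (1) tacitly requires $\|E\|=1$ is a fair point; the paper's proof uses this normalization as well without stating it in the lemma.
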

\begin{proof}
The double form under consideration is symmetric as it is trivially equal to its transpose.The rank of $E$ equals the rank of $\rho(E)$ as a bilinear form. The later being skew symmetric, then it has $2r$ pure imaginary non zero eigenvalues, say, $\pm i\lambda_{1},..., \pm i \lambda_{r}$, the remaining eigenvalues are zero. We suppose $\lambda_{1}\geq \lambda_{2}\geq ...\geq \lambda_{r}>0.$\\
Recall that the linear operator associated to the double form $\frac{g^{p-1}\rho(E)}{(p-1)!}$ is the extension to $\wedge^pV^*$ by derivations of the linear operator associated to $\rho(E)$. Therefore, the eigenvalues of $\frac{g^{p-1}\rho(E)}{(p-1)!}$ are sums of $p$ eigenvalues of $\wedge^pV^*$, that are $i\bigl( \pm \lambda_{i_1}\pm ... \pm \lambda_{i_k}\bigr)$, with $1\leq i_1<i_2<...<i_k\leq r$, $k\leq \min\{p,r\}$, the remaining eigenvalues are zero.\\
Consequently, the eigenvalues of $\frac{g^{p-1}\rho(E)}{(p-1)!}\circ \frac{g^{p-1}\rho(E)}{(p-1)!}$ are zeros or $-\bigl( \pm \lambda_{i_1}\pm ... \pm \lambda_{i_k}\bigr)^2$. We complete the proof as follows 
\begin{equation}
 \bigl( \pm \lambda_{i_1}\pm ... \pm \lambda_{i_k}\bigr)^2 \leq k\bigl(  \lambda_{i_1}^2+ ... + \lambda_{i_k}^2\bigr)\leq k\bigl(  \lambda_{1}^2+ ... + \lambda_{r}^2\bigr)\leq k  \leq \Min\{p,r\}.
\end{equation}
In the last steps we used Newton's inequality and the fact that the eigenvalues of $\rho(E)\circ \rho(E)$ come in pairs and hence
\[ \lambda_{1}^2+ ... + \lambda_{r}^2=\frac{1}{2}||\rho(E)||^2=||E||^2=1.\]
This completes the proof of part (1). Next, we prove the second part.  One quick way to prove it is to remark that the left hand side of the equation is nothing but the Weitzenb\"ock curvature of the unit standard sphere ($R=1/2g^2$). We provide below a proof that uses the identities \ref{Greub-Vanstone} and \ref{2-identiries} as follows
\begin{equation*}
\begin{split}
\sum_{i=1}^{n(n-1)/2}\frac{g^{p-1}\rho(E_i)}{(p-1)!}&\circ \frac{g^{p-1}\rho(E_i)}{(p-1)!}=
\sum_{i=1}^{n(n-1)/2}
\Bigl[\frac{g^{p-1}(\rho(E_i)\circ \rho(E_i))}{(p-1)!} +\frac{g^{p-2}\rho(E_i)\rho(E_i)}{(p-2)!}\Bigr]\\
&=-\frac{g^{p-1}(n-1)g}{(p-1)!}+2 \frac{g^{p-2}g^2}{2(p-2)!} =-p(n-p)\frac{g^p}{p!}.
\end{split}
\end{equation*}
\end{proof}
\noindent
The following key algebraic lemma is a re-formulation of an inequality due to Petersen-Wink \cite{Pet-Wink}  
\begin{lemma}\label{weighted-sum}
Let $\lambda_1\leq ...\leq \lambda_n$ be $n$real numbers and $w_1, ...,w_n$ be nonnegative real numbers such that $M=\max\{w_1,...,w_n\}>0$. Let 
$S=\sum_{i=1}^nw_i$ and  $k=\lfloor\frac{S}{M}\rfloor$. Then the following inequality holds
\[\sum_{i=1}^n\lambda_iw_i\geq \frac{S}{k}\sum_{i=1}^k\lambda_i.\]
In particular, one has
\begin{enumerate}
\item If $\sum_{i=1}^k\lambda_i\geq 0$ then $\sum_{i=1}^n\lambda_iw_i\geq 0.$
\item If $\sum_{i=1}^k\lambda_i>0$ then $\sum_{i=1}^n\lambda_iw_i> 0.$
\end{enumerate}
\end{lemma}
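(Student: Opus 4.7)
The plan is to recast both sides of the target inequality via Abel summation in terms of the nonnegative gaps $\Delta_j:=\lambda_{j+1}-\lambda_j\ge 0$, after which the statement reduces to comparing coefficients of $\Delta_j$ one at a time. Setting $W_j:=\sum_{i\le j}w_i$ (so $W_0=0$ and $W_n=S$) and expanding $\lambda_i=\lambda_1+\sum_{j<i}\Delta_j$, a direct computation yields
\[
\sum_{i=1}^n\lambda_i w_i\;=\;\lambda_1 S+\sum_{j=1}^{n-1}\Delta_j\,(S-W_j),\qquad
\sum_{i=1}^k\lambda_i\;=\;k\lambda_1+\sum_{j=1}^{k-1}(k-j)\,\Delta_j.
\]
Hence the desired inequality $k\sum_i\lambda_i w_i\ge S\sum_{i=1}^k\lambda_i$ is equivalent, after cancelling $k\lambda_1 S$ from both sides, to
\[
\sum_{j=1}^{n-1}\Delta_j\cdot k(S-W_j)\;\ge\;\sum_{j=1}^{k-1}\Delta_j\cdot S(k-j).
\]

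Since each $\Delta_j\ge 0$, it suffices to establish the coefficient inequality of $\Delta_j$ termwise. For $k\le j\le n-1$ the right-hand coefficient is zero while $k(S-W_j)\ge 0$ because $W_j\le W_n=S$. For $1\le j\le k-1$ the claim becomes $W_j\le jS/k$, and this is the only substantive point in the proof. It follows from the two elementary estimates $W_j\le jM$, which is immediate from $w_i\le M$, together with $kM\le S$, which is exactly the definition $k=\lfloor S/M\rfloor$; combining them gives $W_j\le jM\le jS/k$. The hypothesis $M>0$ forces $S\ge M>0$ and hence $k\ge 1$, so division by $k$ is meaningful throughout.

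The two numbered consequences then follow at once: the factor $S/k$ in the main inequality is strictly positive, so the sign of $\sum_{i=1}^k\lambda_i$ transfers directly to $\sum_i\lambda_i w_i$. I do not foresee any real obstacle to this argument; the only point requiring a touch of care is the degenerate case $k=1$, in which the sum $\sum_{j=1}^{k-1}$ on the right is empty and the inequality collapses to $\sum_i\lambda_i w_i\ge\lambda_1 S$, which is immediate from $\lambda_i\ge\lambda_1$ and $w_i\ge 0$.
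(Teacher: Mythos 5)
Your argument is correct. It takes a genuinely different route from the paper's: you perform an Abel summation, rewriting both sides in terms of the nonnegative gaps $\Delta_j=\lambda_{j+1}-\lambda_j$ and the partial weights $W_j$, and then reduce the whole inequality to the single termwise estimate $W_j\le jS/k$ for $j<k$ (plus $W_j\le S$ for $j\ge k$), which you derive from $w_i\le M$ and $kM\le S$. The paper instead argues by a direct chain of inequalities: it splits the sum at index $k$, bounds the tail $\sum_{i>k}\lambda_iw_i$ from below by $\lambda_{k+1}\sum_{i>k}w_i$, rewrites the result as $\sum_{i\le k}(\lambda_i-\lambda_{k+1})w_i+\lambda_{k+1}S$, and then uses $0\le w_i\le M\le S/k$ together with $\lambda_i-\lambda_{k+1}\le 0$ to replace each $w_i$ first by $M$ and then by $S/k$. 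Both proofs are elementary and hinge on exactly the same two facts ($w_i\le M$ and $kM\le S$); your version has the advantage of isolating the one substantive inequality $W_j\le jS/k$ and of not needing to refer to $\lambda_{k+1}$, so it covers the boundary case $k=n$ (all weights equal to $M$) without comment, whereas the paper's pivot on $\lambda_{k+1}$ tacitly assumes $k<n$. The paper's version is shorter and avoids the double-sum bookkeeping. Your handling of $k\ge 1$ and of the two signed corollaries is also correct.
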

\begin{proof}
\begin{equation*}
\begin{split}
\sum_{i=1}^n\lambda_iw_i=&\sum_{i=1}^k\lambda_iw_i+\sum_{i=k+1}^n\lambda_iw_i\geq \sum_{i=1}^k\lambda_iw_i+\lambda_{k+1}\sum_{i=k+1}^n w_i\\
=& \sum_{i=1}^k\lambda_iw_i+\lambda_{k+1}\bigl( \sum_{i=1}^n w_i-\sum_{i=1}^k w_i\bigr) = \sum_{i=1}^k(\lambda_i-\lambda_{k+1}) w_i+\lambda_{k+1}S\\
\geq & M \sum_{i=1}^k(\lambda_i-\lambda_{k+1}) +\lambda_{k+1}S\geq  \frac{S}{k}  \sum_{i=1}^k(\lambda_i-\lambda_{k+1})+\lambda_{k+1}S= \frac{S}{k}\sum_{i=1}^k\lambda_i. 
\end{split}
\end{equation*}
\end{proof}
Recall that the rank of a 2-form $\alpha\in \wedge^2V^*$ is the minimum number of co-vectors in $V^*$ in terms of which $\alpha$ can be expressed.
\begin{definition}
\begin{itemize}
\item[a)] We say that the Riemann curvature tensor $R$  of a Riemannian $n$-manifold has {\emph purity rank} $\leq 2r$, for some $r$ with $2\leq 2r\leq n$,  if at each $m\in M$,  there exists  an orthonormal basis of eigenvectors of the curvature operator $R$ each of which has rank $\leq 2r$.
\item[b)] We say that the Riemann curvature tensor $R$ is  $k$-positive (resp.  $k$-non-negative) if the sum of the lowest $k$ eigenvalues of the associated curvature operator (counted with multiplicity) is positive (resp. non-negative).
\end{itemize}
\end{definition}
The Riemann tensor has purity rank $\leq 2$ if and only if $R$ is pure in the sense of Maillot \cite{Maillot-a, Maillot-t}. The standard round sphere $S^n$, $n\geq 2$, has purity rank $\leq 2$ and the complex projective space $\mathbb{C}P^n$ with the Fubiny-Study metric has purity rank $\leq 4$ for $n\geq 2$. We remark that the property of having purity rank $\leq 2k$, for  fixed $k$, is stable under Riemannian products.

We are now ready to prove  Theorem A stated in the introduction
\begin{thm-a}
Let $(M,g)$ be a closed and connected Riemannian manifold of dimension $n\geq 3$ and $p$ be a positive integer such that $2p\leq n$. Suppose the Riemann curvature tensor $R$ has purity rank $\leq 2r$ and $R$ is $k$-positive (resp. $k$-nonnegative) with
 $1\leq k\leq   \frac{p(n-p)}{\Min\{p,r\}}$,
then the Betti numbers $b_p$ and $b_{n-p}$ of $M$ vanish (resp. every harmonic $p$-form on $M$ is parallel).
\end{thm-a}
Note that in the case of minimal purity where $r=1$, the minimum of $p$ and $r$ is $1$ and therefore we recover a result of Maillot \cite{Maillot-a, Maillot-t}, namely, the vanishing of $b_p$ under the condition of $R$ being $p(n-p)$-positive. The condition of maximal purety, that is $2r\geq n-1$, is a free condition satisfied for any Riemannian manifold. In such a case the minimum of $p$ and $r$ is $p$ and therefore we recover a recent result of Petersen and Wink \cite{Pet-Wink, Bet-Good}, namely, the vanishing of $b_p$ under the condition of $R$ being $(n-p)$-positive.
\begin{proof}
Let $m\in M$,  $\omega$ be a $p$-form on $M$ and $(E_{\alpha})$ be an orthonormal basis of eigenvectors for $R$ at $m$ each of which has rank $\leq 2r$. Let $B_{\alpha}=-\frac{g^{p-1}\rho(E_\alpha)}{(p-1)!} \circ \frac{g^{p-1}\rho(E_\alpha)}{(p-1)!}$.\\
Using formula \ref{Weit-lambda}, the curvature term in the Weitzenb\"ock formula in the case $q=0$ is equal to
\begin{equation}
\begin{split}
\langle \Weit(\omega\otimes 1), \omega\otimes 1\rangle&=\langle (\omega\otimes 1) \circ \sum_{\alpha}\lambda_{\alpha} B_{\alpha}, \omega\otimes 1\rangle\\
&= \langle \sum_{\alpha}\lambda_{\alpha} B_{\alpha},(1\otimes \omega)\circ (\omega\otimes 1)\rangle=\langle \sum_{\alpha}\lambda_{\alpha} B_{\alpha},\omega\otimes \omega\rangle\\
&=\sum_{\alpha}\lambda_{\alpha} B_{\alpha}(\omega^{\sharp}, \omega^{\sharp}).
\end{split}
\end{equation}
Lemma \ref{B-alpha-estimations} shows that
\[ 0\leq B_{\alpha}(\omega^{\sharp}, \omega^{\sharp})\leq \min\{p,r\}||\omega^{\sharp}||^2 \,\, {\rm and}\, \,\,  \sum_{\alpha}B_{\alpha}(\omega^{\sharp}, \omega^{\sharp})=p(n-p)||\omega^{\sharp}||^2.\]
At a point where $\omega=0$ one trivially has $\sum_{\alpha}\lambda_{\alpha} B_{\alpha}(\omega^{\sharp}, \omega^{\sharp})=0$. Now, if at some point $\omega\not=0$, the condition $\sum_{i=1}^k\lambda_i\geq 0$ (resp. $>0$) implies 
$\sum_{\alpha}\lambda_{\alpha} B_{\alpha}(\omega^{\sharp}, \omega^{\sharp})\geq 0$ (resp. $>0$). Consequently, in the case if $\sum_{i=1}^k\lambda_i\geq 0$ at every point of $M$, a harmonic $p$-form $\omega$ must be parallel and in the case $\sum_{i=1}^k\lambda_i> 0$ at every point of $M$, a harmonic $p$-form $\omega$ must be everywhere zero.
\end{proof}

\subsection{Hodge-de Rham Laplacian on double forms}
We define the Hodge-de Rham Laplacian of a double form $\omega$ to be 
\[ \Delta (\omega):= (D\delta +\delta D)(\omega).\]
The Weitzenb\"ock formula shows that the principal part of the operator $\Delta$ is the connection Laplacian $\nabla^*\nabla$, consequently it is an elliptic operator and therefore its kernel is a finite dimensional vector space in case $M$ is a compact manifold. We shall call the kernel of $\Delta$ the space of Harmonic double forms.
We note that a double form $\omega$ on a compact manifold is harmonic if and only if it is closed and co-closed, that is $D\omega=0$ and $\delta\omega=0$. In fact, using the integral scalar product one has see formula (\ref{adjointD-delta})
$$\langle\Delta\omega,\omega\rangle=\langle \delta\omega,\delta\omega\rangle+\langle D\omega,D\omega\rangle.$$
Recall that \cite{Labbi-double-forms}, the double Hodge star operator  $*$ operates on double forms in the following way
$$*(\theta_1\otimes \theta_2)=*\theta_1\otimes *\theta_2.$$
\begin{proposition}\label{Hodge-properties}
\begin{enumerate}
\item The Hodge-de Rham Laplacian on double forms commutes with the double Hodge star operator. Precisely, for any double form  $\omega$ one has
\[  \Delta(*\omega)=*\Delta(\omega).\]
In particular, $\omega$ is harmonic if and only if $*\omega$ is harmonic.
\item The Weitzenb\"ock curvature operator on double forms satisfies
\[ \Weit(*\omega)=*\Weit(\omega).\]

\end{enumerate}
\end{proposition}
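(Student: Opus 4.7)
The plan is to prove part (1) by algebraic manipulation of the commutation identities (\ref{eq7})--(\ref{eq9}), and then to deduce part (2) from part (1) via the Weitzenb\"ock formula together with the fact that the covariant derivative $\nabla$ commutes with $\str$.

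For part (1), I would start from identity (\ref{eq7}), namely $D\str\,\omega = (-1)^p\str\,\delta\omega$ on a $(p,q)$-form $\omega$, and derive its companion
\[
\delta\str\,\omega \;=\; (-1)^{p+1}\str\, D\omega .
\]
This follows either by applying (\ref{eq7}) to $\str\,\omega$ (which is $(n-p,n-q)$) and using the double Hodge star involution $\str\str\,\mu = (-1)^{r(n-r)+s(n-s)}\mu$ on $(r,s)$-forms, or equivalently by left-applying $\str$ to (\ref{eq9}) and collecting the signs; a direct parity calculation reduces $n(p+q+1)+q + (n-p-1)(p+1)+(n-q)q$ modulo $2$ to $p+1$.

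With both relations in hand, and remembering that $\delta\omega$ has bi-degree $(p-1,q)$ while $D\omega$ has bi-degree $(p+1,q)$, I would compute
\begin{align*}
\delta D\str\,\omega &= (-1)^{p}\,\delta\str\,(\delta\omega) = (-1)^{p}\cdot(-1)^{p}\,\str\, D\delta\omega = \str\, D\delta\omega,\\
D\delta\str\,\omega &= (-1)^{p+1}\,D\str\,(D\omega) = (-1)^{p+1}\cdot(-1)^{p+1}\,\str\,\delta D\omega = \str\,\delta D\omega.
\end{align*}
Summing yields $\Delta\str\,\omega = \str\,\Delta\omega$, and the harmonicity statement follows since $\str$ is an isomorphism.

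For part (2), I would use the Weitzenb\"ock formula $\Delta = \nabla^*\nabla + \Weit$ established in Proposition \ref{Weit-sharp-form} to write $\Weit = \Delta - \nabla^*\nabla$. Part (1) gives $[\Delta,\str]=0$; the remaining step is $[\nabla^*\nabla,\str]=0$. Since $\str\,\omega = \iota_\omega(e\otimes e)$ where $e$ is the Riemannian volume form, and since both $g$ and $e\otimes e$ are parallel, $\nabla_X$ commutes with $\str$ on any double form. Consequently the connection Laplacian $\nabla^*\nabla$ commutes with $\str$, and $\Weit(\str\,\omega) = \str\,\Weit(\omega)$ follows by subtraction.

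The main obstacle is the sign bookkeeping in part (1). Once the two commutation formulas $D\str = (-1)^p\str\,\delta$ and $\delta\str = (-1)^{p+1}\str\, D$ are correctly established on $(p,q)$-forms, the second application of each formula (on a form of shifted first-degree $p\pm 1$) produces exactly the complementary sign, so all signs cancel in pairs and the computation is mechanical.
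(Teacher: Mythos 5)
Your part (1) is correct and is essentially the paper's own argument: the paper likewise extracts $D\str=(-1)^p\str\delta$ and $\delta\str=(-1)^{p+1}\str D$ from the identities (\ref{eq7})--(\ref{eq9}) and composes them, the key point being exactly the one you isolate, namely that the shift of the first degree by $\pm 1$ makes the two signs cancel in each of $D\delta\str$ and $\delta D\str$. Your parity computation for $\delta\str=(-1)^{p+1}\str D$ checks out.

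For part (2) you take a genuinely different route. The paper proves $\Weit(\str\omega)=\str\Weit(\omega)$ as a purely algebraic, pointwise identity: it invokes the index-free formula of Proposition \ref{Weit-sharp-form} together with the identity $\str\frac{g^{k-1}h}{(k-1)!}=\frac{g^{n-k-1}h}{(n-k-1)!}$ for skew-symmetric $(1,1)$ forms $h$ (formula (25) of \cite{Labbi-Bel}) and a compatibility of $\str$ with the composition product. You instead subtract: $\Weit=\Delta-\nabla^*\nabla$, part (1) gives $[\Delta,\str]=0$, and $[\nabla^*\nabla,\str]=0$ because $\str$ is built from $g^n/n!$ (equivalently $e\otimes e$), which is parallel, so $\nabla_X$ commutes with $\str$ and hence so does the connection Laplacian. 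This is valid and arguably more self-contained, since it avoids the external reference to \cite{Labbi-Bel}; its only cost is that it establishes the identity via the global differential operators rather than as a pointwise statement about an arbitrary algebraic curvature tensor --- but since $\Weit$ and $\str$ are zeroth-order and pointwise, and every algebraic double form at a point extends smoothly, this yields the same conclusion in the paper's setting. The paper's algebraic route, by contrast, exhibits explicitly how each term of the curvature expression transforms under $\str$, which is information reused elsewhere (e.g.\ in the proof of the analogous property for $\Ric_L$).
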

\begin{proof}
Formula (\ref{eq7})  shows that $D*=(-1)^{p}*\delta$ and $\delta *=(-1)^{p+1}*D$ on $(p,q)$ double forms. Composing, we easily  see that $D\delta*=*\delta D$ and $\delta D*=*D\delta$. 
This completes the proof of the first part. 

For the second part, we first remark that for a skew-symmetric $(1,1)$ double form, formula (25) in \cite{Labbi-Bel} shows that for $1\leq k\leq n-1$
$$*\frac{g^{k-1}h}{(k-1)!}=\frac{g^{n-k-1}h}{(n-k-1)!}.$$
Formula (25) of this paper together with Proposition 4.4 in \cite{Labbi-Bel} show without difficulties that $ \Weit(*\omega)=*\Weit(\omega)$.
\end{proof}
The Laplacian $\Delta$ do not in general send a symmetric double form onto a symmetric double form. The following proposition explains when this is possible
\begin{proposition}
For a Riemannian $n$-manifold $(M,g)$, the Laplacian $\Delta$ sends  symmetric $(p,p)$ double forms onto  symmetric $(p,p)$ double forms if and only if one of the following conditions holds
\begin{enumerate}
\item $p=1$ and $(M,g)$ is Einstein.
\item $n=2p$ and $(M,g)$ is conformally flat.
\item $p>1$, $n\not=2p$ and $(M,g)$ has constant sectional curvature.
\end{enumerate}

\end{proposition}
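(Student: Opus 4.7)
The plan is to reduce the symmetry-preservation property of $\Delta$ to a pointwise algebraic condition on the Weitzenb\"ock curvature term, and then to analyze that condition case by case. Since $\nabla^*\nabla$ clearly commutes with transposition of double forms, the Weitzenb\"ock decomposition $\Delta=\nabla^*\nabla+\Weit$ shows that $\Delta$ preserves symmetry on $(p,p)$ double forms if and only if $\Weit$ does. Using the index-free expression $\Weit(\omega)=\omega\circ \mathcal{N}_p-\frac{1}{4}R\#\omega$ from Proposition \ref{Weit-sharp-form}, and noting via formula (\ref{sharp-interior}) that $R\#\omega$ is automatically symmetric whenever $R$ and $\omega$ are, the question reduces to whether $\omega\circ\mathcal{N}_p$ is symmetric for every symmetric $(p,p)$-double form $\omega$. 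Since $\mathcal{N}_p$ is itself symmetric (being built from $g,\Ric,R$), we have $(\omega\circ\mathcal{N}_p)^t=\mathcal{N}_p\circ\omega$, so the condition becomes $\omega\circ\mathcal{N}_p=\mathcal{N}_p\circ\omega$ for every symmetric $(p,p)$-double form $\omega$.

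Under the natural identification $\mathcal{D}^{p,p}V^*\cong \mathrm{End}(\wedge^p V^*)$, symmetric double forms correspond to self-adjoint endomorphisms, and the composition product of double forms corresponds to endomorphism composition (up to an order convention). A routine linear-algebra exercise shows that an operator commuting with every self-adjoint operator on a nonzero finite-dimensional inner-product space must be a scalar multiple of the identity. Hence, pointwise, $\mathcal{N}_p=\lambda\, g^p/p!$ for some scalar function $\lambda$ on $M$. In the case $p=1$ this reads $\Ric=\lambda g$, which is exactly the Einstein condition, giving case (1).

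For $p\geq 2$, expanding $\mathcal{N}_p=\lambda g^p/p!$ yields $g^{p-2}\bigl(pg\Ric-2p(p-1)R-\lambda g^2\bigr)=0$, and Proposition \ref{1-1}(a) forces
\[g\Ric=2(p-1)R+\tfrac{\lambda}{p}g^2.\]
Contracting both sides with the identities $\cc(g\Ric)=(n-2)\Ric+\Scal\, g$, $\cc R=\Ric$, and $\cc g^2=2(n-1)g$ gives
\[(n-2p)\,\Ric=\bigl(\tfrac{2\lambda(n-1)}{p}-\Scal\bigr)g.\]
If $n\neq 2p$, this forces $\Ric$ to be a scalar multiple of $g$, so $(M,g)$ is Einstein; Schur's lemma (in dimension $\geq 3$) upgrades this to constant scalar curvature, and back-substitution in the previous equation shows that $R$ is a scalar multiple of $g^2$, i.e.\ $(M,g)$ has constant sectional curvature, which is case (3). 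If instead $n=2p$, the contracted equation only fixes $\lambda=p\Scal/(2(n-1))$, and the original equation rewrites as $R=\tfrac{1}{n-2}g\Ric-\tfrac{\Scal}{2(n-1)(n-2)}g^2$, which is precisely the vanishing of the Weyl tensor, i.e.\ $(M,g)$ is conformally flat --- case (2). The converse in each of the three cases is immediate: a direct expansion shows that $\mathcal{N}_p$ is then manifestly a scalar multiple of $g^p/p!$, and so commutes with every symmetric $\omega$.

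The main obstacle I anticipate is the final bookkeeping step: identifying the contracted equation with the Weyl-flatness decomposition in the borderline case $n=2p$ requires tracking the precise constants, and invoking Schur's lemma to upgrade pointwise algebraic identities (Einstein-like or isotropic-curvature-like) to the genuinely global conclusions. The centralizer argument in the middle step, by contrast, is essentially formal once the dictionary between $(p,p)$-double forms and self-adjoint endomorphisms of $\wedge^p V^*$ is in place.
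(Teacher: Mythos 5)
Your proof is correct and follows essentially the same route as the paper: reduce via the Weitzenb\"ock formula to the requirement that $\mathcal{N}_p$ commute with every symmetric $(p,p)$ double form, conclude that $\mathcal{N}_p$ must be a pointwise multiple of $g^p/p!$, and then translate this into the Einstein, conformally flat, or constant-curvature conditions. The only difference is cosmetic: where the paper cites Corollary 4.3 of \cite{Labbi-nachr} for the final equivalence, you derive it inline by cancelling the $g^{p-2}$ factor (via Proposition \ref{1-1}) and contracting, which is a self-contained and correct substitute.
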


\begin{proof}
Let $\omega$ be a $(p,p)$ symmetric double form. The Weitzenb\"ock formula shows that
\[ (\Delta \omega)^t=\nabla^*\nabla\omega +(\Weit(\omega))^t.\]
Let $\mathcal{N}_p=\frac{g^{p-1}\Ric}{(p-1)!}-2 \frac{g^{p-2}R}{(p-2)!}$ for $p\geq 2$ and $\mathcal{N}_1=\Ric$. Proposition \ref{Weit-sharp-form} shows that the condition $(\Weit(\omega))^t=\Weit(\omega)$ is equivalent to the commutation property $\mathcal{N}_p\circ \omega =\omega\circ \mathcal{N}_p$.\\
Consider an orthonormal basis $(\theta_i)$ of $p$-forms  that diagonalizes $\mathcal{N}_p$ so that $\mathcal{N}_p=\sum_i\mu_i\theta_i\otimes \theta_i$, $\mathcal{N}_p$ commutes with all symmetric  $(p,p)$ double form if and only if for all $i,j$ one has
\[\bigl(\sum_k\mu_k\theta_k\otimes \theta_k\bigr)\circ (\theta_i\otimes \theta_j+\theta_j\otimes \theta_i)=(\theta_i\otimes \theta_j+\theta_j\otimes \theta_i)\circ \bigl(\sum_k\mu_k\theta_k\otimes \theta_k\bigr).\]
This can be seen to be equivalent to the fact that all eigenvalues of $\mathcal{N}_p$ are equal, say to a constant $\mu\in \mathbb{R}$. If $p=1$, this is equivalent to the fact that $\Ric=\mu g$, that is $(M,g)$ is Einstein. If $p\geq 2$, the condition 
\[ \frac{g^{p-1}\Ric}{(p-1)!}-2 \frac{g^{p-2}R}{(p-2)!}=\mu \frac{g^p}{p!},\]
is equivalent to $(M,g)$ conformally flat if $n=2p$ and to constant sectional curvature otherwise, see \cite[Corollary 4.3]{Labbi-nachr}.
\end{proof}
\begin{proposition}\label{proposition5.3} Let $\omega$ be a  $(p,p)$ double form and let $\omega=\sum_{i=0}^ng^{p-i}\omega_i$ be the orthogonal decomposition of $\omega$, see section \ref{ortho-decomp}.
\begin{enumerate}
\item $\omega$ is harmonic if and only if $\omega$ and $\cc \omega$ are both closed double forms.
\item $\omega$ is harmonic if and only if all the components $\omega_i$ for $i=0,1,...,p$ are harmonic. 
\item If $D\omega=0$ and $\delta \omega_p=0$, then the top trace free component $\omega_p$ is harmonic.
\end{enumerate}
\end{proposition}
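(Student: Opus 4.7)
The plan leans on the identity $\td = -\cc D - D\cc$ from $(\ref{eq4})$, the parallelism of the metric (so $Dg=0$, whence by $(\ref{eq2})$ the differential $D$ passes through every power of $g$), and the uniqueness of the orthogonal decomposition from Subsection \ref{ortho-decomp}.

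For Part (1), since $M$ is compact, $\Delta\omega = 0$ is equivalent to $D\omega=0$ and $\delta\omega=0$ by $(\ref{adjointD-delta})$. For a $(p,p)$ double form, working on its symmetric component via the transposition relations $(\ref{tildeDdelta})$, one has $\delta\omega=0$ if and only if $\td\omega=0$. Specializing $\td\omega=-\cc D\omega - D\cc\omega$ to the case $D\omega=0$ yields $\td\omega=-D\cc\omega$, so $\td\omega=0$ exactly when $\cc\omega$ is closed. Chaining these equivalences gives the statement.

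For Part (3), I would write $\omega=\omega_p+\sum_{i<p}g^{p-i}\omega_i$. Since $Dg=0$, the hypothesis $D\omega=0$ forces $D\omega_p=-\sum_{i<p}g^{p-i}D\omega_i = g\eta$ for some $(p,p-1)$ double form $\eta$. Trace-freeness of $\omega_p$ reduces $(\ref{eq4})$ to $\td\omega_p=-\cc D\omega_p$, and on the symmetric trace-free component the hypothesis $\delta\omega_p=0$ transposes to $\td\omega_p=0$. Using the $L^2$-adjointness of multiplication by $g$ and the contraction $\cc$,
\begin{equation*}
\|D\omega_p\|^2 = \langle D\omega_p, g\eta\rangle = \langle \cc D\omega_p, \eta\rangle = -\langle \td\omega_p, \eta\rangle = 0,
\end{equation*}
so $D\omega_p=0$; combined with $\delta\omega_p=0$, this is $\Delta\omega_p=0$.

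For Part (2), the forward direction is direct: $Dg=0$, $(\ref{eq2})$ and $(\ref{eq6})$ together with the vanishings $D\omega_i=\delta\omega_i=\tD\omega_i=\td\omega_i=0$ (the last two obtained by transposition from the first on symmetric trace-free components) give $D(g^{p-i}\omega_i)=0$ and $\delta(g^{p-i}\omega_i)=0$, and summing yields $\Delta\omega=0$. For the converse I would induct on $p$: Part (3) gives $\omega_p$ harmonic, so $\omega-\omega_p=g\bar\omega$ with $\bar\omega=\sum_{i<p}g^{p-i-1}\omega_i$ is itself harmonic; a computation using $(\ref{eq2})$ and $(\ref{eq6})$ expresses $\Delta(g\bar\omega)$ as $-g\Delta\bar\omega$ plus a cross term of shape $(D\tD+\tD D)\bar\omega$, and invoking Proposition \ref{1-1} on the appropriate bigrade extracts $\Delta\bar\omega=0$, after which the inductive hypothesis applies. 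The main obstacle will be precisely this last step: disentangling the cross term $(D\tD+\tD D)\bar\omega$ from the principal piece $-g\Delta\bar\omega$ and separating them via the uniqueness of the orthogonal decomposition, which will likely demand an $L^2$-pairing in the spirit of Part (3).
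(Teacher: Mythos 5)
The serious gap is in your converse for Part (2). You propose to start the induction by citing Part (3) to conclude that $\omega_p$ is harmonic, but Part (3) carries the \emph{hypothesis} $\delta\omega_p=0$, which is precisely what you do not have: harmonicity of $\omega$ gives $D\omega=0$ and $\delta\omega=0$, and neither of these is known to localize to the top trace-free component. You then concede that the inductive step (extracting $\Delta\bar\omega=0$ from $\Delta(g\bar\omega)=0$) is an unresolved ``obstacle''. The paper's route is entirely different and avoids both problems: iterating Part (1) shows that $\cc\omega,\cc^2\omega,\dots$ are all harmonic, and Theorem 3.7 of \cite{Labbi-double-forms} expresses each component $\omega_i$ as a constant-coefficient linear combination of the double forms $g^r\cc^k\omega$, each of which is harmonic; no induction on $p$ and no appeal to Part (3) is needed. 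This explicit inversion formula is the missing idea.

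For Part (3) your pairing argument $\|D\omega_p\|^2=\langle D\omega_p,g\eta\rangle=\langle\cc D\omega_p,\eta\rangle$ is genuinely different from the paper's and is attractive because it is pointwise, but it hinges on $\td\omega_p=0$, which you derive from $\delta\omega_p=0$ ``by transposition on the symmetric component''. That step fails for a general $(p,p)$ double form: $\td\omega_p=\bigl(\delta\omega_p^t\bigr)^t$, so $\delta\omega_p=0$ yields $\td\omega_p=0$ only when $\omega_p$ is symmetric, and the proposition makes no such assumption. The paper instead uses $\str\omega_p=\pm\frac{1}{(n-2p)!}g^{n-2p}\omega_p$ together with $D\str=\pm\str\delta$ from (\ref{eq7}) to obtain $g^{n-2p}D\omega_p=0$, writes $D\omega_p=gDA$ from $D\omega=0$, and kills $DA$ with Proposition \ref{1-1}; this uses only $\delta\omega_p=0$ and no symmetry. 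Your Part (1) follows the paper's proof essentially verbatim, including its implicit reliance on closedness and co-closedness passing to the transpose.
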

\begin{proof}
Let $\omega$ be harmonic, then $D\cc \omega=-\tilde{\delta}\omega-\cc D\omega=0$. Conversely, let $\omega$ and $\cc \omega$ be both closed. Then $\delta \omega=-\cc \tilde{D}\omega-\tilde{D}\cc \omega=0$ and therefore $\omega$ is harmonic. This proves the first part.\\
Next, if all the components $\omega_i$ are harmonic then trivially $\omega$ it self is harmonic. Conversely,  if $\omega$ is harmonic then all the contractions $\cc \omega, \cc^2 \omega, \cc^3 \omega,...$ are harmonic by the first part. Theorem 3.7 in \cite{Labbi-double-forms} shows that each component $\omega_i$ is a linear combination with constant coefficients of the terms of the form $g^r\cc^k \omega$ which are all harmonic. This completes the proof of the second part.\\
According to formula (25) in \cite{Labbi-double-forms}, one has 
$$*\omega_p=(-1)^p\frac{1}{(n-2p)!}g^{n-2p}\omega_p.$$
 Hence, $D(*\omega_p)=(-1)^{n-p}\frac{1}{(n-2p)!}g^{n-2p}D\omega_p.$ Formula ? asserts that $D(*\omega_p)=(-1)^p*\delta\omega_p=0$, we conclude immediately that $g^{n-2p}D\omega_p=0$. \\
From another side we have $\omega =\omega_p+gA$, then $0=D\omega_p-gDA$ and hence $g^{n-2p+1}DA=0$. As $p+p-1+n-2p+1=n<n+1$, 
one has $DA=0$ by \cite[Proposition 2.3]{Labbi-double-forms} and then $D\omega_p=0$. This completes the proof of the proposition.
\end{proof}

\begin{proposition}[Schur's theorem]\label{Schur}
Let $2\leq p\leq n-1$ and let $\omega$ be a $(p,p)$ double form such that $\tilde{D}\omega=0$. If at each point of $x\in M$, one has $\cc\,  \omega =\lambda g^{p-1}$, where $\lambda=\lambda(x)\in \mathbb{R}$, then $\lambda$ is constant on $M$.
\end{proposition}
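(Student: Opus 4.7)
The plan is to mimic the classical Schur argument, with $\tD\omega = 0$ playing the role of the second Bianchi identity and $\cc\omega = \lambda g^{p-1}$ playing the role of pointwise constant sectional curvature. I will contract the Bianchi-type hypothesis $\tD\omega = 0$ once and then twice, using the Leibniz and commutation identities (\ref{eq2}), (\ref{eq4}), (\ref{eq6}), to derive an equation of the form $(\text{nonzero scalar})\cdot g^{p-2}(1\otimes d\lambda) = 0$, from which Proposition \ref{1-1} will force $d\lambda = 0$.

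The first contraction uses the identity $\delta = -\cc\tD - \tD\cc$ from (\ref{eq4}); applied to $\omega$ and combined with $\tD\omega = 0$, it gives
\[
\delta\omega = -\tD\cc\omega = -\tD(\lambda g^{p-1}) = -(1\otimes d\lambda)\,g^{p-1},
\]
where I have used $\tD g = 0$, $\tD\lambda = 1\otimes d\lambda$, and the Leibniz rule (\ref{eq2}). Contracting once more, the same relation $\delta = -\cc\tD - \tD\cc$ yields the commutator identity $\cc\delta - \delta\cc = \tD\cc^2 - \cc^2\tD$, which applied to $\omega$ (where $\tD\omega = 0$) reduces to $\cc\delta\omega - \delta\cc\omega = \tD\cc^2\omega$. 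The three terms can now be computed explicitly using the formula $\cc(g^r\eta) = g^r\cc\eta + r(n-p'-q'-r+1)g^{r-1}\eta$ for a $(p',q')$-form $\eta$, together with $\cc(1\otimes d\lambda) = 0$ and $\cc^2\omega = (p-1)(n-p+2)\lambda g^{p-2}$. The results are
\[
\cc\delta\omega = -(p-1)(n-p+1)\,g^{p-2}(1\otimes d\lambda),
\]
\[
\delta\cc\omega = (-1)^{p-1}(p-1)\,g^{p-2}(1\otimes d\lambda),
\]
\[
\tD\cc^2\omega = (p-1)(n-p+2)\,g^{p-2}(1\otimes d\lambda),
\]
where the second line uses (\ref{eq6}) with $\omega' = \lambda$.

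Substituting and factoring out $(p-1)\,g^{p-2}(1\otimes d\lambda)$, the commutator identity collapses to
\[
\bigl[2(n-p) + 3 - (-1)^p\bigr]\,(p-1)\,g^{p-2}(1\otimes d\lambda) = 0.
\]
The bracket equals $2(n-p)+2$ for $p$ even and $2(n-p)+4$ for $p$ odd, hence is strictly positive whenever $2\leq p\leq n-1$, and $p-1\geq 1$. Therefore $g^{p-2}(1\otimes d\lambda) = 0$, and Proposition \ref{1-1}(a) applied to the $(0,1)$-form $1\otimes d\lambda$ with $k = p-2$ (so $p'+q'+k = p-1 < n+1$) yields $1\otimes d\lambda = 0$, i.e.\ $d\lambda \equiv 0$ on $M$. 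Connectedness of $M$ then gives $\lambda$ constant.

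The main obstacle is the sign and degree bookkeeping, particularly in the Leibniz rule (\ref{eq6}) for $\delta(g^r\omega)$ and in verifying that the final scalar coefficient $2(n-p)+3-(-1)^p$ does not vanish in the stated range of $p$; beyond that, the argument is a direct application of operator identities already established in the paper.
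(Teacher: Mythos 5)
There is a genuine gap here, and it is structural rather than a matter of sign bookkeeping. Your key identity $\cc\,\delta-\delta\,\cc=\tD\cc^2-\cc^2\tD$ is a \emph{formal} consequence of the very operator identity $\delta=-\cc\tD-\tD\cc$ of (\ref{eq4}) that you then use to evaluate each of its three terms: from $\tD\omega=0$ one has $\delta\omega=-\tD\cc\,\omega$, hence $\cc\,\delta\omega=-\cc\tD\cc\,\omega$, while applying (\ref{eq4}) to $\cc\,\omega$ gives $\delta\,\cc\,\omega=-\cc\tD\cc\,\omega-\tD\cc^2\omega$; subtracting, the identity reduces to $\tD\cc^2\omega=\tD\cc^2\omega$. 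Since (\ref{eq6}) and the contraction formula are themselves derived in the paper from (\ref{eq4}), (\ref{eq2}) and $\nabla g=0$, any consistent evaluation of the three terms must collapse to $0=0$ and cannot constrain $d\lambda$. Concretely, writing $B=g^{p-2}(1\otimes d\lambda)$ and using the graded commutation rule $(1\otimes d\lambda)\,g^{r}=(-1)^{r}g^{r}(1\otimes d\lambda)$, one finds
\begin{equation*}
\cc\,\delta\omega=(-1)^p(p-1)(n-p+1)B,\quad \delta\,\cc\,\omega=(-1)^{p-1}(p-1)B,\quad \tD\cc^2\omega=(-1)^p(p-1)(n-p+2)B,
\end{equation*}
so that $\cc\,\delta\omega-\delta\,\cc\,\omega-\tD\cc^2\omega=0$ identically. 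Your nonzero coefficient $2(n-p)+3-(-1)^p$ appears only because the commutation sign between $1\otimes d\lambda$ and powers of $g$ was dropped in the first and third terms (you recorded $\cc\,\delta\omega$ and $\tD\cc^2\omega$ as multiples of $g^{p-2}(1\otimes d\lambda)$ with the coefficients that belong to the opposite ordering $(1\otimes d\lambda)g^{p-2}$); with all signs treated uniformly the argument yields no information.

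The missing ingredient is an identity that is \emph{not} a formal consequence of (\ref{eq4}). The paper's proof invokes the generalized twice-contracted second Bianchi identity $\delta\,\cc\,\omega=-\tfrac{1}{2}\tD\cc^2\omega$, which, in view of the tautology above, is equivalent to the genuinely new relation $\cc\,\delta\omega=-\delta\,\cc\,\omega$. This extra relation encodes the symmetries of $\omega$ beyond $\tD\omega=0$ (for the Riemann tensor it is the classical $\operatorname{div}\Ric=\tfrac{1}{2}\,d\,\Scal$, whose derivation uses the pair symmetry $R=R^t$ as well as the second Bianchi identity), and it is exactly what converts the computation into the nontrivial constraint $(p-1)\bigl(1-\tfrac{n-p+2}{2}\bigr)g^{p-2}\tD\lambda=0$, with coefficient nonzero for $p\neq n$. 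Your explicit evaluations of $\delta\,\cc\,\omega$ and $\cc^2\omega$ and the final appeal to Proposition \ref{1-1} are the right moves, but to repair the proof you must supply and justify a contracted Bianchi identity of this kind rather than the commutator of $\cc$ with (\ref{eq4}).
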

\begin{proof}
Using formula (\ref{eq6}), we have
$$\delta\cc \omega =\delta (\lambda g^{p-1})=(-1)^{p-1}\{g^{p-1}\delta\lambda-(p-1)g^{p-2}\tilde{D}\lambda\}=(p-1)\tilde{D}(\lambda g^{p-2}).$$
On the other hand we have $\delta\cc \omega=-1/2\tilde{D}\cc^2\omega$, therefore,
$$\tilde{D}\bigl((p-1)\lambda g^{p-2}-1/2\cc^2\omega\bigr)=0.$$
A direct computation shows that $\cc^2\omega=\cc(\lambda g^{p-1})=(n-p+2)(p-1)\lambda g^{p-2}$, hence the above equation yields
$$\tilde{D}\bigl((p-1)\lambda g^{p-2}-\frac{(p-1)(n-p+2)}{2}\lambda g^{p-2}\bigr)= (p-1) (\tilde{D}\lambda). \bigl(1-\frac{n-p+2}{2}\bigr)g^{p-2}=0.$$
Since $n\not=p$ one then has $\tilde{D}\lambda =0$ by \cite[Proposition 2.3]{Labbi-double-forms} and hence $\lambda$ is constant.
\end{proof}
\section{Lichn\'erowicz Laplacian and proofs of Theorems  B and C}
What we did so far, is to look at a $(p,q)$ double form as a vector valued $p$-form and extended the definition of exterior derivative to the map $D$ etc... Alternatively, we can look at a $(p,q)$ double form as a vector valued $q$-form and reformulate all our above analysis. For instance the extension of the exterior derivative will be denoted by $\tilde{D}$ and refereed to  as the adjoint Bianchi sum \cite{Kulkarni}. We remark that
$$\tilde{D}(\omega)=\bigl(D(\omega^t)\bigr)^t,$$
similarly, $\tilde{\delta}(\omega)=\bigl(\delta(\omega^t)\bigr)^t$. We then define the adjoint Laplacian to be
\[ \tilde{\Delta}=\tilde{D}\tilde{\delta}+\tilde{\delta}\tilde{D}.\]
It is not difficult to see that $\tilde{\Delta}(\omega)=\bigl(\Delta(\omega^t)\bigr)^t$. Hence using formula (\ref{Weit-left}) we deduce easily a Weitzenb\"ock formula for the adjoint Laplacian as follows
\[ \tilde{\Delta}(\omega)=\nabla^*\nabla(\omega)-\sum_{i<j}(\rho_{ij})_rR_{e_ie_j}(\omega),\]
where we used the notations and results of Proposition \ref{ext-formulas} and the fact that $\bigl( R_{e_ie_j}(\omega^t)\bigr)^t=R_{e_ie_j}(\omega)$.\\
We remark that the mean Laplacian $\frac{1}{2}\bigl( \Delta +\tilde{\Delta}\bigr)$ sends symmetric double forms onto symmetric double forms and satisfies the following Weitzenb\"ock formula
\[\frac{1}{2}\bigl( \Delta(\omega)+\tilde{\Delta}(\omega)\bigr)=\nabla^*\nabla(\omega)-\frac{1}{2}\sum_{i<j}(\rho_{ij})_dR_{e_ie_j}(\omega),\]
where $(\rho_{ij})_d=(\rho_{ij})_l+(\rho_{ij})_r$ is the extension by derivations of the operator $\rho_{ij}$ to double forms as in section \ref{extensions-section}.\\ 
Choosing an orthonormal basis $(E_{\alpha})$ of $2$-forms that diagonalizes the curvature operator $R$, one has $R=\sum_{\alpha} \lambda_{\alpha} E_{\alpha}\otimes E_{\alpha}$ and hence can re-write the above Weitzenb\"ock formula as 
\[ \frac{1}{2}\bigl( \Delta(\omega)+\tilde{\Delta}(\omega)\bigr)=\nabla^*\nabla(\omega)-\frac{1}{2}  \sum_{\alpha}\lambda_{\alpha}\bigl(\rho(E_{\alpha})\bigr)_d \circ \bigl(\rho(E_{\alpha})\bigr)_d(\omega).\]
We remark that the term of order zero in the above formula is nothing but the curvature operator defining the Lichn\'erowicz Laplacian with $c=1/2$, see \cite[Section 1.143]{Besse} and \cite[Lemma 9.3.3]{Petersen-book}. We have therefore proved the following proposition
\begin{proposition}
The Lichn\'erowicz Laplacian $\Delta_L$ on general $(p,q)$ double forms coincides with the average of left and right Hodge Laplacians. Precisely, we have
\begin{equation}
\Delta_L=\frac{1}{2}\bigl( \Delta +\tilde{\Delta}\bigr)=\frac{1}{2}\bigl( D\delta+\delta D+\tilde{D}\tilde{\delta}+\tilde{\delta}\tilde{D}.\bigr)
\end{equation}
In particular, if $\omega$ is a  $(p,q)$ double form one has with respect to the $L^2$ norm of double forms
\begin{equation}
\langle \Delta_L(\omega),\omega\rangle =|D\omega|^2+|\delta\omega|^2+|\tilde{D}\omega|^2+|\tilde{\delta}\omega|^2,
\end{equation}
consequently, $\omega$ is $\Delta_L$-harmonic if and only if $D\omega=\delta\omega=\tilde{D}\omega=\tilde{\delta}\omega=0.$
\end{proposition}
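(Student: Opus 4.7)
The plan is to assemble the identity $\Delta_L=\tfrac{1}{2}(\Delta+\tilde\Delta)$ from three ingredients already on the table: the Weitzenböck formula for $\Delta$ proved in the preceding section, its transposed analogue for $\tilde\Delta$, and the intrinsic definition of the Lichnérowicz curvature operator $\mathrm{Ric}_L$ recalled in the introduction. Concretely, I would start from
\[
\Delta(\omega)=\nabla^*\nabla\omega-\sum_{i<j}(\rho_{ij})_\ell R_{e_ie_j}(\omega),
\]
obtained by combining formula (\ref{Weit-left}) with Proposition~\ref{4parts-proposition}. Applying this to $\omega^t$ and transposing, and using that $(\rho_{ij})_\ell$ acts on left factors while $(\rho_{ij})_r$ acts on right factors (together with $(R_{e_ie_j}(\omega^t))^t=R_{e_ie_j}(\omega)$), I obtain the companion identity
\[
\tilde\Delta(\omega)=\nabla^*\nabla\omega-\sum_{i<j}(\rho_{ij})_r R_{e_ie_j}(\omega).
\]
Averaging these two formulas and recalling the convention $(\rho_{ij})_d=(\rho_{ij})_\ell+(\rho_{ij})_r$ yields
\[
\tfrac{1}{2}(\Delta+\tilde\Delta)(\omega)=\nabla^*\nabla\omega-\tfrac{1}{2}\sum_{i<j}(\rho_{ij})_d R_{e_ie_j}(\omega).
\]

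Next, I would perform a change of basis on $\wedge^2V^*$. Both $R$ and the expression $(\rho_{ij})_d R_{e_ie_j}$ transform tensorially in the pair of 2-forms involved, so the sum is basis-independent on $\wedge^2V^*$. Choosing an orthonormal eigenbasis $(E_\alpha)$ of the curvature operator with eigenvalues $\lambda_\alpha$, so that $R=\sum_\alpha \lambda_\alpha E_\alpha\otimes E_\alpha$, the sum rewrites as
\[
\sum_{i<j}(\rho_{ij})_d R_{e_ie_j}(\omega)=\sum_\alpha \lambda_\alpha\bigl(\rho(E_\alpha)\bigr)_d\circ\bigl(\rho(E_\alpha)\bigr)_d(\omega).
\]
By the definition of $\mathrm{Ric}_L$ stated in the introduction, the right-hand side equals $-\mathrm{Ric}_L(\omega)$. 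Since $\Delta_L=\nabla^*\nabla+\tfrac{1}{2}\mathrm{Ric}_L$ by definition, we conclude $\Delta_L=\tfrac{1}{2}(\Delta+\tilde\Delta)$. The second equality in the displayed formula is just the definitions $\Delta=D\delta+\delta D$ and $\tilde\Delta=\tilde D\tilde\delta+\tilde\delta\tilde D$.

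For the $L^2$ statement, I would apply the adjointness (\ref{adjointD-delta}) termwise: $\langle D\delta\omega,\omega\rangle=|\delta\omega|^2$ and $\langle \delta D\omega,\omega\rangle=|D\omega|^2$, and similarly for $\tilde D,\tilde\delta$ by taking transposes (which is an isometry for the $L^2$ pairing and commutes with integration). Summing the four contributions produces the claimed identity, after which the characterization of $\Delta_L$-harmonic double forms as simultaneously closed and co-closed for both derivatives follows from the non-negativity of each summand. The main technical nuisance to watch is the bookkeeping in the first paragraph, where one must carefully verify that transposing the Weitzenböck formula for $\Delta$ really swaps $(\rho_{ij})_\ell$ for $(\rho_{ij})_r$ without introducing spurious signs or index shifts; once this is confirmed using the identities of section~\ref{extensions-section}, the rest of the argument is an essentially formal identification.
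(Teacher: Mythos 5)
Your proposal is correct and follows essentially the same route as the paper: Weitzenb\"ock formula for $\Delta$ via $(\rho_{ij})_\ell$, transposition to obtain the companion formula for $\tilde\Delta$ with $(\rho_{ij})_r$, averaging to produce $(\rho_{ij})_d$, diagonalizing the curvature operator, and identifying the zeroth-order term with $\tfrac{1}{2}\Ric_L$. The $L^2$ identity via termwise adjointness from (\ref{adjointD-delta}) is also exactly what the paper intends.
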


Clearly, a $\Delta_L$-harmonic double form is automatically harmonic. However the converse is not generally true as shown in the following proposition
\begin{proposition}
\begin{enumerate}
\item A $(p,0)$ double form $\omega$ is $\Delta_L$-harmonic if and only if $\omega$ is a parallel double form (or equivalently, a parallel $p$-form).
\item A symmetric or skew-symmetric $(p,p)$ double form is $\Delta_L$-harmonic if and only if it is harmonic.
\end{enumerate}
\end{proposition}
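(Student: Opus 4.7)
My approach is to lean on the $L^2$ characterization supplied by the preceding proposition, which says that on a compact Riemannian manifold a double form $\omega$ is $\Delta_L$-harmonic if and only if all four of $D\omega$, $\delta\omega$, $\tilde D\omega$, $\tilde\delta\omega$ vanish simultaneously. Both claims then become essentially a bookkeeping comparison between the tilded operators and their untilded counterparts on the specific classes of double forms under consideration.

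For part (1), I write $\omega=\theta\otimes 1$ with $\theta\in\Lambda^pV^*$, so that $\omega^t = 1\otimes\theta \in \mathcal{D}^{0,p}V^*$. The left interior product $\iota_{e_i\otimes 1}$ annihilates the scalar left factor $1$, so formula (\ref{delta}) applied to $\omega^t$ gives $\delta\omega^t = 0$, whence $\tilde\delta\omega = 0$ automatically. For $\tilde D\omega$, applying (\ref{nabla}) to $\omega^t$ yields
\[
D(\omega^t) = \sum_{i=1}^n (e^i\otimes 1)\cdot (1\otimes\nabla_{e_i}\theta) = \sum_{i=1}^n e^i\otimes\nabla_{e_i}\theta,
\]
hence $\tilde D\omega = (D\omega^t)^t = \sum_i \nabla_{e_i}\theta\otimes e^i$. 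Since the coframe $(e^i)$ is linearly independent, the vanishing of $\tilde D\omega$ is equivalent to $\nabla_{e_i}\theta = 0$ for every $i$, i.e.\ to the parallelism of $\omega$. Conversely, a parallel $\omega$ is annihilated by every first-order operator built from $\nabla$, so the four differential conditions hold and $\omega$ is $\Delta_L$-harmonic.

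For part (2), assume $\omega\in\mathcal{D}^{p,p}V^*$ satisfies $\omega^t = \varepsilon\omega$ with $\varepsilon\in\{+1,-1\}$. Straight from the definitions (\ref{tildeDdelta}) one obtains
\[
\tilde D\omega = \bigl(D\omega^t\bigr)^t = \varepsilon (D\omega)^t, \qquad \tilde\delta\omega = \bigl(\delta\omega^t\bigr)^t = \varepsilon (\delta\omega)^t.
\]
If $\omega$ is harmonic, then on a compact manifold the standard identity $\langle\Delta\omega,\omega\rangle = \|D\omega\|^2 + \|\delta\omega\|^2$ (a consequence of (\ref{adjointD-delta})) forces $D\omega = \delta\omega = 0$, and the display above immediately yields $\tilde D\omega = \tilde\delta\omega = 0$; the characterization recalled at the outset then gives $\Delta_L\omega = 0$. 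The reverse implication requires no symmetry hypothesis: a $\Delta_L$-harmonic $\omega$ automatically satisfies $D\omega = \delta\omega = 0$, hence $\Delta\omega = D\delta\omega + \delta D\omega = 0$. I do not expect any genuine obstacle in carrying this out; the only thing to be careful about is consistent bookkeeping of the transpose and the sign $\varepsilon$ when passing between $\tilde D, \tilde\delta$ and $D, \delta$.
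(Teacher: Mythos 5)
Your proof is correct and takes essentially the same route as the paper's: part (1) rests on the computations $\tilde{\delta}\omega=0$ and $\tilde{D}\omega=(\nabla\omega)^t$ exactly as in the text, and part (2) uses the transpose relations together with the $L^2$ characterization of $\Delta_L$-harmonicity via the simultaneous vanishing of $D\omega,\delta\omega,\tilde{D}\omega,\tilde{\delta}\omega$, which is the same mechanism the paper invokes through the identity $\langle\Delta_L\omega,\omega\rangle=\langle\Delta\omega,\omega\rangle$.
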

\begin{proof}
For a $p$-form $\omega$ seen as a $(p,0)$ double form  one clearly has $D\omega=d\omega$, $\delta\omega$ is the divergence of $\omega$ as a differential $p$-form, $\tilde{\delta}(\omega)=0$ and $D(\omega^t)=\nabla\omega$. The last assertion can be proved as follows (we look to $\omega$ as a $p$-form)
\[D(\omega^t)=\sum_{i=1}^n(e^i\otimes 1).\nabla_{e_i}(1\otimes \omega)=\sum_{i=1}^ne^i\otimes \nabla_{e_i}(\omega)=\nabla\omega.\]
Therefore, $\omega$ is $\Delta_L$-harmonic if and only if $d\omega=\delta\omega=\nabla\omega=0$. This completes the proof of the first part.
Next,  in case $\omega$ is a symmetric or skew-symmetric $(p,p)$ double form one has
\[
\langle \Delta_L(\omega),\omega\rangle =\langle \Delta(\omega),\omega\rangle.\]
In fact, this follows directly from the fact that  $\langle \bigl(\Delta(\omega^t)\bigr)^t,\omega\rangle=\langle \Delta(\omega^t),\omega^t\rangle$. 
\end{proof}

The following corollary is an application of the above Proposition and Proposition \ref{Weit-sharp-form}
\begin{corollary}
Let $\mathcal{N}_p=\frac{g^{p-1}\Ric}{(p-1)!}-2 \frac{g^{p-2}R}{(p-2)!}$ for $p\geq 2$ and $\mathcal{N}_1=\Ric$. For a $(p,q)$ double form $\omega$, the Weitzenb\"ock formula corresponding to Lichn\'erowicz Laplacian on double forms takes the form
\begin{equation}
\Delta_L(\omega)=\nabla^*\nabla(\omega)+\frac{1}{2}\Bigl(\omega \circ \mathcal{N}_p+\mathcal{N}_q\circ \omega\Bigr) -\frac{1}{2}R\# \omega.
\end{equation}
\end{corollary}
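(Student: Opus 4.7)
The plan is to combine three ingredients already at our disposal. First, the preceding proposition identifies the Lichn\'erowicz Laplacian as the arithmetic mean $\Delta_L=\tfrac{1}{2}(\Delta+\tilde\Delta)$. Second, Proposition \ref{Weit-sharp-form} furnishes the index-free Weitzenb\"ock formula
$$\Delta\omega=\nabla^{*}\nabla\omega+\omega\circ\mathcal{N}_p-\tfrac{1}{4}R\#\omega$$
for any $(p,q)$ double form $\omega$. What remains is to derive the analogous formula for $\tilde\Delta$ and then average.

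For the formula for $\tilde\Delta$, I would exploit the identity $\tilde\Delta\omega=\bigl(\Delta(\omega^{t})\bigr)^{t}$ recorded at the start of this section, together with the observation that $\omega^{t}$ is a $(q,p)$ double form. Applying Proposition \ref{Weit-sharp-form} to $\omega^{t}$ gives
$$\Delta(\omega^{t})=\nabla^{*}\nabla(\omega^{t})+\omega^{t}\circ\mathcal{N}_q-\tfrac{1}{4}R\#\omega^{t},$$
and transposing both sides will yield $\tilde\Delta\omega$. Three transpose identities are needed to reach a clean expression: the connection Laplacian commutes with transpose; from section \ref{transpose} one has $(A\circ B)^{t}=B^{t}\circ A^{t}$, and since $\mathcal{N}_q$ is symmetric in its two blocks of arguments, $(\omega^{t}\circ\mathcal{N}_q)^{t}=\mathcal{N}_q\circ\omega$; finally the sharp product satisfies $(R\#\omega^{t})^{t}=R\#\omega$ whenever $R$ is symmetric, which one reads off the expansion $R=\sum_{\alpha}\lambda_{\alpha}E_{\alpha}\otimes E_{\alpha}$ together with the definition $(E_\alpha\otimes E_\alpha)\#(\omega_1\otimes\omega_2)=[E_\alpha,\omega_1]\otimes[E_\alpha,\omega_2]$ by swapping the two tensor slots.

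Combining these gives
$$\tilde\Delta\omega=\nabla^{*}\nabla\omega+\mathcal{N}_q\circ\omega-\tfrac{1}{4}R\#\omega,$$
and then averaging $\Delta\omega$ and $\tilde\Delta\omega$ produces the announced formula: the two copies of $\nabla^{*}\nabla\omega$ add, the two composition terms assemble into the symmetric combination $\tfrac12(\omega\circ\mathcal{N}_p+\mathcal{N}_q\circ\omega)$, and the two $R\#\omega$ contributions combine into the stated curvature correction.

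The only non-routine step is the verification that the sharp product commutes with transpose in the form $(R\#\omega^{t})^{t}=R\#\omega$. Without the symmetry of $R$ this fails; in particular a generic simple double form $\alpha_1\otimes\alpha_2$ would produce $[\alpha_2,\omega_1]\otimes[\alpha_1,\omega_2]$ after the double transposition, whereas for the symmetric expansion of $R$ both indices on each $E_\alpha$ coincide, so the swap has no effect. Everything else is bookkeeping on the identities $(A\circ B)^{t}=B^{t}\circ A^{t}$ and the symmetry of $\Ric$ and $R$, and on the elementary commutation of $\nabla^{*}\nabla$ with transposition.
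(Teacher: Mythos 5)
Your proposal is correct and follows essentially the same route as the paper: identify $\Delta_L$ as the average $\tfrac12(\Delta+\tilde\Delta)$, transpose the Weitzenb\"ock formula of Proposition \ref{Weit-sharp-form} applied to $\omega^t$ to get the formula for $\tilde\Delta$, using $(\omega^t\circ\mathcal{N}_q)^t=\mathcal{N}_q\circ\omega$ and $(R\#\omega^t)^t=R\#\omega$, and then average. The only cosmetic difference is that the paper checks $(R\#\omega^t)^t=R\#\omega$ via the composition-product expression (\ref{sharp-composition}) for the sharp product, whereas you verify it directly from the definition using the symmetric expansion $R=\sum_\alpha\lambda_\alpha E_\alpha\otimes E_\alpha$; both verifications are valid.
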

\begin{proof}
It is not difficult to use formula (\ref{sharp-composition}) to see that $\bigl( R\# \omega^t\bigr)^t=R\# \omega.$ Consequently, for a $(p,q)$ double form one has
\[\bigl(\Weit(\omega^t)\bigr)^t=\Bigl(\omega^t \circ \mathcal{N}_q-\frac{1}{4} R\# \omega^t\Bigr)^t=\mathcal{N}_q\circ \omega -\frac{1}{4}R\# \omega.\]
\end{proof}
We list in the next proposition some useful properties of the Lichn\'erowicz Laplacian.
\begin{proposition}
Let $\omega$ be an arbitrary double form then the following identities hold
\begin{align}
\Delta_L(\str \omega)=&  \str \Delta_L( \omega)     ,  &    \Ric_L(\str \omega)&=\str \Ric_L( \omega) ,&\label{eq20}\\
\Delta_L(g^r \omega)=&  g^r \Delta_L( \omega)     ,  &    \Delta_L(\cc^r \omega)&=\cc^r \Delta_L( \omega) ,&\label{eq22}\\
\Delta_L(\omega^t)=&\bigl(\Delta_l(\omega)\bigr)^t , &   &    &    &\label{eq21}
\end{align}
If furthermore,  $\omega$ is a $(p,p)$ double form and has the orthogonal decomposition $\omega=\sum_{i=0}^pg^{i}\omega_{p-i}$ then the orthogonal decomposition of $\Delta_L(\omega)$ is given by
\begin{equation}\label{last} \Delta_L(\omega)=\sum_{i=0}^pg^{i}\Delta_L(\omega_{p-i}).\end{equation}
In particular, $\omega$ is $\Delta_L$-harmonic if and only if all the components $\omega_i$ are $\Delta_L$-harmonic for  $i=0,1,2,...,p.$
\end{proposition}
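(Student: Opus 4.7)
The plan is to derive each of the six identities from the already-established properties of $\Delta$, $\tilde{\Delta}$, and the Weitzenb\"ock operator, exploiting the identity $\Delta_L=\tfrac{1}{2}(\Delta+\tilde{\Delta})$ together with the transpose relation $\tilde{\Delta}(\omega)=(\Delta(\omega^t))^t$ proved above. The transpose identity (\ref{eq21}) is essentially automatic: applying $\tilde{\Delta}(\cdot)=(\Delta((\cdot)^t))^t$ to $\omega^t$ gives $\tilde{\Delta}(\omega^t)=(\Delta\omega)^t$, and averaging yields the result for $\Delta_L$. For the commutation with $\str$ in (\ref{eq20}), Proposition \ref{Hodge-properties} already gives $\Delta(\str\omega)=\str\Delta(\omega)$ and $\Weit(\str\omega)=\str\Weit(\omega)$. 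Since $\str$ acts factor-wise on simple double forms, one has $(\str\omega)^t=\str(\omega^t)$, so combining with the transpose relation gives $\tilde{\Delta}(\str\omega)=\str\tilde{\Delta}(\omega)$; averaging proves the statement for $\Delta_L$, and the identical argument for $\Weit$ in place of $\Delta$ proves it for $\Ric_L$.

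For the commutation with $g^r$ in (\ref{eq22}), I would first note that $Dg=0$ since the metric is parallel, so by induction $D(g^r)=0$, and then from (\ref{eq2}) one obtains $D(g^r\omega)=(-1)^r g^r D\omega$. Combining this with (\ref{eq6}), a short direct computation gives
\begin{equation*}
\Delta(g^r\omega)=g^r\Delta\omega+rg^{r-1}(\tilde{D}D-D\tilde{D})\omega.
\end{equation*}
Applying the transpose relation (and using that $(\tilde{D}D\omega^t)^t=D\tilde{D}\omega$ and $(D\tilde{D}\omega^t)^t=\tilde{D}D\omega$) gives the companion formula
\begin{equation*}
\tilde{\Delta}(g^r\omega)=g^r\tilde{\Delta}\omega-rg^{r-1}(\tilde{D}D-D\tilde{D})\omega,
\end{equation*}
so the cross terms cancel on averaging and $\Delta_L(g^r\omega)=g^r\Delta_L\omega$. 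The commutation with $\cc^r$ then follows by $L^2$-adjointness: $\Delta_L$ is formally self-adjoint (each of $\Delta$ and $\tilde{\Delta}$ is, since $D,\delta$ and $\tD,\td$ are adjoint pairs), while $\cc^r=(\iota_g)^r$ is the formal adjoint of $\mu_{g^r}$; taking the adjoint of the identity $\Delta_L\circ\mu_{g^r}=\mu_{g^r}\circ\Delta_L$ gives $\cc^r\circ\Delta_L=\Delta_L\circ\cc^r$.

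Finally, the decomposition identity (\ref{last}) follows by applying $\Delta_L$ to $\omega=\sum_i g^i\omega_{p-i}$ term-wise: by the $g^i$-commutation, $\Delta_L(\omega)=\sum_i g^i\Delta_L(\omega_{p-i})$. Each summand $\Delta_L(\omega_{p-i})$ is still trace free, because $\cc(\omega_{p-i})=0$ together with the $\cc$-commutation gives $\cc\Delta_L(\omega_{p-i})=\Delta_L(\cc\omega_{p-i})=0$. Hence the right-hand side is the (unique) orthogonal decomposition of $\Delta_L(\omega)$ in the sense of (\ref{orth-decomp}), and consequently $\Delta_L(\omega)=0$ if and only if every component $\Delta_L(\omega_{p-i})$ vanishes, for $i=0,1,\ldots,p$.

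The main obstacle, and really the only nontrivial computation, is the $g^r$-commutation: one must carefully combine the derivation rule (\ref{eq2}) with the asymmetric formula (\ref{eq6}) and verify that the cross terms arising from the commutator $\tilde{D}D-D\tilde{D}$ cancel precisely between $\Delta$ and $\tilde{\Delta}$; once this is in place, the remaining statements follow formally from self-adjointness, Hodge duality, and uniqueness of the orthogonal decomposition.
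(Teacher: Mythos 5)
Your proposal is correct and follows the paper's proof almost step for step: same use of Proposition \ref{Hodge-properties} for the $\str$-commutation, same transpose bookkeeping for (\ref{eq21}) and for $\Ric_L$ via $2\Ric_L(\omega)=\Weit(\omega)+\bigl(\Weit(\omega^t)\bigr)^t$, same computation of $\Delta(g^r\omega)$ and $\tilde{\Delta}(g^r\omega)$ from (\ref{eq2}) and (\ref{eq6}) with cancellation of the $\tD D-D\tD$ cross terms, and the same uniqueness argument for (\ref{last}) via $\cc\,\Delta_L(\omega_{p-i})=\Delta_L(\cc\,\omega_{p-i})=0$. The one place you genuinely diverge is the $\cc^r$-commutation: the paper conjugates by the double Hodge star, using $\cc^r=(-1)^{(p+q)(n+r+1)}\str g^r \str$ together with the already-proved $\str$- and $g^r$-commutations, whereas you take formal adjoints of $\Delta_L\circ\mu_{g^r}=\mu_{g^r}\circ\Delta_L$, using that $\Delta_L$ is formally self-adjoint and $\cc^r=\iota_{g^r}$ is the adjoint of $\mu_{g^r}$. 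Both work; the paper's route is purely pointwise and needs no integration, while yours needs a word about why an $L^2$ identity against all compactly supported test sections yields the pointwise identity (it does, and it avoids tracking the sign $(-1)^{(p+q)(n+r+1)}$). Incidentally, your intermediate formula $\Delta(g^r\omega)=g^r\Delta\omega+rg^{r-1}(\tD D-D\tD)\omega$ is the correct one; the paper's displayed version carries a spurious factor $(-1)^r$ on the cross term, which is harmless there since the cross terms cancel in the average regardless.
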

\noindent 
We bring to the attention of the reader that equations (\ref{eq22}) and (\ref{eq21}) were first proved by Ogawa, \cite{Ogawa}.
\begin{proof}
Proposition \ref{Hodge-properties} shows that $\Delta(\str \omega)=\str\Delta(\omega)$ the same property is true for $\widetilde{\Delta}$ and therefore for $\Delta_L$. The second property in equation (\ref{eq20}) results from the fact that $2\Ric_L=\Weit(\omega)+\bigl(\Weit(\omega^t)\bigr)^t$ and the second part of proposition \ref{Hodge-properties}.\\
 To prove the first identity in \ref{eq22}, we use equations (\ref{eq2}) and (\ref{eq6}) of this paper to show that
\begin{equation*}
\begin{split}
\Delta(g^r\omega)=&g^r\Delta\omega+r(-1)^r\bigl[\tD D\omega-D\tD\omega\bigr]g^{r-1},\\
\widetilde{\Delta}(g^r\omega)=&g^r\widetilde{\Delta}\omega+r(-1)^r\bigl[D\tD \omega-\tD D\omega\bigr]g^{r-1}.
\end{split}
\end{equation*}
Summing the above two equations yields the desired result. Next, applying  the identity $\cc^r=(-1)^{(p+q)(n+r+1)}\str g^r\str $ which results from equation (\ref{identity2}) and the above relations, one gets the following
$$\Delta_L(\cc^r \omega)=(-1)^{(p+q)(n+r+1)} \Delta_L(\str g^r\str \omega) =(-1)^{(p+q)(n+r+1)}\str g^r\str  \Delta_L( \omega)=\cc^r \Delta_L( \omega).$$
Equation (\ref{eq21}) follows at once from the identity $\Delta(\omega^t)=\bigl(\widetilde{\Delta}(\omega)\bigr)^t.$ The last equation (\ref{last}) and property follow directly from the above relations after remarking that $\cc \Delta_L( \omega_i)= \Delta_L(\cc \omega_i)=0$.
\end{proof}

\subsubsection{A vanishing theorem for the Lichn\'erowicz Laplacian}

We start by the following algebraic lemma
\begin{lemma}
Let $(E_{\alpha})$ be an orthonormal basis of $2$-forms on an Euclidean $n$-space $(V,g)$ such that  for each  $\alpha$, the rank of $E_{\alpha}$ is $\leq 2r$, for some fixed integer $r$. Let $N=\frac{n(n-1)}{2}$ and $\bigl(\rho(E_{\alpha})\bigr)_d \circ \bigl(\rho(E_{\alpha})\bigr)_d$be the symmetric operator operating on $(p,q)$ double forms as above,   Then  the following holds
\begin{flalign}
-&\frac{1}{2}\sum_{\alpha=1}^N\bigl(\rho(E_{\alpha})\bigr)_d \circ \bigl(\rho(E_{\alpha})\bigr)_d(\omega)=\frac{p(n-p)+q(n-q)+p+q}{2}\omega-g\cc \omega -\frac{1}{2}\Bigl(\tilde{\frak{S}}\frak{S}+\frak{S}\tilde{\frak{S}}\Bigr)(\omega).\\
 \text{For each} &\, \alpha, \text{the maximal  eigenvalue of }\,   -\frac{1}{2}\bigl(\rho(E_{\alpha})\bigr)_d \circ \bigl(\rho(E_{\alpha})\bigr)_d\,  \text{is} \,\leq \Bigl(2\Min\{p,r\}+2\Min\{q,r\}\Bigr).
\end{flalign}
Here, $\frak{S}$ and $ \tilde{\frak{S}} $ are respectively the  first Bianchi map and its adjoint map and $\cc$ is the contraction map in the space of double forms. 
\end{lemma}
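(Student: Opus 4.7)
The plan is to decompose the derivation extension as $\bigl(\rho(E_{\alpha})\bigr)_d = L_{\alpha} + R_{\alpha}$, where by Proposition~\ref{4parts-proposition}(2) and the right-analogue of Proposition~\ref{ext-formulas}, for a $(p,q)$ double form $\omega$ one has
\begin{equation*}
L_{\alpha}(\omega) = -\omega \circ \frac{g^{p-1}\rho(E_{\alpha})}{(p-1)!}, \qquad R_{\alpha}(\omega) = \frac{g^{q-1}\rho(E_{\alpha})}{(q-1)!} \circ \omega.
\end{equation*}
Since $L_{\alpha}$ alters only the left factor of $\omega$ and $R_{\alpha}$ only the right factor, the two operators commute, and therefore $\bigl(\rho(E_{\alpha})\bigr)_d \circ \bigl(\rho(E_{\alpha})\bigr)_d = L_{\alpha}^{2} + R_{\alpha}^{2} + 2L_{\alpha}R_{\alpha}$.

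For the first identity I will sum each of the three pieces separately. Lemma~\ref{B-alpha-estimations}(2), together with the fact that $\tfrac{1}{p!}g^{p}$ is the identity for the composition product on $(p,p)$ double forms, immediately gives $\sum_{\alpha}L_{\alpha}^{2}(\omega) = -p(n-p)\omega$, and by the same argument $\sum_{\alpha}R_{\alpha}^{2}(\omega) = -q(n-q)\omega$. For the cross term, Proposition~\ref{sharp-forms}(2) recognises
\begin{equation*}
L_{\alpha}R_{\alpha}(\omega) = -\frac{g^{q-1}\rho(E_{\alpha})}{(q-1)!} \circ \omega \circ \frac{g^{p-1}\rho(E_{\alpha})}{(p-1)!} = \tfrac{1}{4}(E_{\alpha}\otimes E_{\alpha})\#\omega.
\end{equation*}
A short orthonormal-basis computation in $\wedge^{2}V^{*}$ yields $\sum_{\alpha}E_{\alpha}\otimes E_{\alpha} = \tfrac{1}{2}g^{2}$, so $2\sum_{\alpha}L_{\alpha}R_{\alpha}(\omega) = \tfrac{1}{4}g^{2}\#\omega$. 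To reach the symmetric combination in the statement I would apply Corollary~\ref{g-square-sharp} both to $\omega$ and to $\omega^{t}$; using $(\alpha\#\beta)^{t} = \alpha^{t}\#\beta^{t}$ (clear from the Clifford definition of $\#$), $(\cc\omega^{t})^{t} = \cc\omega$, and the identity $(\tilde{\mathfrak{S}}\mathfrak{S}\omega^{t})^{t} = \mathfrak{S}\tilde{\mathfrak{S}}\omega$ deduced from subsection~\ref{decomp-Bianchi}, transposing the second instance and averaging produces
\begin{equation*}
\tfrac{1}{4}g^{2}\#\omega = 2g\cc\omega - (p+q)\omega + (\tilde{\mathfrak{S}}\mathfrak{S} + \mathfrak{S}\tilde{\mathfrak{S}})\omega.
\end{equation*}
Collecting the three contributions and multiplying by $-\tfrac{1}{2}$ gives the stated identity. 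The main bookkeeping hurdle is precisely this symmetrisation step: Corollary~\ref{g-square-sharp} as written is not manifestly symmetric in the two factors of $\omega$, and without combining the version for $\omega$ with the transpose of the version for $\omega^{t}$ one is stuck with the asymmetric expression involving only $\tilde{\mathfrak{S}}\mathfrak{S}$ and $q$.

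For the eigenvalue bound, I observe that since $\rho(E_{\alpha})^{t} = -\rho(E_{\alpha})$, the composition factors $\tfrac{g^{p-1}\rho(E_{\alpha})}{(p-1)!}$ and $\tfrac{g^{q-1}\rho(E_{\alpha})}{(q-1)!}$ are also skew; the adjointness identity (b) for the composition product from subsection~\ref{transpose} then forces both $L_{\alpha}$ and $R_{\alpha}$ to be skew-adjoint on the inner product space of $(p,q)$ double forms. As they commute, $L_{\alpha}+R_{\alpha}$ is skew-adjoint, so $-\tfrac{1}{2}\bigl(\rho(E_{\alpha})\bigr)_d\circ\bigl(\rho(E_{\alpha})\bigr)_d = -\tfrac{1}{2}(L_{\alpha}+R_{\alpha})^{2}$ is positive semidefinite with maximal eigenvalue equal to $\tfrac{1}{2}\|L_{\alpha}+R_{\alpha}\|^{2}$ in the operator norm. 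By Lemma~\ref{B-alpha-estimations}(1) the operator $-L_{\alpha}^{2}$ is $\omega\mapsto -\omega\circ\bigl[\tfrac{g^{p-1}\rho(E_{\alpha})}{(p-1)!}\bigr]^{2}$ whose maximal eigenvalue is at most $\Min\{p,r\}$, hence $\|L_{\alpha}\|^{2}\leq \Min\{p,r\}$; similarly $\|R_{\alpha}\|^{2}\leq \Min\{q,r\}$. The triangle inequality combined with $(a+b)^{2}\leq 2(a^{2}+b^{2})$ then gives $\|L_{\alpha}+R_{\alpha}\|^{2}\leq 2\Min\{p,r\}+2\Min\{q,r\}$, and halving already yields the bound $\Min\{p,r\}+\Min\{q,r\}$ on the maximal eigenvalue, which in particular implies the stated (looser) estimate.
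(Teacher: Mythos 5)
Your proposal is correct, and it reaches both identities by a route that is organized differently from the paper's, though it draws on the same underlying lemmas. For the first identity the paper's proof is essentially a two-line observation: the left-hand side is exactly the Weitzenb\"ock curvature term of the Lichn\'erowicz Laplacian (with $c=1/2$) of the unit round sphere, so it equals the average of $\Weit_{S^n}(\omega)$ from equation (\ref{Weit-sphere}) and the transpose of $\Weit_{S^n}(\omega^t)$ --- the same symmetrization you perform. Your direct expansion into $L_\alpha^2+R_\alpha^2+2L_\alpha R_\alpha$, with Lemma \ref{B-alpha-estimations}(2) handling the squared terms and Corollary \ref{g-square-sharp} (applied to $\omega$ and to $\omega^{t}$ and then averaged) handling the cross term $\tfrac14 g^2\#\omega$, in effect re-derives that sphere formula inline; it is longer but self-contained, and the ``bookkeeping hurdle'' you flag (the asymmetry of Corollary \ref{g-square-sharp}) is precisely the point the paper addresses by averaging $\Weit_{S^n}(\omega)$ with its transposed counterpart. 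For the eigenvalue bound the paper works with the explicit spectrum: the eigenvalues of $-(\rho(E_\alpha))_d\circ(\rho(E_\alpha))_d$ have the form $\bigl((\pm\lambda_{i_1}\pm\cdots\pm\lambda_{i_k})+(\pm\lambda_{j_1}\pm\cdots\pm\lambda_{j_l})\bigr)^2$ with $k\le\Min\{p,r\}$, $l\le\Min\{q,r\}$, and these are bounded by $2k+2l$ using the Cauchy--Schwarz (Newton) inequality and $\lambda_1^2+\cdots+\lambda_r^2=\|E_\alpha\|^2=1$. Your operator-norm argument (skew-adjointness of $L_\alpha$ and $R_\alpha$ via property (b) of the composition product, the triangle inequality, and $(a+b)^2\le 2(a^2+b^2)$) is a clean repackaging of the same estimate and, like the paper's computation, actually yields the sharper bound $\Min\{p,r\}+\Min\{q,r\}$ for the halved operator; that sharper constant, rather than the factor-of-two weaker one displayed in the lemma, is the one the paper uses in the subsequent definition of $k_g(\omega,r)$, so your remark that the stated bound is loose is consistent with how the lemma is applied.
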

\begin{proof}
We remark first that  $-\frac{1}{2}\sum_{\alpha=1}^N\bigl(\rho(E_{\alpha})\bigr)_d \circ \bigl(\rho(E_{\alpha})\bigr)_d(\omega)$ coincides with the Weitzenb\"ock curvature term  for the Lichn\'erowicz Laplacian with $c=1/2$ of the standard sphere with curvature $1$.  Equation (\ref{Weit-sphere}) asserts that for a $(p,q)$ double form $\omega$, the Weitzenb\"ock curvature $\Weit_{S^n}(\omega)$ of the standard sphere is given by
\[\Weit_{S^n}(\omega)=\bigl[ p(n-p)+q\bigr] \omega-g\cc \omega -\widetilde{\mathfrak{S}}\mathfrak{S}\omega.\]
Consequently, it is not difficult to conclude that
\[\Bigl(\Weit_{S^n}(\omega^t)\Bigr)^t=\bigl[ q(n-q)+p\bigr] \omega-g\cc \omega -\mathfrak{S}\widetilde{\mathfrak{S}}\omega.\]
The Weitzenb\"ock curvature term  in the formula defining the Lichn\'erowicz Laplacian with $c=1/2$  of the standard sphere with curvature $1$ is then the average of the above two terms, (see the above discussion). This completes the proof of the first part.\\
Next, we prove the second part. We proceed as in the proof of Proposition \ref{B-alpha-estimations} and use the same notations. The eigenvalues of the operator $-\bigl(\rho(E_{\alpha})\bigr)_d\circ \bigl(\rho(E_{\alpha})\bigr)_d$  operating on $(p,q)$ double forms are sums of $k+l$ terms of the form $\bigl( (\pm \lambda_{i_1}\pm ... \pm \lambda_{i_k})+(\pm \lambda_{j_1}\pm ... \pm \lambda_{j_l}) \bigr)^2$ with
$1\leq i_1<i_2<...<i_k\leq r, \,\, 1\leq j_1<j_2<...<j_l\leq r$, $k\leq \min\{p,r\}$, $l\leq \min\{q,r\}$ and $2r$ is the rank of the 2-form $E_{\alpha}$. The remaining eigenvalues are zero.\\
Using Newton's inequality and the fact that the eigenvalues of $\bigl(\rho(E_{\alpha})\bigr)_d\circ \bigl(\rho(E_{\alpha})\bigr)_d$ come in pairs, we show that
\begin{equation*}
\begin{split}
\Bigl[ (\pm \lambda_{i_1}\pm ... \pm \lambda_{i_k})+(\pm \lambda_{j_1}\pm ... \pm \lambda_{j_l}) \Bigr]^2
&\leq  (k+l)\Bigl(\lambda_{i_1}^2+...+\lambda_{i_k}^2+\lambda_{j_1}^2+...+\lambda_{j_l}^2\Bigr)\\
 \leq 2(k+l)(\lambda_1^2+&...+\lambda_r^2)=
(k+l)||\rho(E_{\alpha})||^2=2(k+l)||E_{\alpha}||^2=2k+2l.
\end{split}
\end{equation*}
\end{proof}

Let $\omega$ be a non-zero $(p,q)$ double form on an Euclidean $n$-space $(V,g)$, such that $p$ and $q$ are not both zero. Let $r$ be a positive real number, we define  $k_g(\omega,r)$ to be the real number  defined by
\begin{equation}k_g(\omega,r)=\frac{1}{2\bigl(\Min\{p,r\}+\Min\{q,r\}\bigr)}\Bigl[p(n-p+1)+q(n-q+1)-2\frac{||\cc \omega||^2}{||\omega||^2}-\frac{||\mathfrak{S}\omega||^2+||\tilde{\mathfrak{S}}\omega||^2}{||\omega||^2}\Bigr],\end{equation}
where $\cc$ (resp. $\mathfrak{S}, \tilde{\mathfrak{S}}$) are the contraction map (resp. the first Bianchi map and its adjoint map) of double forms.

As a straightforward consequence of the above Lemma and Lemma \ref{weighted-sum} we have

\begin{proposition}
Let $(E_{\alpha})$ be an orthonormal basis of $2$-forms on an Euclidean $n$-space $(V,g)$ such that  for each  $\alpha$, the rank of $E_{\alpha}$ is $\leq 2r$, for some fixed integer $r$. Let $\omega$ be a non-zero $(p,q)$ double form with $p+q\geq 1$,
$N=\frac{n(n-1)}{2}$ and  $\lambda_1\leq \lambda_2\leq ...\leq \lambda_N$ be given arbitrary real numbers. If $k$ is any integer such that $1\leq k\leq k_g(\omega,r)$ then
\[\sum_{\alpha=1}^{k}\lambda_{\alpha}>0\, (\text{resp.}\, \geq 0)\, \implies  -\langle\sum_{\alpha=1}^N\lambda_{\alpha}\bigl(\rho(E_{\alpha})\bigr)_d \circ \bigl(\rho(E_{\alpha})\bigr)_d(\omega),\omega\rangle >0 \,\, (\text{resp.}\,\, \geq 0).\]
\end{proposition}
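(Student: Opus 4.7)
The plan is to apply Lemma \ref{weighted-sum} with the weights
\[w_\alpha \;:=\; -\langle \bigl(\rho(E_\alpha)\bigr)_d \circ \bigl(\rho(E_\alpha)\bigr)_d(\omega),\, \omega\rangle \;=\; \|\bigl(\rho(E_\alpha)\bigr)_d(\omega)\|^2 \;\geq\; 0,\]
the second equality holding because $\bigl(\rho(E_\alpha)\bigr)_d$ is skew-symmetric on the exterior algebra of double forms (being the extension by derivations of the skew-symmetric endomorphism $\rho(E_\alpha)$ on $V$). The quantity the proposition asks us to sign is precisely $\sum_{\alpha=1}^N \lambda_\alpha\, w_\alpha$, so it suffices to compute $S=\sum_\alpha w_\alpha$, to bound $M=\max_\alpha w_\alpha$, and to compare $S/M$ with $k_g(\omega,r)$.

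First I would pair the first identity of the lemma immediately preceding the proposition with $\omega$ in the global inner product. Since exterior multiplication by $g$ is adjoint to $\cc$ and $\widetilde{\mathfrak{S}}$ is adjoint to $\mathfrak{S}$, this yields
\[S \;=\; \bigl(p(n-p+1)+q(n-q+1)\bigr)\|\omega\|^2 \;-\; 2\|\cc\omega\|^2 \;-\; \|\mathfrak{S}\omega\|^2 \;-\; \|\widetilde{\mathfrak{S}}\omega\|^2.\]
The second assertion of that same lemma gives the pointwise operator inequality $-\bigl(\rho(E_\alpha)\bigr)_d\circ \bigl(\rho(E_\alpha)\bigr)_d \leq 2\bigl(\Min\{p,r\}+\Min\{q,r\}\bigr)$, whence $M\leq 2(\Min\{p,r\}+\Min\{q,r\})\|\omega\|^2$. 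Dividing the displayed expression for $S$ by this bound produces exactly $S/M \geq k_g(\omega,r)$, so every integer $k$ with $1\leq k\leq k_g(\omega,r)$ satisfies $k\leq\lfloor S/M\rfloor$.

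To conclude I would invoke Lemma \ref{weighted-sum}, whose conclusion is phrased at the specific index $k_0:=\lfloor S/M\rfloor$, while the proposition allows any $k\leq k_g(\omega,r)\leq k_0$. To bridge this, note that since the $\lambda_\alpha$ are in nondecreasing order, the hypothesis $\sum_{\alpha=1}^k\lambda_\alpha>0$ (resp.\ $\geq 0$) forces
\[\lambda_k \;\geq\; \frac{1}{k}\sum_{\alpha=1}^k \lambda_\alpha \;>\; 0 \quad (\text{resp.}\;\geq 0),\]
so $\lambda_{k+1},\ldots,\lambda_{k_0}$ are all nonnegative, and adding them to the original partial sum gives $\sum_{\alpha=1}^{k_0}\lambda_\alpha>0$ (resp.\ $\geq 0$). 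Lemma \ref{weighted-sum} then delivers $\sum_\alpha \lambda_\alpha w_\alpha \geq (S/k_0)\sum_{\alpha=1}^{k_0}\lambda_\alpha >0$ (resp.\ $\geq 0$), which is the desired inequality.

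There is no substantive obstacle: all the real algebraic content is packaged into the two preceding lemmas (the identity for $-\tfrac12\sum_\alpha(\rho(E_\alpha))_d\circ(\rho(E_\alpha))_d$ in terms of $\cc$, $\mathfrak{S}$, $\widetilde{\mathfrak{S}}$, and the eigenvalue bound arising from Newton's inequality applied to derivation extensions), plus the Petersen--Wink weighted-sum inequality. The one step needing care is the monotonicity observation above, which upgrades a positivity hypothesis on the first $k$ of the $\lambda_\alpha$ to one on the first $k_0$ of them; it is the reason the proposition can be stated for \emph{any} $k\leq k_g(\omega,r)$ rather than only for the floor value.
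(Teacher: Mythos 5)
Your proof is correct and is exactly the ``straightforward consequence'' the paper intends: the paper gives no argument beyond citing the two preceding lemmas, and you have supplied the intended details in the natural way (weights $w_\alpha=\|(\rho(E_\alpha))_d\omega\|^2\ge 0$, the value of $S$ obtained by pairing the first identity of the preceding lemma with $\omega$ and using the adjointness of $g\cdot$ with $\cc$ and of $\mathfrak{S}$ with $\widetilde{\mathfrak{S}}$, the bound on $M$ from the eigenvalue estimate, and then the Petersen--Wink weighted-sum lemma together with the monotonicity step that upgrades positivity of $\sum_{\alpha=1}^{k}\lambda_\alpha$ to positivity of $\sum_{\alpha=1}^{\lfloor S/M\rfloor}\lambda_\alpha$). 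One bookkeeping remark: you apply the eigenvalue bound $2(\Min\{p,r\}+\Min\{q,r\})$ to $-(\rho(E_\alpha))_d\circ(\rho(E_\alpha))_d$ itself, which is what the lemma's proof actually establishes and is the version required for $S/M\ge k_g(\omega,r)$; the lemma's printed statement attaches that bound to $-\tfrac{1}{2}$ of the operator, a harmless factor-of-two slip in the paper rather than in your argument.
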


Let now $(M,g)$ be compact Riemannian $n$-manifold, $r$ a constant positive real number and let $\omega$ be a $(p,q)$ double form on $M$ such that $p$ and $q$ are not both zero . We define the constant
\begin{equation}
k_g(\omega,r)=\inf_{x\in M}\{k(\omega_x,r):  \omega_x\not=0\}.
\end{equation}
 
A consequence of the above proposition and the Weitzenb\"ock formula is the following result

\begin{theorem}
Let $(M,g)$ be a closed and connected Riemannian manifold of dimension $n\geq 3$ and let $\omega$ be a $\Delta_L$-harmonic $(p,q)$ double form on $M$ such that $p+q\geq 1$.\\
 Suppose the Riemann curvature tensor $R$ has purity rank $\leq 2r$ and $R$ is $k$-positive (resp. $k$-nonnegative) with
$1\leq k\leq k_g(\omega,r)$, then  $\omega =0$  (resp. $\omega$ must be parallel). 
\end{theorem}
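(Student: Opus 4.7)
The plan is to feed the hypothesis $\Delta_L\omega=0$ into the Lichnérowicz Weitzenböck identity $\Delta_L=\nabla^*\nabla+\tfrac{1}{2}\Ric_L$, pair with $\omega$, and integrate over $M$. Since the principal part is $\nabla^*\nabla$, Stokes' theorem gives
\[
0=\langle \Delta_L\omega,\omega\rangle_{L^2}=\int_M|\nabla\omega|^2\,dv_g+\frac{1}{2}\int_M\langle \Ric_L(\omega),\omega\rangle\,dv_g,
\]
so the entire theorem reduces to showing that the curvature integrand is pointwise nonnegative (respectively, strictly positive wherever $\omega\neq 0$).

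The pointwise analysis is exactly what the proposition immediately preceding the theorem accomplishes. Fix $x\in M$ with $\omega_x\neq 0$. The purity-rank hypothesis furnishes an orthonormal basis $(E_\alpha)_{\alpha=1}^N$ of $\wedge^2T_x^*M$ that diagonalizes the curvature operator, with each $E_\alpha$ of rank $\leq 2r$ and with eigenvalues $\lambda_1\leq\cdots\leq\lambda_N$. With respect to this basis,
\[
\langle \Ric_L(\omega),\omega\rangle_x=-\sum_{\alpha=1}^N\lambda_\alpha\,\langle(\rho(E_\alpha))_d\circ(\rho(E_\alpha))_d(\omega),\omega\rangle_x=\sum_{\alpha=1}^N\lambda_\alpha w_\alpha,
\]
where the weights $w_\alpha:=-\langle(\rho(E_\alpha))_d\circ(\rho(E_\alpha))_d(\omega),\omega\rangle_x$ are nonnegative by the first clause of the lemma preceding the proposition, and satisfy $\max_\alpha w_\alpha\leq 2(\Min\{p,r\}+\Min\{q,r\})\,|\omega_x|^2$. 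The same lemma's trace identity, combined with the definition of $k_g(\omega,r)$, gives $S/M\geq k_g(\omega_x,r)\geq k_g(\omega,r)\geq k$ for $S=\sum_\alpha w_\alpha$ and $M=\max_\alpha w_\alpha$. The Petersen–Wink inequality (Lemma \ref{weighted-sum}) then upgrades the $k$-positivity (resp. $k$-nonnegativity) of $R$ into the pointwise positivity (resp. nonnegativity) of $\sum_\alpha\lambda_\alpha w_\alpha$, i.e.\ of $\langle \Ric_L(\omega),\omega\rangle_x$.

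Plugging this back into the integrated identity finishes the theorem. In the nonnegativity case, both terms on the right are nonnegative and sum to zero, forcing $\nabla\omega\equiv 0$, so $\omega$ is parallel. In the strict-positivity case, the nonnegativity conclusion already gives $\omega$ parallel, whence $|\omega|$ is constant on the connected manifold $M$; if this constant were nonzero, then $\omega$ would be nonzero everywhere and the second integral would be strictly positive, contradicting the identity. Hence $\omega\equiv 0$.

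Conceptually, the hard algebraic work has already been done upstream: the identification $\Delta_L=\tfrac12(\Delta+\widetilde\Delta)$ and the attendant Weitzenböck formula, the spectral bound $\max_\alpha w_\alpha\leq 2(\Min\{p,r\}+\Min\{q,r\})|\omega|^2$ extracted from the eigenvalue structure of $(\rho(E_\alpha))_d$, the closed-form evaluation of $\sum_\alpha w_\alpha$ via the standard-sphere Weitzenböck, and the Petersen–Wink weighted-sum lemma. The proof itself is then essentially one application of Stokes' theorem; the only step requiring care is the transition from $\omega$ parallel to $\omega\equiv 0$ in the strict case, which uses connectedness to propagate pointwise positivity to a strict integral inequality.
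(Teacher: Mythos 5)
Your proposal is correct and follows exactly the route the paper intends: the paper dispatches this theorem in one line as "a consequence of the above proposition and the Weitzenböck formula," and your argument is precisely the unpacking of that — the integrated Bochner identity for $\Delta_L=\nabla^*\nabla+\frac{1}{2}\Ric_L$, the pointwise positivity of $\langle\Ric_L(\omega),\omega\rangle$ via the weights $w_\alpha$, the bound $S/M\geq k_g(\omega,r)$, and Lemma \ref{weighted-sum}. The only detail worth recording explicitly is that $k$-positivity for $k\leq\lfloor S/M\rfloor$ implies $\lfloor S/M\rfloor$-positivity (since $\lambda_k$ dominates the average of $\lambda_1,\dots,\lambda_k$), which is the step that lets the hypothesis $k\leq k_g(\omega,r)$ feed into the weighted-sum lemma.
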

Next, we specify the previous theorem to the case of symmetric double forms.

\begin{theorem}\label{cw=0}
Let $(M,g)$ be a closed and connected Riemannian manifold of dimension $n\geq 3$ and let $\omega$ be a harmonic $(p,p)$ symmetric double form on $M$, $ 1\leq p\leq n/2$,  satisfying the first Bianchi identity such that its first contraction satisfies $\cc \omega=0$. 
 Suppose the Riemann curvature tensor $R$ has purity rank $\leq 2r$ and $R$ is $k$-positive (resp. $k$-nonnegative) with
$1\leq k\leq \frac{p(n-p+1)}{2\Min\{p,r\}}$, then  $\omega =0$  (resp. $\omega$ must be parallel). 
\end{theorem}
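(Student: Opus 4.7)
The plan is to deduce Theorem \ref{cw=0} as a direct specialization of the preceding general theorem on $\Delta_L$-harmonic double forms. The strategy consists in computing the invariant $k_g(\omega,r)$ for our particular $\omega$ and showing that our hypotheses on it imply it is $\Delta_L$-harmonic, after which the previous theorem applies verbatim.

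First, since $\omega$ is a symmetric $(p,p)$ double form, the proposition characterizing $\Delta_L$-harmonicity for symmetric or skew-symmetric $(p,p)$ double forms gives that $\omega$ being harmonic is equivalent to being $\Delta_L$-harmonic. Next, since $\omega$ satisfies the first Bianchi identity, we have $\mathfrak{S}\omega=0$; and since $\omega$ is symmetric, the identity $\tilde{\mathfrak{S}}\omega=(\mathfrak{S}\omega^t)^t=(\mathfrak{S}\omega)^t$ forces $\tilde{\mathfrak{S}}\omega=0$ as well. Combining this with the hypothesis $\cc\omega=0$ and the specialization $p=q$, the definition
\[
k_g(\omega,r)=\frac{1}{2(\Min\{p,r\}+\Min\{q,r\})}\Bigl[p(n-p+1)+q(n-q+1)-2\frac{\|\cc\omega\|^2}{\|\omega\|^2}-\frac{\|\mathfrak{S}\omega\|^2+\|\tilde{\mathfrak{S}}\omega\|^2}{\|\omega\|^2}\Bigr]
\]
simplifies to
\[
k_g(\omega,r)=\frac{2p(n-p+1)}{4\Min\{p,r\}}=\frac{p(n-p+1)}{2\Min\{p,r\}}.
\]

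Consequently, the hypothesis $1\leq k\leq \frac{p(n-p+1)}{2\Min\{p,r\}}$ is exactly the condition $1\leq k\leq k_g(\omega,r)$ required by the preceding general theorem, and applying that theorem to the $\Delta_L$-harmonic double form $\omega$ yields $\omega\equiv 0$ in the $k$-positive case, respectively $\nabla\omega\equiv 0$ in the $k$-nonnegative case. The only step that requires a moment of care is verifying $\tilde{\mathfrak{S}}\omega=0$ from symmetry together with the first Bianchi identity; once this is in hand, the proof reduces to a bookkeeping exercise in simplifying $k_g(\omega,r)$, and there is no essential additional obstacle.
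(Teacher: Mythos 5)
Your proposal is correct and is exactly the specialization the paper intends: the paper states this theorem with the phrase ``we specify the previous theorem to the case of symmetric double forms'' and gives no further argument, and your proof supplies precisely the needed details (harmonic $\Leftrightarrow$ $\Delta_L$-harmonic for symmetric $(p,p)$ double forms on a closed manifold, $\tilde{\mathfrak{S}}\omega=(\mathfrak{S}\omega^t)^t=0$ by symmetry, and the simplification of $k_g(\omega,r)$ to $\frac{p(n-p+1)}{2\Min\{p,r\}}$ at every point where $\omega\neq 0$).
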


\begin{remark}
We remark that a $(p,p)$ double form such that $2p>n$ and satisfying $\cc \omega=0$ must be zero itself without the positivity and purity assumptions on $R$, see Proposition 2.1 in \cite{Labbi-Balkan}.
\end{remark}
\noindent
Next, we prove Theorem B stated in the introduction as a consequence of the above theorem.
\begin{thm-b}
Let $(M,g)$ be a closed and connected Riemannian manifold of dimension $n\geq 3$ and let $\omega$ be a harmonic $(p,p)$ symmetric double form on $M$ with $1\leq p\leq n/2$,  satisfying the first Bianchi identity. 
 Let $j$ be an integer such that $1\leq j\leq p$.
 If the Riemann curvature tensor $R$ is $\lfloor \frac{n-j+1}{2}\rfloor$-positive then  $\cc^{p-j}\omega =\lambda g^j$ for some constant $\lambda$. In particular,
\begin{itemize}
\item[a)] If $R$ is $\lfloor \frac{n-p+1}{2}\rfloor$-positive then  $\omega$ has constant sectional curvature.
\item[b)] If $R$ is $\lfloor \frac{n}{2}\rfloor$-positive then  $\cc^{p-1}\omega =\lambda g$ for some constant $\lambda$.
\end{itemize}
\end{thm-b}
\noindent
In the case where $p=2$ and $\omega=R$ is the Riemann tensor,  we recover the main result of Tachibana in \cite{Tachibana}. For $p=1$, we recover a classical result that a harmonic symmetric $(1,1)$ double form  on a Riemannian manifold with positive curvature operator (in fact
 ${\rm sec}>0$ is enough)  must be proportional to the metric tensor.
\begin{proof} 
Recall that \cite{Kulkarni, Labbi-double-forms} any $(p,p)$ double form $\omega$ has an orthogonal decomposition $\omega=\sum_{i=0}^pg^{i}\omega_{p-i}$, where $g^i$ are the exterior powers of the metric $(1,1)$ double form $g$ and the $\omega_k$ are   $(k,k)$ double forms such that $\cc \omega_k=0$. It turns out that if $\omega$ is symmetric and satisfies the first Bianchi identity then so are all the components $\omega_i$ for $1\leq i\leq p$, see \cite{Kulkarni}.  We apply the above theorem \ref{cw=0} to the different $\omega_k$ components  and get the vanishing of  $\omega_k =0$ for 
all $k$ within $1\leq k\leq j$. On the other hand,  the vanishing of the components $\omega_k$ for all $k$ such $1\leq k\leq j$ is equivalent to the condition $\cc^{p-j}\omega=\lambda g^j$, with $\lambda$   constant, see \cite[Proposition 2.2]{Labbi-Balkan}.
\end{proof}
\noindent
We claim that the following classes of Riemannian manifolds have their Gauss-Kronecker tensor $R^k$ harmonic and to which the above theorem applies.
\begin{enumerate}
\item $2k$-Thorpe manifolds \cite{Labbi-Balkan}, see also \cite{Kim, Labbi-JAUMS}. These are Riemannian manifolds of even dimension $n=2p\geq 4k$ that satisfy the self-duality condition
$\str\bigl(g^{p-2k}R^k\bigr)=g^{p-2k}R^k.$
These are generalizations of Einstein manifolds obtained for $k=1$
\item $2k$-Anti-Thorpe manifolds. These are Riemannian manifolds of even dimension $n=2p\geq 4k$ that satisfy the  condition
$\str\bigl(g^{p-2k}R^k\bigr)+g^{p-2k}R^k=\lambda g^p,$
where $\lambda$ is a constant. These are generalizations of conformally flat manifolds with constant scalar curvature obtained for $k=1$, see \cite[Theorem 5.8]{Labbi-double-forms}.
\end{enumerate}

\begin{proof}
First, we prove that the double form $g^{p-2k}R^k$ is harmonic under the first condition . It is clear that $D(g^{p-2k}R^k)=0$.  Property \ref{eq8} shows that
$$\delta(g^{p-2k}R^k)=\str D\str (g^{p-2k}R^k)=\str D (g^{p-2k}R^k)=0.$$
Therefore, using equation  (\ref{eq22}) we see that $0=\Delta(g^{p-2k}R^k)=\Delta_L(g^{p-2k}R^k)=g^{p-2k}\Delta_L(R^k)$. Proposition \ref{1-1} shows that $\Delta_L(R^k)=0$. The proof for the second class is completely similar.
\end{proof}

\begin{thm-c}
Let $(M,g)$ be a closed and connected Riemannian manifold of dimension $n\geq 3$. Let $\omega$ be a harmonic $(p,p)$ symmetric double form on $M$ with $2\leq p\leq n/2$,  satisfying the first and second Bianchi identities such that its first contraction satisfies $\cc \omega=\lambda g^{p-1}$ for some function $\lambda$ on $M$.
Suppose the Riemann curvature tensor $R$ has purity rank $\leq 2r$ and $R$ is $k$-positive (resp. $k$-nonnegative) with
$1\leq k\leq \frac{p(n-p+1)}{2\Min\{p,r\}}$, then the double form $\omega$ has constant sectional curvature (resp. $\omega$ must be parallel). 
\end{thm-c}

\begin{remark}
We remark that a $(p,p)$ double form such that $2p>n$ and satisfying $\cc \omega=\lambda g^{p-1}$ must be with constant sectional curvature without the positivity and purity assumptions on $R$, see Propositions 2.1 and 2.2 in \cite{Labbi-Balkan}.
\end{remark}
\begin{proof}
According to the above remark, the orthogonal decomposition of the double form $\omega$ takes the form
$$\omega=\omega_p+g^p\omega_0=\omega_p+\frac{\lambda}{p(n-p+1)}g^p.$$
Taking one contraction we easily get $\lambda=p(n-p)\omega_0.$
Next, since $D\omega=0$ then $\tilde{D}\omega=0$, and so Proposition \ref{Schur} shows that the function  $\lambda$ is constant over $M$. Therefore, one has
\[\delta\omega=-(\cc \tilde{D}+\tilde{D}\cc)\omega=0+\tilde{D}\cc\omega=\tilde{D}(\lambda g^{p-1})=0.\]
Consequently, $\omega$ and $\omega_p$ are both harmonic. In the case of strict $k$-positivity, the vanishing of $\omega_p$ follows at once after applying the above theorem to $\omega_p$ and hence $\omega$ has constant sectional curvature. In the case of $k$-non-negativity, the above theorem shows that $\langle \Weit(\omega_p),\omega_p\rangle\geq 0.$
Consequently, using  Corollary \ref{Weitz-gp} we obtain
$$\langle \Weit(\omega),\omega\rangle=\langle \Weit(\omega_p),\omega_p\rangle\geq 0.$$
Since $\omega$ is harmonic, the Weitzenb\"ock formula implies that $\omega$ must be parallel.
\end{proof}

Recall that a Riemannian manifold is said to be hyper $2k$-Einstein if its Riemann curvature tensor seen as a $(2,2)$ double form  satisfies $\cc R^k= \lambda g^{2k-1}$, see \cite{Labbi-Balkan}. The above theorem shows that 

\begin{cor-c}
Let $(M,g)$ be a closed and connected hyper $2k$-Einstein Riemannian $n$-manifold with $2\leq 2k\leq n/2$. Suppose the Riemann curvature tensor $R$ has purity rank $\leq 2r$ and let  $p$ be any integer such that $1\leq p\leq \frac{k(n-2k+1)}{\Min\{2k,r\}}$. 
If $R$ is $p$-positive (resp. $p$-nonnegative) then $M$ has constant $(2k)$-th Thorpe sectional curvature, that is $R^k={\mathrm constant}.g^{2k}$ (resp. $R^k$ must be parallel). 
\end{cor-c}
We remark that $\frac{n-2k+1}{2}\leq \frac{k(n-2k+1)}{\Min\{2k,r\}}$ for any value of $r$, therefore we have the same conclusion as in the above corollary under the assumption that the sum of the lowest $\lfloor \frac{n-2k+1}{2}\rfloor$ eigenvalues of the curvature operator $R$ is positive  (resp. nonnegative).
For $k=1$, we recover  a recent result of Petersen and Wink \cite[Theorem B]{Pet-Wink}, see also \cite{Bet-Good}.\\

Let $2\leq 2k\leq n$ and let $ R^k= \sum_{i=0}^n g^{2k-i}\omega_i$ be the orthogonal decomposition of the $R^k$. Since $DR^k=0$, it results from proposition \ref{proposition5.3} that if $\delta\omega_{2k}=0$ then $\omega_{2k}$ is harmonic. The above theorem then tell us the following
\begin{corollary}
Let $2\leq 2k\leq n/2$ and let $(M,g)$ be a closed and connected Riemannian $n$-manifold. 
Suppose the Riemann curvature tensor $R$ has purity rank $\leq 2r$ and let  $p$ be any integer such that $1\leq p\leq \frac{k(n-2k+1)}{\Min\{2k,r\}}$. 
Let $R^k$ be the $k$-th exterior power of the Riemann tensor $R$ and $ R^k= \sum_{i=0}^n g^{2k-i}\omega_i$ be its orthogonal decomposition. If the top component   $\omega_{2k}$ is free of divergence and $R$ is $p$-positive (resp. $p$-nonnegative) then $(M,g)$ is $k$-conformally flat (resp. $\omega_{2k}$ is parallel.)
\end{corollary}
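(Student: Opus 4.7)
The plan is to reduce the corollary to Theorem \ref{cw=0} applied to the top trace-free component $\omega_{2k}$ of $R^k$, with the role of the ``$p$'' appearing there played by $2k$. The key point is that the numerical bound $1\leq p\leq \frac{k(n-2k+1)}{\Min\{2k,r\}}$ in the statement of the corollary is precisely the bound of Theorem \ref{cw=0} with the substitution $p\mapsto 2k$:
\[
1\leq p\leq \frac{2k(n-2k+1)}{2\Min\{2k,r\}}=\frac{k(n-2k+1)}{\Min\{2k,r\}}.
\]
Hence all the work will consist in checking that $\omega_{2k}$ fulfills the algebraic and analytic hypotheses required by Theorem \ref{cw=0}.

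For the algebraic side, I would first observe that $R$, being a symmetric $(2,2)$ double form satisfying the first Bianchi identity, has $k$-th exterior power $R^k$ that is again symmetric, of bidegree $(2k,2k)$, and satisfies the first Bianchi identity. The orthogonal decomposition recalled in Section \ref{ortho-decomp} then forces each component $\omega_i$ in $R^k=\sum_{i=0}^n g^{2k-i}\omega_i$ to be symmetric and to satisfy the first Bianchi identity, and by construction $\cc\omega_{2k}=0$. Moreover the range hypothesis $2\leq 2k\leq n/2$ ensures that the dimensional constraint $1\leq 2k\leq n/2$ of Theorem \ref{cw=0} is satisfied.

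For the analytic side, I would establish harmonicity of $\omega_{2k}$ by invoking part (3) of Proposition \ref{proposition5.3}. The second Bianchi identity gives $DR=0$, and applying the graded Leibniz rule (\ref{eq2}) iteratively yields $DR^k=0$. Together with the hypothesis $\delta\omega_{2k}=0$ built into the statement, these are precisely the two inputs required by Proposition \ref{proposition5.3}(3), so $\omega_{2k}$ is harmonic.

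Now Theorem \ref{cw=0} applies directly to $\omega_{2k}$: under $p$-positivity of $R$ it gives $\omega_{2k}=0$, which is exactly the vanishing of the top trace-free component of $R^k$ and hence the definition of $k$-conformal flatness; under the weaker $p$-non-negativity assumption it gives $\nabla\omega_{2k}=0$. The only step I would expect to require a little care is the matching of the two numerical bounds and the Leibniz computation $DR^k=0$ (since $D$ acts by $(-1)^p$-derivation, one should note that the sign does not affect the vanishing); once these routine verifications are done, no further analysis is needed.
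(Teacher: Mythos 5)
Your proposal is correct and follows essentially the same route as the paper: the paper likewise obtains $DR^k=0$ from the second Bianchi identity, invokes Proposition \ref{proposition5.3}(3) together with the hypothesis $\delta\omega_{2k}=0$ to conclude that $\omega_{2k}$ is harmonic, and then applies Theorem \ref{cw=0} to the trace-free, symmetric, first-Bianchi component $\omega_{2k}$ with the substitution $p\mapsto 2k$, which produces exactly the stated bound $\frac{k(n-2k+1)}{\Min\{2k,r\}}$. Your additional verifications (symmetry and first Bianchi identity of the components, the sign in the Leibniz rule) are correct and merely make explicit what the paper leaves implicit.
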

Recall that $k$-conformally flat  means $\omega_{2k}=0$, see \cite{Nasu}. The above corollary generalizes Corollary 3.3 of \cite{Pet-Wink} obtained for $k=1$. We remark that in case $2k>n/2$ then $\omega_{2k}=0$ without any further assumptions.

\end{document}